\newtheorem{theorem}{Theorem}[section]
\newtheorem{lemma}[theorem]{Lemma}
\newtheorem{corollary}[theorem]{Corollary}
\newtheorem{proposition}[theorem]{Proposition}
\theoremstyle{definition}
\newtheorem{definition}[theorem]{Definition}
\newtheorem{remark}[theorem]{Remark}
\newtheorem{condition*}[theorem]{Condition}
\newtheorem*{acknowledgement}{Acknowledgements}       
\numberwithin{equation}{section}
\begin{document}

\title[A non-geodesic analogue of Reshetnyak's majorization theorem]{A non-geodesic analogue of \\Reshetnyak's majorization theorem}
\author[T. Toyoda]{Tetsu Toyoda}
\thanks{This work is supported in part by JSPS KAKENHI Grant Number JP21K03254}
\email[Tetsu Toyoda]{toyoda@cc.kogakuin.ac.jp}
\address[Tetsu Toyoda]
{\endgraf Kogakuin University, \endgraf 2665-1, Nakano, Hachioji, Tokyo, 192-0015 Japan}

\keywords{Reshetnyak's majorization theorem, $\mathrm{Cycl}_n (\kappa )$ space, $\mathrm{CAT}(\kappa )$ space, $\mathrm{Wir}_n$ space, 
the $\boxtimes$-inequalities, the weighted quadruple inequalities}
\subjclass[2010]{Primary 53C23; Secondary 51F99}

\begin{abstract}
For any real number $\kappa$ and any integer $n\geq 4$, the $\mathrm{Cycl}_n (\kappa )$ condition introduced by Gromov (2001) 
is a necessary condition for a metric space to 
admit an isometric embedding into a $\mathrm{CAT}(\kappa )$ space. 
It is known that for geodesic metric spaces, the $\mathrm{Cycl}_4 (\kappa )$ condition is equivalent to being $\mathrm{CAT}(\kappa )$. 
In this paper, we prove an analogue of Reshetnyak's majorization theorem 
for (possibly non-geodesic) metric spaces that satisfy the $\mathrm{Cycl}_4 (\kappa )$ condition. 
It follows from our result that for general metric spaces, the $\mathrm{Cycl}_4 (\kappa )$ condition 
implies the $\mathrm{Cycl}_n (\kappa )$ conditions for all integers $n\geq 5$, 
although Gromov stated that this implication is apparently not true. 
\end{abstract}

\maketitle

\section{Introduction}\label{intro-sec}
To find a characterization of those metric spaces that admit an isometric embedding into a $\mathrm{CAT}(\kappa )$ space 
is a longstanding open problem posed by Gromov 
(see \cite[Section 1.4]{ANN}, \cite[Section 1.19+]{Gr1} and \cite[\S15]{Gr2}). 
On the other hand, it is known that 
various conditions for a general metric space become equivalent to being $\mathrm{CAT}(\kappa )$ 
under the assumption that the metric space is geodesic. 
The $\mathrm{Cycl}_4 (\kappa )$ condition defined by Gromov \cite{Gr2} is one of such conditions (see \cite[\S 15]{Gr2}). 
In this paper, we prove an analogue of Reshetnyak's majorization theorem (see \cite{R}) 
for (possibly non-geodesic) metric spaces that satisfy the $\mathrm{Cycl}_4 (\kappa )$ condition. 
Our result shows that metric spaces with the $\mathrm{Cycl}_4 (\kappa )$ condition have more 
properties in common with $\mathrm{CAT}(\kappa )$ spaces than expected. 
In particular, it follows from our result that every metric space with the $\mathrm{Cycl}_4 (\kappa )$ condition 
satisfies the $\mathrm{Cycl}_n (\kappa )$ conditions for all integers $n\geq 5$, 
although Gromov stated that this is apparently not true (see Subsection \ref{gromov-remark-subsec}).

For a real number $\kappa$, we denote by $M_{\kappa}^{2}$ the complete, simply-connected, two-dimensional Riemannian manifold of 
constant Gaussian curvature $\kappa$, and by $d_{\kappa}$ the distance function on $M_{\kappa}^2$. 
Let $D_{\kappa}$ be the diameter of $M_{\kappa}^2$. Thus we have 
\begin{equation*}
D_{\kappa}
=
\begin{cases}
\frac{\pi}{\sqrt{\kappa}}\quad &\textrm{if }\kappa >0,\\
\infty\quad &\textrm{if }\kappa\leq 0.
\end{cases}
\end{equation*}
For a positive integer $n$ and an integer $m$, we denote by $\lbrack m\rbrack_n$ the element of 
$\mathbb{Z}/n\mathbb{Z}$ represented by $m$. 
We recall the definition of the $\mathrm{Cycl}_n (\kappa )$ conditions introduced in \cite{Gr2}. 

\begin{definition}\label{Cycl-def}
Fix $\kappa\in\mathbb{R}$ and an integer $n \ge 4$. 
Let $(X,d_X )$ be a metric space. 
We say that $X$ is 
a {\em $\mathrm{Cycl}_n (\kappa )$ space} or that 
$X$ satisfies the {\em $\mathrm{Cycl}_n (\kappa )$ condition} 
if for any map $f:\mathbb Z / n\mathbb Z\to X$ with 
\begin{equation}\label{Cycl-def-diameter-restriction-ineq}
\sum_{i\in\mathbb{Z}/n\mathbb{Z}}d_X \left(f(i),f(i+\lbrack 1\rbrack_n )\right) <2 D_{\kappa},
\end{equation}
there exists a map $g:\mathbb Z / n\mathbb Z\to M_{\kappa}^2$ such that 
\begin{equation*}
d_{\kappa}(g(i),g(i+\lbrack 1\rbrack_n ))\leq d_X (f(i),f(i+\lbrack 1\rbrack_n )),\quad
d_{\kappa}(g(i),g(j))\geq d_X (f(i),f(j))
\end{equation*}
for any $i,j\in\mathbb{Z}/n\mathbb{Z}$ with $j\neq i+\lbrack 1\rbrack_n$ and $i\neq j+\lbrack1\rbrack_n$. 
\end{definition}

\begin{remark}
The original definition of $\mathrm{Cycl}_n (\kappa )$ spaces  
in \cite[\S 7]{Gr2} requires the existence of a map 
$g':\mathbb Z / n\mathbb Z\to M_{\kappa '}^2$ for some $\kappa' \leq\kappa$ 
such that 
\begin{equation*}
d_{\kappa'}(g'(i),g'(i+\lbrack 1\rbrack_n ))\leq d_X (f(i),f(i+\lbrack 1\rbrack_n )),\quad
d_{\kappa'}(g'(i),g'(j))\geq d_X (f(i),f(j))
\end{equation*}
for any $i,j\in\mathbb{Z}/n\mathbb{Z}$ with $j\neq i+\lbrack 1\rbrack_n$ and $i\neq j+\lbrack1\rbrack_n$, 
instead of the existence of a map $g$ as in Definition \ref{Cycl-def}. 
As was mentioned in \cite{Gr2}, this definition is equivalent to Definition \ref{Cycl-def}. 
In fact, the existence of such a map $g'$ implies 
the existence of a map $g$ as in Definition \ref{Cycl-def} by Reshetnyak's majorization theorem. 
\end{remark}

\begin{remark}
In  \cite[\S 7]{Gr2}, the assumption \eqref{Cycl-def-diameter-restriction-ineq} was not stated explicitly. 
It was just remarked  that we have to consider only maps $f: \mathbb{Z}/n\mathbb{Z}\to X$ 
with``small" images $f(\mathbb{Z}/n\mathbb{Z})$ when $\kappa >0$. 
\end{remark}

Concerning $\mathrm{Cycl}_n (\kappa )$ spaces and $\mathrm{CAT}(\kappa )$ spaces, 
Gromov \cite{Gr2} established the following fact. 
For the definition of $\mathrm{CAT}(\kappa )$ spaces, see Definition \ref{CAT-def} in this paper. 

\begin{theorem}[Gromov \cite{Gr2}]\label{Cycl-facts-th}
Fix $\kappa\in\mathbb{R}$. The following two assertions hold true. 
\begin{enumerate}
\item[$(1)$]
A metric space $(X,d_X )$ is $\mathrm{CAT}(\kappa )$ if and only if $X$ is $\mathrm{Cycl}_4 (\kappa )$ and $D_{\kappa}$-geodesic. 
Here, we say $X$ is $D_{\kappa}$-geodesic if any $x,y\in X$ with $d_X (x,y)<D_{\kappa}$ can be joined by a geodesic segment in $X$. 
\item[$(2)$]
Every $\mathrm{CAT}(\kappa )$ space is $\mathrm{Cycl}_n (\kappa )$ for all integers $n\geq 4$. 
\end{enumerate}
\end{theorem}

On the other hand, 
the $\mathrm{Cycl}_4 (\kappa )$ condition generally does not imply the isometric embeddability into a 
$\mathrm{CAT}(\kappa )$ space without assuming the metric space is $D_{\kappa}$-geodesic. 
In fact, Nina Lebedava constructed a $6$-point $\mathrm{Cycl}_4 (0)$ space 
that does not admit an isometric embedding into any $\mathrm{CAT}(0)$ space (see \cite[\S 7.2]{AKP}). 
Moreover, it follows from the result of Eskenazis, Mendel and Naor \cite{EMN} that 
there also exists a $\mathrm{Cycl}_4 (0)$ space that does not admit a coarse embedding into any $\mathrm{CAT}(0)$ space (see \cite[p.116]{toyoda-five}).

The following theorem is our main result, which can be viewed as an analogue of Reshetnyak's majorization theorem for 
$\mathrm{Cycl}_{4}(\kappa )$ spaces.

\begin{theorem}\label{nongeodesic-majorization-th}
Let $\kappa\in\mathbb{R}$. 
If $X$ is a $\mathrm{Cycl}_4 (\kappa )$ space, 
then for any integer $n\geq 3$, and 
for any map $f:\mathbb{Z}/n\mathbb{Z}\to X$ that satisfies 
\begin{equation*}
\sum_{i\in\mathbb{Z}/n\mathbb{Z}}d_X \left( f(i),f(i+\lbrack 1\rbrack_n )\right)<2 D_{\kappa},\quad
f(j)\neq f(j+\lbrack 1\rbrack_n )
\end{equation*}
for every $j\in\mathbb{Z}/n\mathbb{Z}$, 
there exists a map $g:\mathbb{Z}/n\mathbb{Z}\to M_{\kappa}^2$ 
that satisfies the following two conditions: 
\begin{enumerate}
\item[$(1)$]
For any $i,j\in\mathbb{Z}/n\mathbb{Z}$, we have 
\begin{equation*}
d_{\kappa}(g(i),g(i+\lbrack 1\rbrack_n )) =d_X (f(i),f(i+\lbrack 1\rbrack_n )),\quad 
d_{\kappa}(g(i),g(j))\geq d_X (f(i),f(j)).
\end{equation*}
\item[$(2)$]
For any $i,j\in\mathbb{Z}/n\mathbb{Z}$ with $i\neq j$, we have 
$\lbrack g(i),g(j)\rbrack\cap\lbrack g(i-\lbrack 1\rbrack_n ),g(i+\lbrack 1\rbrack_n )\rbrack\neq\emptyset$, 
where we denote by $\lbrack a,b\rbrack$ the line segment in $M_{\kappa}^2$ with endpoints $a$ and $b$. 
\end{enumerate}
\end{theorem}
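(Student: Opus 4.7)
The plan is to prove the theorem by induction on $n \geq 3$.

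\textbf{Base cases.} For $n = 3$, take $g$ to be the comparison triangle in $M_\kappa^2$, whose existence follows from the triangle inequality in $X$ together with the perimeter bound $< 2D_\kappa$; condition (2) is immediate because for any $i \neq j$ the two segments $[g(i), g(j)]$ and $[g(i-[1]_3), g(i+[1]_3)]$ always share an endpoint. For $n = 4$, the $\mathrm{Cycl}_4(\kappa)$ property supplies a map $g\colon\mathbb{Z}/4\mathbb{Z} \to M_\kappa^2$ with sides $\leq$ and diagonals $\geq$ the corresponding $X$-distances. I would strengthen this to a convex quadrilateral with exact side lengths: if the majorizer is reflex, reflect the reflex vertex across the offending diagonal to convexify while only enlarging the other diagonal; then, within the one-parameter family of convex quadrilaterals in $M_\kappa^2$ constrained by the two diagonal lower bounds, continuously inflate each side in turn, using compactness to reach a configuration where all four side equalities hold simultaneously.

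\textbf{Inductive step.} For $n \geq 5$, pick any index $k$ and consider the reduced $(n-1)$-gon $f'$ obtained by deleting $f(k)$ and replacing the two adjacent edges by a single edge of $X$-length $d_X(f(k-[1]_n), f(k+[1]_n))$. By the triangle inequality in $X$ its perimeter is still less than $2D_\kappa$. The degenerate case $f(k-[1]_n) = f(k+[1]_n)$ is handled separately by a direct application of the $n = 4$ case; otherwise the inductive hypothesis yields a map $g'\colon\mathbb{Z}/(n-1)\mathbb{Z} \to M_\kappa^2$ satisfying (1) and (2) for $f'$. Because $[g'(k-[1]_n), g'(k+[1]_n)]$ is an edge of that $(n-1)$-gon, condition (2) for $g'$ forces every other $g'$-vertex to lie on one side of the line through these two points. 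I then insert $g(k)$ on the opposite side, chosen so that $d_\kappa(g(k), g'(k \pm [1]_n)) = d_X(f(k), f(k \pm [1]_n))$ (possible by the planar triangle inequality and the perimeter bound), and set $g(i) = g'(i)$ for $i \neq k$.

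\textbf{Verification and main obstacle.} The work is to show $d_\kappa(g(k), g'(j)) \geq d_X(f(k), f(j))$ for each $j \notin \{k-[1]_n, k, k+[1]_n\}$, along with condition (2) for the $n$-gon at the three affected vertices. For a fixed such $j$, apply the upgraded $\mathrm{Cycl}_4(\kappa)$ to $(f(k-[1]_n), f(k), f(k+[1]_n), f(j))$ to obtain an auxiliary convex planar quadrilateral $(h_0, h_1, h_2, h_3)$ with exact sides $d_X(f(k-[1]_n), f(k)), d_X(f(k), f(k+[1]_n)), d_X(f(k+[1]_n), f(j)), d_X(f(j), f(k-[1]_n))$ and diagonals $d_\kappa(h_0, h_2) \geq d_X(f(k-[1]_n), f(k+[1]_n))$, $d_\kappa(h_1, h_3) \geq d_X(f(k), f(j))$. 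The inductively built quadrilateral $(g'(k-[1]_n), g(k), g'(k+[1]_n), g'(j))$, convex by the choice of side for $g(k)$, shares its first two sides exactly with the auxiliary one, has the other two sides not smaller (by condition (1) for $g'$), and has diagonal $d_\kappa(g'(k-[1]_n), g'(k+[1]_n)) = d_X(f(k-[1]_n), f(k+[1]_n)) \leq d_\kappa(h_0, h_2)$. An Alexandrov-type monotonicity in $M_\kappa^2$ — for a convex planar quadrilateral with two fixed adjacent sides, enlarging the opposite two sides or shrinking one diagonal only enlarges the other diagonal — then gives $d_\kappa(g(k), g'(j)) \geq d_\kappa(h_1, h_3) \geq d_X(f(k), f(j))$. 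Establishing this planar monotonicity (especially for $\kappa > 0$) together with the $n=4$ upgrade constitute the main technical hurdles. Condition (2) at $k$ follows from the far-side placement of $g(k)$, and at $k \pm [1]_n$ from combining condition (2) for $g'$ with a short local planar argument inside the convex quadrilateral $(g'(k-[1]_n), g(k), g'(k+[1]_n), g'(j))$.
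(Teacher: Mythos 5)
Your proposal follows the same skeleton as the paper's proof (induction on $n$, delete a vertex, re-insert its comparison point on the far side of the line through its two neighbours, then verify the new diagonals by a quadruple comparison plus Alexandrov-type monotonicity in $M_\kappa^2$ — the paper's Lemmas \ref{larger-larger-lemma} and \ref{larger-smaller-lemma} are exactly the monotonicity statements you defer). But there is a genuine gap at the decisive step: your assertion that the quadrilateral $(g'(k-\lbrack 1\rbrack_n ), g(k), g'(k+\lbrack 1\rbrack_n ), g'(j))$ is ``convex by the choice of side for $g(k)$.'' Placing $g(k)$ on the opposite side of $\ell (g'(k-\lbrack 1\rbrack_n ),g'(k+\lbrack 1\rbrack_n ))$ only guarantees that $\lbrack g(k),g'(j)\rbrack$ meets that \emph{line}; it need not meet the \emph{segment} $\lbrack g'(k-\lbrack 1\rbrack_n ),g'(k+\lbrack 1\rbrack_n )\rbrack$. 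Whenever the attached triangle creates a reflex angle at $g'(k-\lbrack 1\rbrack_n )$ or $g'(k+\lbrack 1\rbrack_n )$ (angle sum there exceeding $\pi$), the diagonals do not cross, the hypothesis of your monotonicity lemma fails, the needed inequality $d_{\kappa}(g(k),g'(j))\geq d_X (f(k),f(j))$ can fail, and condition $(2)$ fails outright for the map you have built. This reflex case cannot be ruled out by a clever choice of $k$, and it is precisely where the paper does its real work (\textsc{Case 2} of Theorem \ref{strongly-Cycl-th}): it glues $\mathrm{conv}(g_0 (\mathbb{Z}/n\mathbb{Z}))$ to the new triangle along the shared edge, gets a $\mathrm{CAT}(\kappa )$ space by Reshetnyak's gluing theorem, re-runs the induction \emph{inside that glued space} — so the entire comparison polygon is rebuilt, with the broken path through the reflex vertex becoming a geodesic — and verifies distances via the strengthened quadruple statements of Lemma \ref{quadruple-p-lemma} and Corollaries \ref{original-def-coro}--\ref{simplicial-coro} ($d_X (y,w)\leq d_{\kappa}(y',p)+d_{\kappa}(p,w')$ for every $p\in\lbrack x',z'\rbrack$). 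Because all vertices move in this repair, no ``short local planar argument'' of the kind you invoke can substitute for it.

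Two secondary points. First, your $n=4$ ``upgrade'' (reflect the reflex vertex, then inflate sides by compactness) is asserted rather than proved, and the reflection step suffers from the same segment-versus-line problem: after reflecting, the diagonals still need not cross, and crossing is exactly what the quadruple comparison requires. In the paper this upgrade is an \emph{output}, not an input: it is Proposition \ref{cycl4-friends-prop}, obtained after Lemma \ref{key-lemma}, whose proof needs the generalized Alexandrov lemmas of Section \ref{alexandrov-sec} (delicate already for $\kappa >0$). Second, the degenerate case $f(k-\lbrack 1\rbrack_n )=f(k+\lbrack 1\rbrack_n )$ is not ``a direct application of the $n=4$ case'': you must insert a spike and verify $d_{\kappa}(g(k),g'(j))\geq d_X (f(k),f(j))$ for all $j$ simultaneously, and the triangle inequality only bounds $d_X (f(k),f(j))$ from above by quantities that do not control $d_{\kappa}(g(k),g'(j))$ from below; the paper again resorts to gluing (a segment attached at a single point of the comparison polygon) and re-induction.
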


Note that 
when the polygon with vertices $g(i)$, $i\in\mathbb{Z}/n\mathbb{Z}$ is non-degenerate, 
the condition $(2)$ in Theorem \ref{nongeodesic-majorization-th} 
means that this polygon is convex.

\subsection{Gromov's remark about the $\mathrm{Cycl}_n (\kappa )$ conditions}\label{gromov-remark-subsec}
Theorem \ref{Cycl-facts-th} tells us that 
the $\mathrm{Cycl}_4 (\kappa )$ condition implies the $\mathrm{Cycl}_n (\kappa )$ conditions for all integers $n\geq 5$ 
under the assumption that the metric space is $D_{\kappa}$-geodesic. 
In the study of upper curvature bound for general metric spaces, 
it is natural to ask whether this implication is true without assuming that the metric space is $D_{\kappa}$-geodesic. 
Concerning this question, Gromov \cite[\S 15, Remarks.(b)]{Gr2} stated 
``We shall see later on that $\mathrm{Cycl}_4 \Rightarrow \mathrm{Cycl}_k$ for all $k\geq 5$ in the geodesic case 
but this is apparently not so in general." 
However, we prove as a direct consequence of Theorem \ref{nongeodesic-majorization-th} that 
this implication actually holds true without assuming that the metric space is $D_{\kappa}$-geodesic:

\begin{theorem}\label{Cycl-th}
Let $\kappa\in\mathbb{R}$. 
Every $\mathrm{Cycl}_4 (\kappa )$ space 
is $\mathrm{Cycl}_n (\kappa )$ for all integers $n\geq 5$. 
\end{theorem}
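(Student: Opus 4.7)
The plan is to reduce Theorem \ref{Cycl-th} to Theorem \ref{nongeodesic-majorization-th} by collapsing consecutive coincident vertices. Fix $n\ge 4$ and a map $f:\mathbb{Z}/n\mathbb{Z}\to X$ with $\sum_{i} d_X(f(i),f(i+[1]_n))<2D_\kappa$. First I would partition $\mathbb{Z}/n\mathbb{Z}$ into maximal cyclic arcs on which $f$ is constant; if there are $n'$ such arcs, let $\pi:\mathbb{Z}/n\mathbb{Z}\to\mathbb{Z}/n'\mathbb{Z}$ be the cyclic quotient map and $\tilde{f}:\mathbb{Z}/n'\mathbb{Z}\to X$ the induced map. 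By construction consecutive values of $\tilde{f}$ are distinct, and since the collapsed edges have zero length the perimeter of $\tilde{f}$ equals that of $f$ and is therefore still less than $2D_\kappa$.

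When $n'\ge 3$, I apply Theorem \ref{nongeodesic-majorization-th} to $\tilde{f}$ to produce $\tilde{g}:\mathbb{Z}/n'\mathbb{Z}\to M_\kappa^2$ satisfying its conditions (1) and (2). When $n'\le 2$ the claim is trivial: for $n'=1$ pick $\tilde{g}$ to be any constant map; for $n'=2$ the two values $p,q$ of $\tilde{f}$ satisfy $d_X(p,q)<D_\kappa$, so a pair of points in $M_\kappa^2$ at distance $d_X(p,q)$ exists, and there are no non-edge pairs to check. In all cases set $g:=\tilde{g}\circ\pi:\mathbb{Z}/n\mathbb{Z}\to M_\kappa^2$.

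To verify that $g$ witnesses the $\mathrm{Cycl}_n(\kappa)$ condition for $f$, I would distinguish three cases for each pair $i,j\in\mathbb{Z}/n\mathbb{Z}$ according to how $\pi(i)$ and $\pi(j)$ sit in $\mathbb{Z}/n'\mathbb{Z}$. If $\pi(i)=\pi(j)$, both $d_\kappa(g(i),g(j))$ and $d_X(f(i),f(j))$ vanish, because $f$ is constant on the arc between them. If $\pi(i)$ and $\pi(j)$ are non-adjacent in $\mathbb{Z}/n'\mathbb{Z}$, Theorem \ref{nongeodesic-majorization-th}(1) applied to $\tilde{f},\tilde{g}$ directly gives the required $d_\kappa(g(i),g(j))\ge d_X(f(i),f(j))$. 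If $\pi(i)$ and $\pi(j)$ are adjacent in $\mathbb{Z}/n'\mathbb{Z}$, the edge-equality clause of Theorem \ref{nongeodesic-majorization-th}(1) yields the equality $d_\kappa(g(i),g(j))=d_X(f(i),f(j))$, which suffices for both the edge inequality (when $i,j$ are adjacent in $\mathbb{Z}/n\mathbb{Z}$) and the expansion inequality (when they are not).

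The only genuinely delicate point is the last case above: pairs that are non-adjacent in $\mathbb{Z}/n\mathbb{Z}$ can become adjacent in the reduced cycle, and one really needs the equality (rather than merely an upper bound) on edges supplied by Theorem \ref{nongeodesic-majorization-th}(1) in order to secure the required expansion across such a ``shortened'' diagonal. Everything else is bookkeeping, and no geometric input beyond Theorem \ref{nongeodesic-majorization-th} is required.
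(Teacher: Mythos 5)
Your proposal is correct and is essentially the paper's own argument: the paper deduces Theorem \ref{Cycl-th} from Theorem \ref{nongeodesic-majorization-th} via Lemma \ref{strongly-Cycl-relation-lemma}, whose proof is precisely your construction of collapsing the maximal arcs on which $f$ is constant, applying the majorization statement to the reduced cycle, and pulling the comparison map back along the quotient (with the degenerate cases $n'\leq 3$ handled trivially, as you do). You also correctly identify the one delicate point, which is exactly why the paper builds the edge \emph{equalities} into the $\mathrm{Cycl}'_n(\kappa )$ conditions: diagonals of the original cycle that become edges of the reduced cycle still need the expansion inequality, and the equality on edges supplies it.
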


\subsection{Gromov's question about the Wirtinger inequalities}
In \cite[\S 6]{Gr2}, Gromov also intoduced the following 
conditions for metric spaces. 

\begin{definition}
Fix an integer $n\geq 4$. 
We say that a metric space $(X,d_{X})$ is a {\em $\mathrm{Wir}_n$ space}  
if any map $f: \mathbb Z / n\mathbb Z \to X$ satisfies 
\begin{equation}\label{W_nj}
0\leq
\sin^2 \frac{j \pi}{n}\sum_{i \in \mathbb Z / n\mathbb Z} d_X (f(i), f(i+\lbrack 1\rbrack_n ))^2
-\sin^2 \frac{\pi}{n}\sum_{i \in \mathbb Z / n\mathbb Z} d_X (f(i), f(i+\lbrack j\rbrack_n ))^2
\end{equation}
for every $j\in\mathbb{Z}\cap\lbrack 2,n-2\rbrack$. 
\end{definition}

The inequalities \eqref{W_nj} can be thought of as a discrete and nonlinear analogue of 
classical Wirtinger's inequality for functions on $S^1$. 
Every Euclidean space is $\mathrm{Wir}_n$ for all integers $n\geq 4$, 
which was first proved by Pech \cite{Pe} 
before Gromov introduced the notion of $\mathrm{Wir}_n$ spaces. 
Therefore, it follows from the definition of $\mathrm{Cycl}_n (0)$ spaces that every 
$\mathrm{Cycl}_n (0)$ space is $\mathrm{Wir}_n$ for each integer $n\geq 4$. 
Thus, for general metric spaces,  
the following implications 
are true for each integer $n\geq 4$: 
\begin{equation*}
\mathrm{CAT}(0)
\Longrightarrow
\mathrm{Cycl}_n (0)
\Longrightarrow
\mathrm{Wir}_n .
\end{equation*}
In \cite[p.133, \S 25, Question]{Gr2}, Gromov posed the question of 
whether the implication $\mathrm{Cycl}_4 (0)\Rightarrow\mathrm{Wir}_n$ holds true for every integer $n\geq 5$ 
without assuming that the metric space is geodesic. 
Kondo, Toyoda and Uehara \cite{KTU} answered this question affirmatively. 

\begin{theorem}[\cite{KTU}]\label{KTU-th}
Every $\mathrm{Cycl}_4 (0)$ space 
is $\mathrm{Wir}_n$ for all integers $n\geq 4$. 
\end{theorem}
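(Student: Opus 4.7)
The plan is to deduce Theorem \ref{KTU-th} as an immediate consequence of Theorem \ref{Cycl-th} chained with the second implication in \eqref{CAT-Cycl-Wir-implications}. The latter implication, $\mathrm{Cycl}_n(0)\Rightarrow\mathrm{Wir}_n$, is essentially a formal consequence of the fact (due to Petrunin, cf.\ \cite{Pe} and \cite[\S 6]{Gr2}) that the Euclidean plane itself satisfies the $\mathrm{Wir}_n$ inequalities, combined with the comparison inequalities built into the definition of $\mathrm{Cycl}_n(0)$. So the work is really bookkeeping.

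First I would reduce to showing the latter implication in \eqref{CAT-Cycl-Wir-implications}. Given a $\mathrm{Cycl}_4(0)$ space $X$, Theorem \ref{Cycl-th} gives that $X$ is $\mathrm{Cycl}_n(0)$ for every $n\geq 4$. Now fix such an $n$, an integer $j\in\mathbb{Z}\cap[2,n-2]$, and any map $f:\mathbb{Z}/n\mathbb{Z}\to X$. Because $\kappa=0$ forces $D_\kappa=\infty$, the perimeter condition \eqref{Cycl-def-diameter-restriction-ineq} is automatic, so the $\mathrm{Cycl}_n(0)$ property produces a map $g:\mathbb{Z}/n\mathbb{Z}\to M_0^2=\mathbb{R}^2$ with
\begin{equation*}
d_0(g(i),g(i+[1]_n))\leq d_X(f(i),f(i+[1]_n)),\qquad d_0(g(i),g(i+[j]_n))\geq d_X(f(i),f(i+[j]_n)),
\end{equation*}
for every $i\in\mathbb{Z}/n\mathbb{Z}$, where the second inequality uses $j\in[2,n-2]$ so that $i+[j]_n$ is not adjacent to $i$.

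Second, I would apply the classical $\mathrm{Wir}_n$ inequality in $\mathbb{R}^2$ to the map $g$:
\begin{equation*}
\sin^2\frac{\pi}{n}\sum_{i\in\mathbb{Z}/n\mathbb{Z}}d_0(g(i),g(i+[j]_n))^2\leq \sin^2\frac{j\pi}{n}\sum_{i\in\mathbb{Z}/n\mathbb{Z}}d_0(g(i),g(i+[1]_n))^2.
\end{equation*}
Substituting the two comparison inequalities above (lower-bounding the left side and upper-bounding the right side) yields
\begin{equation*}
\sin^2\frac{\pi}{n}\sum_{i}d_X(f(i),f(i+[j]_n))^2\leq \sin^2\frac{j\pi}{n}\sum_{i}d_X(f(i),f(i+[1]_n))^2,
\end{equation*}
which is exactly the $\mathrm{Wir}_n$ inequality \eqref{W_nj} for the map $f$.

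There is no real obstacle once Theorem \ref{Cycl-th} is granted; the argument is a one-step transfer via a Euclidean majorant, and each inequality of the comparison is used in exactly the direction that preserves the sign of the Wirtinger expression. The only point to be a little careful about is to record that the ``non-adjacency'' hypothesis in the $\mathrm{Cycl}_n(0)$ condition is exactly what the range $j\in[2,n-2]$ guarantees, and that the diameter hypothesis \eqref{Cycl-def-diameter-restriction-ineq} is vacuous for $\kappa=0$, so the argument applies to arbitrary maps $f$ as required by the definition of $\mathrm{Wir}_n$.
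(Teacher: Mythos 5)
Your proposal is correct and follows exactly the paper's own route: Theorem \ref{Cycl-th} upgrades $\mathrm{Cycl}_4(0)$ to $\mathrm{Cycl}_n(0)$, and the implication $\mathrm{Cycl}_n(0)\Rightarrow\mathrm{Wir}_n$ is obtained by transferring the Wirtinger inequality from $\mathbb{R}^2$ (valid there by \cite{Pe}, \cite[\S 6]{Gr2}) through the comparison map, with each inequality pointing the right way. Your added bookkeeping --- that $D_0=\infty$ makes \eqref{Cycl-def-diameter-restriction-ineq} vacuous and that $j\in\lbrack 2,n-2\rbrack$ guarantees non-adjacency of $i$ and $i+\lbrack j\rbrack_n$ --- is exactly what the paper leaves implicit when it says the result ``follows immediately.''
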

Since Theorem \ref{Cycl-th} implies the stronger implication 
$\mathrm{Cycl}_4 (0)\Rightarrow\mathrm{Cycl}_n (0)$ for every integer $n\geq 4$,  
Theorem \ref{Cycl-th} strengthens Theorem \ref{KTU-th} and gives another proof of it.

\subsection{The $\boxtimes$-inequalities}
It was remarked in \cite[\S 7]{Gr2} that the $\mathrm{Cycl}_4 (0)$ condition is equivalent to the validity of 
a certain family of inequalities, defined as follows. 

\begin{definition}
We say that a metric space $(X,d_X )$ satisfies 
the {\em $\boxtimes$-inequalities} if
for any $t, s \in\lbrack0,1\rbrack$ and any $x,y,z,w\in X$, we have 
\begin{multline*}
0
\leq
(1-t)(1-s) d_X (x,y)^2 +t(1-s) d_X (y,z)^2 +ts d_X (z,w)^2 \\
+(1-t)s d_X (w,x)^2 -t(1-t) d_X (x,z)^2 -s(1-s) d_X (y,w)^2.
\end{multline*}
\end{definition}
Gromov \cite{Gr2} and Sturm \cite{St} proved independently that every $\mathrm{CAT}(0)$ space satisfies the 
$\boxtimes$-inequalities. 
The name ``$\boxtimes$-inequalities" is based on a notation used by Gromov \cite{Gr2}, and was used in \cite{KTU} and \cite{toyoda-five}.  
Sturm \cite{St} called these inequalities the {\em weighted quadruple inequalities}. 
Gromov stated the following fact in \cite[\S 7]{Gr2}. 
\begin{theorem}[\cite{Gr2}]\label{Cycl4-boxtimes-th}
A metric space is 
$\mathrm{Cycl}_4 (0)$ if and only if it satisfies the $\boxtimes$-inequalities. 
\end{theorem}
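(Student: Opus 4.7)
\emph{Forward direction.} Given any $x,y,z,w\in X$ in a $\mathrm{Cycl}_4(0)$ space, apply Definition \ref{Cycl-def} (whose diameter restriction \eqref{Cycl-def-diameter-restriction-ineq} is vacuous since $D_0=\infty$) to the map $f:\mathbb{Z}/4\mathbb{Z}\to X$ given by $f(0)=x,f(1)=y,f(2)=z,f(3)=w$, obtaining a comparison map $g:\mathbb{Z}/4\mathbb{Z}\to M_0^2=\mathbb{R}^2$ whose four side distances are at most, and whose two diagonal distances are at least, the corresponding ones in $X$. Since $\mathbb{R}^2$ is $\mathrm{CAT}(0)$, it satisfies the $\boxtimes$-inequalities. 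In the $\boxtimes$-expression the four side-distance terms carry the non-negative coefficients $(1-t)(1-s),\,t(1-s),\,ts,\,(1-t)s$, while the two diagonal-distance terms carry the non-positive coefficients $-t(1-t),\,-s(1-s)$; replacing each $|g(i)-g(j)|$ by the corresponding $d_X(f(i),f(j))$ therefore only increases the expression, so the $\boxtimes$-inequality passes from the planar quadruple to $(x,y,z,w)$.

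\emph{Converse.} Suppose $X$ satisfies the $\boxtimes$-inequalities, and let $x,y,z,w\in X$ have cyclic side lengths $a,b,c,d$ and diagonals $p=d_X(x,z),\,q=d_X(y,w)$. By the triangle inequality in $X$ each of $a,b,c,d$ is bounded by the sum of the other three, so the family of planar quadrilaterals $P_0 P_1 P_2 P_3\subset\mathbb{R}^2$ with exactly these side lengths is non-empty; parameterize it by $P=|P_0-P_2|$ ranging over $I=\bigl[\max(|a-b|,|c-d|),\min(a+b,c+d)\bigr]$ together with a binary convex/crossed choice determining whether $P_1$ and $P_3$ lie on opposite or on the same side of the line through $P_0$ and $P_2$. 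Expanding the squared norm using the identity $|\sum_i \alpha_i P_i|^2=-\sum_{i<j}\alpha_i\alpha_j|P_i-P_j|^2$, valid whenever $\sum_i\alpha_i=0$, yields
\begin{equation*}
\bigl|(1-t)P_0+tP_2-(1-s)P_1-sP_3\bigr|^2 = F^{\mathrm{Euc}}(t,s),
\end{equation*}
where $F^{\mathrm{Euc}}(t,s)$ denotes the $\boxtimes$-expression for the planar configuration. Geometrically, this exhibits the planar $\boxtimes$-expression as the squared distance between two points sliding along the diagonals $P_0 P_2$ and $P_1 P_3$, and in particular makes $\boxtimes$ an identity (not merely an inequality) in $\mathbb{R}^2$.

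The remaining task is to exhibit $P^*\in I$, in the convex orientation (which maximizes the other diagonal $Q(P)$), such that $P^*\ge p$ and $Q(P^*)\ge q$. Evaluating the $\boxtimes$-inequalities for $(x,y,z,w)$ at the endpoints of $I$, where the planar quadrilateral degenerates into two triangles sharing a common side, yields quantitative bounds on $Q(P)$ at those extremes; an intermediate-value-theorem argument applied to the continuous map $P\mapsto (P,Q(P))$ then produces the desired $P^*$ inside the closed quadrant $\{(P,Q):P\ge p,\,Q\ge q\}$. The main obstacle is executing this continuity step rigorously---extracting from the full family $\{F^X(t,s)\ge 0\}_{t,s\in[0,1]}$ the precise inequalities that force the curve $(P,Q(P))$ to enter that quadrant, and accommodating boundary cases where $p$ lies outside $I$ by invoking the freedom in Definition \ref{Cycl-def} to shrink the planar side lengths, thereby enlarging $I$ as required.
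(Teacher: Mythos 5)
Your forward direction is correct and is essentially the paper's argument: the comparison quadruple from Definition \ref{Cycl-def} plus the sign pattern of the $\boxtimes$-coefficients, with nonnegativity of the planar expression coming from the identity $\bigl\|((1-t)x'+tz')-((1-s)y'+sw')\bigr\|^2=F^{\mathrm{Euc}}(t,s)$ (the paper writes out this identity instead of quoting $\mathrm{CAT}(0)\Rightarrow\boxtimes$, a cosmetic difference).

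The converse, however, contains a genuine gap, and you flag it yourself: the step you defer (``extracting from the family $F^X(t,s)\ge 0$ the precise inequalities that force the curve into the quadrant'') is the entire content of the implication. Two concrete points. First, the intermediate-value scaffolding is unnecessary and partly misdirected: the triangle inequality in $X$ already gives $|a-b|\le p\le a+b$ and $|c-d|\le p\le c+d$ (the latter pair via $w$), so $p\in I$ always and your boundary case ``$p$ outside $I$'' never occurs. Since the second diagonal is antitone in the first on the convex family, there is nothing to slide: one should simply take the convex quadrilateral with $|P_0-P_2|=p$ exactly, and the whole converse reduces to proving $Q(p)\ge q$. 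Second, that single inequality is precisely the paper's Lemma \ref{boxtimes-key-lemma} (the $0$-quadruple condition), and proving it needs two ideas, of which your proposal contains only the germ of one. When the diagonals cross at an interior point of $(y',w')$, write the crossing point as $(1-t)x'+tz'=(1-s)y'+sw'$ with $s\in(0,1)$; then $F^{\mathrm{Euc}}(t,s)=0$ by your identity, and comparing with $F^X(t,s)\ge 0$ \emph{at that same} $(t,s)$ — sides and the diagonal $p$ agreeing — leaves exactly $s(1-s)\bigl(Q^2-q^2\bigr)\ge 0$, i.e.\ $q\le Q$. You state the identity but never perform this evaluation. More seriously, when $p$ is an endpoint of $I$ the crossing degenerates (e.g.\ $p=a+b$ forces $y'\in[x',z']$, and $p=|a-b|$ pushes a vertex onto a side so that the crossing parameter $s$ leaves $(0,1)$ and the comparison above collapses to $0\ge 0$). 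In these cases one has the metric equality $d_X(x,z)=d_X(x,y)+d_X(y,z)$ (or a permutation of it), and one needs Sturm's inequality, Proposition \ref{sdi-CAT(0)-prop}, which is obtained from the $\boxtimes$-inequalities by dividing by $s$ and letting $s\to 0$; this limiting argument is a distinct idea that is entirely absent from your plan. Once these two steps are supplied, your direct construction (convex quadrilateral with diagonal $p$) does yield $\mathrm{Cycl}_4(0)$ without the paper's detour through Proposition \ref{cycl4-friends-prop} and the $\mathrm{Cycl}'_n$ machinery, so the architecture is salvageable — but as written, the proof of the hard direction has not been carried out.
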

For a proof of Theorem \ref{Cycl4-boxtimes-th}, 
see Section \ref{boxtimes-sec} of this paper. 
The following two corollaries follow from 
Theorem \ref{nongeodesic-majorization-th} and Theorem \ref{Cycl-th} immediately. 

\begin{corollary}\label{boxtimes-nongeodesic-majorization-coro}
If a metric space $X$ satisfies the $\boxtimes$-inequalities, 
then for any integer $n\geq 3$, and 
for any map $f:\mathbb{Z}/n\mathbb{Z}\to X$ that satisfies 
$f(j)\neq f(j+\lbrack 1\rbrack_n )$ for every $j\in\mathbb{Z}/n\mathbb{Z}$, 
there exists a map $g:\mathbb{Z}/n\mathbb{Z}\to\mathbb{R}^2$ 
that satisfies the following two conditions: 
\begin{enumerate}
\item[$(1)$]
For any $i,j\in\mathbb{Z}/n\mathbb{Z}$, we have 
\begin{equation*}
\| g(i)-g(i+\lbrack 1\rbrack_n )\| =d_X (f(i),f(i+\lbrack 1\rbrack_n )),\quad 
\| g(i)-g(j)\|\geq d_X (f(i),f(j)).
\end{equation*}
\item[$(2)$]
For any $i,j\in\mathbb{Z}/n\mathbb{Z}$ with $i\neq j$, we have 
$\lbrack g(i),g(j)\rbrack\cap\lbrack g(i-\lbrack 1\rbrack_n ),g(i+\lbrack 1\rbrack_n )\rbrack\neq\emptyset$, 
where we denote by $\lbrack a,b\rbrack$ the line segment in $\mathbb{R}^2$ with endpoints $a$ and $b$. 
\end{enumerate}
\end{corollary}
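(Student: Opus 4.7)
The plan is to derive the corollary directly by combining two earlier results: Theorem \ref{Cycl4-boxtimes-th} lets us pass from the $\boxtimes$-inequalities hypothesis to the $\mathrm{Cycl}_4(0)$ condition, and then Theorem \ref{nongeodesic-majorization-th} specialized to $\kappa = 0$ supplies the desired majorization in the Euclidean plane $\mathbb{R}^2 = M_0^2$. There is no substantive new content to generate, only a translation between equivalent formulations.

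First I would invoke Theorem \ref{Cycl4-boxtimes-th}: since $X$ satisfies the $\boxtimes$-inequalities, it is a $\mathrm{Cycl}_4 (0)$ space. This puts us in the setting of Theorem \ref{nongeodesic-majorization-th} with $\kappa = 0$.

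Next I would verify the remaining hypotheses of Theorem \ref{nongeodesic-majorization-th} for $\kappa = 0$. Because $D_0 = \infty$, the perimeter bound $\sum_{i} d_X (f(i), f(i+[1]_n)) < 2 D_\kappa$ is automatic, so the only substantive assumption that needs checking is $f(j) \neq f(j + [1]_n)$ for every $j \in \mathbb{Z}/n\mathbb{Z}$, which is assumed in the corollary. Applying Theorem \ref{nongeodesic-majorization-th} then produces a map $g : \mathbb{Z}/n\mathbb{Z} \to M_0^2$ satisfying its two conclusions.

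Finally I would identify $M_0^2$ with the Euclidean plane $\mathbb{R}^2$ in the standard way, so that $d_0(a,b)$ becomes $\|a-b\|$ and geodesic segments $[a,b] \subset M_0^2$ coincide with line segments in $\mathbb{R}^2$. Under this identification, conditions (1) and (2) of Theorem \ref{nongeodesic-majorization-th} become verbatim conditions (1) and (2) of the corollary. No genuine obstacle arises; the entire argument is a formal specialization of results already established, which is why the author describes both corollaries as immediate.
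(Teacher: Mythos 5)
Your proposal is exactly the paper's argument: the author derives this corollary immediately from Theorem \ref{Cycl4-boxtimes-th} (to pass from the $\boxtimes$-inequalities to $\mathrm{Cycl}_4(0)$) together with Theorem \ref{nongeodesic-majorization-th} at $\kappa=0$, where $D_0=\infty$ makes the perimeter hypothesis vacuous. Your identification of $M_0^2$ with $\mathbb{R}^2$ is the same routine translation the paper leaves implicit, so the proof is correct and coincides with the intended one.
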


\begin{corollary}\label{boxtimes-main-coro}
If a metric space satisfies the $\boxtimes$-inequalities, then 
it is $\mathrm{Cycl}_n (0)$ for every integer $n\geq 4$. 
\end{corollary}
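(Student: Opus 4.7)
The plan is a direct two-step chain of cited results, since the corollary is essentially the $\kappa=0$ case of Theorem~\ref{Cycl-th} repackaged through Theorem~\ref{Cycl4-boxtimes-th}. First I would note that by Theorem~\ref{Cycl4-boxtimes-th}, satisfying the $\boxtimes$-inequalities is logically equivalent to being a $\mathrm{Cycl}_4(0)$ space; in particular, the hypothesis of the corollary gives that $X$ is $\mathrm{Cycl}_4(0)$.

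Second, I would invoke Theorem~\ref{Cycl-th} with $\kappa=0$. That theorem asserts precisely that any $\mathrm{Cycl}_4(0)$ space is $\mathrm{Cycl}_n(0)$ for every integer $n\geq 4$. Chaining the two implications gives the conclusion.

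There is no real obstacle in this deduction: everything substantive is absorbed into Theorem~\ref{Cycl-th} (which rests on the non-geodesic majorization result, Theorem~\ref{nongeodesic-majorization-th}) and into Theorem~\ref{Cycl4-boxtimes-th} (proved separately in Section~\ref{boxtimes-sec}). The only thing to check is bookkeeping — that the implication direction of Theorem~\ref{Cycl4-boxtimes-th} used here (namely, $\boxtimes\Rightarrow\mathrm{Cycl}_4(0)$, rather than the reverse) is the one provided, which it is. Consequently the proof is essentially a single sentence appealing to these two results in sequence.
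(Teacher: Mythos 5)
Your proof is correct and is exactly the paper's own derivation: the paper states that Theorem~\ref{Cycl4-boxtimes-th} together with Theorem~\ref{Cycl-th} immediately yields this corollary, i.e.\ $\boxtimes$-inequalities $\Rightarrow$ $\mathrm{Cycl}_4(0)$ $\Rightarrow$ $\mathrm{Cycl}_n(0)$ for all $n\geq 4$ with $\kappa=0$. Your check that the needed implication direction of Theorem~\ref{Cycl4-boxtimes-th} is the one provided is the only point of care, and you handled it correctly.
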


\subsection{Graph comparison}

We can say that the $\mathrm{Cycl}_n (\kappa )$ condition is defined by 
comparing embeddings of the cycle graph with $n$ vertices into a given metric space 
with embeddings of the same graph into $M_{\kappa}^2$ 
(see Definition 1.4 in \cite{toyoda-five}). 
By replacing the cycle graph with another graph, and $M_{\kappa}^2$ with 
another space, we can define a large number of new conditions. 
In recent years, such graph comparison conditions have been used by Lebedeva, Petrunin and Zolotov \cite{LPZ} and 
the present author \cite{toyoda-five}, among others, and play an interesting role in the study of the geometry of metric spaces.

\subsection{Outline of the proof of Theorem \ref{nongeodesic-majorization-th}}

Our proof of Theorem \ref{nongeodesic-majorization-th} is based on the idea used by Ballmann in his lecture note \cite{B} 
for proving Reshetnyak's majorization theorem. 
To use Ballmann's argument in our setting, we prove the following lemma in Section \ref{quadruple-sec}. 

\begin{lemma}\label{quadruple-p-lemma}
Let $\kappa\in\mathbb{R}$, and 
let $(X, d_X )$ be a $\mathrm{Cycl}_4 (\kappa )$ space. 
Suppose $x,y,z,w\in X$ and $x',y',z',w'\in M_{\kappa}^2$ are points such that 
\begin{align*}
&d_{\kappa}(x',y')+d_{\kappa}(y',z')+d_{\kappa}(z',w')+d_{\kappa}(w',x')<2D_{\kappa},\\
&d_X (x,y)\leq d_{\kappa}(x' ,y' ),\quad d_X (y,z)\leq d_{\kappa}(y' ,z' ),\quad d_X (z,w)\leq d_{\kappa}(z' ,w' ),\\
&d_X (w,x)\leq d_{\kappa}(w' ,x' ),\quad d_{\kappa}(x',z')\leq d_{X}(x,z).
\end{align*}
Then we have 
$d_X (y,w)\leq d_{\kappa}(y',p)+d_{\kappa}(p,w')$ for every $p\in\lbrack x',z' \rbrack$. 
\end{lemma}

For $\kappa\leq 0$, Lemma \ref{quadruple-p-lemma} follows from a straightforward computation (see Remark \ref{key-lemma-for-nonpositive}). 
In order to prove Lemma \ref{quadruple-p-lemma} for general $\kappa\in\mathbb{R}$, 
we prove a generalization of Alexandrov's lemma \cite[p.25]{BH} in Section \ref{alexandrov-sec}. 
Since this requires a delicate treatment of angle measure in $M_{\kappa}^2$, 
we recall and organize some facts about angle measure in $M_{\kappa}^2$ in Section \ref{angle-sec}.

\subsection{Organization of the paper}
The paper is organized as follows. 
In Section \ref{preliminaries-sec}, we recall some definitions and results from 
metric geometry and the geometry of $M_{\kappa}^2$. 
In Section \ref{angle-sec}, we recall and organize some facts about angle measure in $M_{\kappa}^2$. 
In Section \ref{alexandrov-sec}, we prove a generalized Alexandrov's lemma. 
In Section \ref{quadruple-sec}, 
we prove Lemma \ref{quadruple-p-lemma}. 
In Section \ref{convex-polygon-sec}, 
we discuss a certain property of convex polygons, which we use to prove Theorem \ref{nongeodesic-majorization-th}. 
In Section \ref{quadruple-Cycln-sec}, we prove Theorem \ref{nongeodesic-majorization-th} and Theorem \ref{Cycl-th}. 
In Section \ref{boxtimes-sec}, 
we present a proof of Theorem \ref{Cycl4-boxtimes-th} for completeness.

\section{Preliminaries}\label{preliminaries-sec}

In this section, we recall some definitions and results 
from metric geometry and the geometry of $M_{\kappa}^2$.  

Let $(X,d_X )$ be a metric space. 
A {\em geodesic} in $X$ is an isometric embedding of an interval of the real line 
into $X$. 
For $x,y\in X$, a {\em geodesic segment with endpoints $x$ and $y$} 
is the image of a geodesic $\gamma :\lbrack 0,d_X (x,y)\rbrack\to X$ with 
$\gamma (0)=x$, $\gamma (d_X (x,y))=y$. 
If there exists a unique geodesic segment with endpoints $x$ and $y$, 
we denote it by $\lbrack x,y\rbrack$. 
We also denote the sets $\lbrack x,y\rbrack\setminus\{ x,y\}$, 
$\lbrack x,y\rbrack\setminus\{ x\}$ and $\lbrack x,y\rbrack\setminus\{ y\}$ by $(x,y)$, $(x,y\rbrack$ and $\lbrack x,y)$, respectively. 
A metric space $X$ is called {\em geodesic} if 
for any $x,y\in X$, there exists a geodesic 
segment with endpoints $x$ and $y$. 
Let $D\in (0,\infty )$. 
We say that $X$ is {\em $D$-geodesic} 
if for any $x,y\in X$ with $d_{X}(x,y)<D$, 
there exists a geodesic segment with endpoints $x$ and $y$. 
A subset $S$ of $X$ is called {\em convex} 
if for any $x,y\in S$, every geodesic segment in $X$ with endpoints $x$ and $y$ 
is contained in $S$. 
If this condition holds for any $x,y\in S$ with $d_{X}(x,y)<D$, 
then $S$ is called {\em $D$-convex}.

\subsection{The geometry of $M_{\kappa}^2$}
Let $\kappa\in\mathbb{R}$. 
For any $x,y\in M_{\kappa}^2$ with $d_{\kappa}(x,y)<D_{\kappa}$, 
there exists a unique geodesic segment $\lbrack x,y\rbrack$ with endpoints $x$ and $y$. 
We mean by a {\em line} in $M_{\kappa}^2$ 
the image of an isometric embedding of $\mathbb{R}$ into $M_{\kappa}^2$ if $\kappa\leq 0$, and 
a great circle in $M_{\kappa}^2$ if $\kappa >0$. 
Then for any two distinct points $x,y\in M_{\kappa}^2$ with $d_{\kappa}(x,y)<D_{\kappa}$, there exits a unique 
line through $x$ and $y$, which we denote by $\ell (x,y)$. 
For any line $\ell$ in $M_{\kappa}^2$, $M_{\kappa}^2 \setminus\ell$ consists of exactly two connected components. 
We call each connected component of $M_{\kappa}^2 \setminus\ell$ a {\em side} of $\ell$. 
One side of $\ell$ is called the {\em opposite} side of the other. 
If $x,y\in M_{\kappa}^2$ lie on the same side of a line, then $d_{\kappa}(x,y)<D_{\kappa}$. 
Each side of a line is a convex subset of $M_{\kappa}^2$, and 
the union of a line and any one of its side is a $D_{\kappa}$-convex subset of $M_{\kappa}^2$.

For $x,y,z\in M_{\kappa}^2$ with 
$0<d_{\kappa}(x,y)<D_{\kappa}$ and 
$0<d_{\kappa}(y,z)<D_{\kappa}$, 
we denote by $\angle xyz\in\lbrack 0,\pi\rbrack$ the interior angle measure at $y$ of the (possibly degenerate) triangle with vertices $x$, $y$ and $z$. 
By the law of cosines for $M_{\kappa}^2$ (see \cite[p.24]{BH}), $\angle xyz\in\lbrack 0,\pi\rbrack$ 
satisfies the following formula: 
\begin{equation*}
\cos\angle xyz
=
\begin{cases}
\dfrac{d_{\kappa}(x,y)^2+d_{\kappa}(y,z)^2-d_{\kappa}(z,x)^2}{2d_{\kappa}(x,y) d_{\kappa}(y,z)},\quad &\textrm{if }\kappa =0,\vspace{1mm}\\
\dfrac{\cosh\left(\sqrt{-\kappa}d_{\kappa}(x,y)\right)\cosh\left(\sqrt{-\kappa}d_{\kappa}(y,z)\right)
-\cosh\left(\sqrt{-\kappa}d_{\kappa}(z,x)\right)}{
\sinh\left(\sqrt{-\kappa}d_{\kappa}(x,y)\right)\sinh\left(\sqrt{-\kappa}d_{\kappa}(y,z)\right)},\quad &\textrm{if }\kappa <0,\vspace{1mm}\\
\dfrac{\cos\left(\sqrt{\kappa}d_{\kappa}(z,x)\right)
-\cos\left(\sqrt{\kappa}d_{\kappa}(x,y)\right)\cos\left(\sqrt{\kappa}d_{\kappa}(y,z)\right)}{
\sin\left(\sqrt{\kappa}d_{\kappa}(x,y)\right)\sin\left(\sqrt{\kappa}d_{\kappa}(y,z)\right)} ,\quad &\textrm{if }\kappa >0.
\end{cases}
\end{equation*}

For $x\in M_{\kappa}^2$, a {\em ray from $x$} is the image of a geodesic 
$\gamma :\lbrack 0,D_{\kappa})\to M_{\kappa}^2$ with $\gamma (0)=x$. 
Two rays from the same point $x$ are said to be {\em opposite} if they are contained in the same line, and their intersection is $\{ x\}$. 
For any two distinct points $x,y\in M_{\kappa}^2$ with $d_{\kappa}(x,y)<D_{\kappa}$, there exists a unique ray from $x$ through $y$, 
and we denote it by $R_{xy}$. 
For any ray $R$, there exists a unique ray opposite $R$, and we denote it by $\overline{R}$. 

Suppose $o,x,y\in M_{\kappa}^2$ are points such that 
$0<d_{\kappa}(o,x)<D_{\kappa}$ and 
$0<d_{\kappa}(o,y)<D_{\kappa}$. 
Then we have $\angle xoy=\angle x'oy'$ 
for any $x'\in R_{ox}\setminus\{ o\}$ and any $y' \in R_{oy}\setminus\{ o\}$. 
We have $\angle xoy=0$ (resp. $\angle xoy=\pi$) 
if and only if $y\in R_{ox}$ (resp. $y\in\overline{R_{ox}}$). 
If $\ell (o,x)\neq\ell (o,y)$, then 
we have 
$R_{ox}\setminus\{ o\} =\ell (o,x)\cap S$, where 
$S$ is the side of $\ell (o,y)$ containing $x$. 
Let $p\in R_{ox}$ and $q\in\overline{R_{ox}}$. 
If $d_{\kappa}(p,o)+d_{\kappa}(o,q)<D_{\kappa}$, then 
$o\in\lbrack p,q\rbrack$, and 
$d_{\kappa}(p,o)+d_{\kappa}(o,q)=d_{\kappa}(p,q)$. 
If $D_{\kappa}\leq d_{\kappa}(p,o)+d_{\kappa}(o,q)$, then $\kappa >0$, and 
$d_{\kappa}(p,o)+d_{\kappa}(o,q)+d_{\kappa}(q,p)=2D_{\kappa}$.

The following lemma follows immediately from the law of cosines. 

\begin{lemma}\label{law-of-cos-lemma}
Let $\kappa\in\mathbb{R}$. 
Suppose $x,y,z,x',y',z'\in M_{\kappa}^2$ are points such that 
\begin{equation*}
0<d_{\kappa}(x,y)=d_{\kappa}(x' ,y')<D_{\kappa},\quad
0<d_{\kappa}(y,z)=d_{\kappa}(y' ,z')<D_{\kappa} .
\end{equation*}
Then $d_{\kappa}(x,z)\leq d_{\kappa}(x' ,z' )$ if and only if 
$\angle xyz\leq\angle x'y'z'$. 
Moreover, $d_{\kappa} (x,z)=d_{\kappa}(x' ,z' )$ if and only if 
$\angle xyz=\angle x'y'z'$. 
\end{lemma}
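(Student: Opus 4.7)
The plan is to reduce the lemma to the elementary fact that, for each $\kappa\in\mathbb{R}$, with the two side lengths $a=d_\kappa(x,y)$ and $b=d_\kappa(y,z)$ held fixed, the law-of-cosines formula recalled just before the lemma expresses $\cos\angle xyz$ as a strictly decreasing function of $c=d_\kappa(x,z)$ on the relevant range of $c$. Since $\cos\colon [0,\pi]\to[-1,1]$ is itself a strictly decreasing bijection and, by the definition of angle measure in $M_\kappa^2$, $\angle xyz\in[0,\pi]$ is the unique number satisfying the law-of-cosines identity, monotonicity immediately yields both the inequality and the equality statements.

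First I would split into the three cases $\kappa=0$, $\kappa<0$, $\kappa>0$. For $\kappa=0$, the formula $\cos\angle xyz=(a^2+b^2-c^2)/(2ab)$ is evidently strictly decreasing in $c\ge 0$. For $\kappa<0$, the numerator of the formula is an affine function of $\cosh(\sqrt{-\kappa}c)$ with negative leading coefficient, while $\cosh$ is strictly increasing on $[0,\infty)$, so $\cos\angle xyz$ is strictly decreasing in $c\ge 0$. For $\kappa>0$, the numerator is an affine function of $\cos(\sqrt{\kappa}c)$ with positive leading coefficient; here I use that $c=d_\kappa(x,z)\le D_\kappa=\pi/\sqrt{\kappa}$, so $\sqrt{\kappa}c$ lies in $[0,\pi]$, the range on which $\cos$ is strictly decreasing. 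In all three cases $\cos\angle xyz$ is a strictly monotonically decreasing function of $c$ on the full admissible range.

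Applying this monotonicity to the two hinges $(x,y,z)$ and $(x',y',z')$, which share the common values $a$ and $b$ for the two sides meeting at $y$ and $y'$, we obtain the equivalences
\begin{equation*}
d_\kappa(x,z)\le d_\kappa(x',z')\iff\cos\angle xyz\ge\cos\angle x'y'z'\iff\angle xyz\le\angle x'y'z',
\end{equation*}
and the analogous equivalence with $\le$ replaced by $=$ throughout, which is exactly what the lemma asserts.

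The argument is essentially a calculus exercise, so there is no real obstacle; the only point requiring attention is the case $\kappa>0$, where one must note that the admissible values of $c$ lie in $[0,D_\kappa]$ to guarantee that $\cos(\sqrt{\kappa}c)$ is strictly monotone in $c$. This follows at once from $D_\kappa=\pi/\sqrt{\kappa}$ being the diameter of $M_\kappa^2$.
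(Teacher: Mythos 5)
Your proposal is correct and matches the paper's approach: the paper states this lemma with the remark that it ``follows immediately from the law of cosines,'' and your argument simply makes that explicit by observing that, with $a=d_{\kappa}(x,y)$ and $b=d_{\kappa}(y,z)$ fixed, each of the three law-of-cosines formulas expresses $\cos\angle xyz$ as a strictly decreasing function of $c=d_{\kappa}(x,z)$ on the admissible range (using $\sqrt{\kappa}\,c\in\lbrack 0,\pi\rbrack$ when $\kappa>0$). The case analysis and the monotonicity observations are all accurate, so this is exactly the intended verification.
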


The following lemma also follows from the law of cosines.

\begin{lemma}\label{from-law-of-cos-lemma}
Let $\kappa\in\mathbb{R}$. 
Suppose 
$x,y,z\in M_{\kappa}^2$ are three distinct points such that 
\begin{equation*}
d_{\kappa} (x,y)+d_{\kappa} (y,z)<D_{\kappa},\quad
d_{\kappa}(x,y)\leq d_{\kappa}(y,z).
\end{equation*}
Suppose $x',y',z'\in M_{\kappa}^2$ are points such that 
\begin{equation*}
d_{\kappa} (x,y)=d_{\kappa}(x',y'),\quad d_{\kappa} (y,z)=d_{\kappa} (y',z'),\quad d_{\kappa}(z,x)\leq d_{\kappa}(z',x' ).
\end{equation*}
Then $\angle y'x'z' \leq\angle yxz$. 
\end{lemma}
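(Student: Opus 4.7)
The plan is to view $\cos\angle yxz$ as a function of the third side $c := d_\kappa(x,z)$, with $a := d_\kappa(x,y)$ and $d := d_\kappa(y,z)$ held fixed, and show that this function is non-decreasing in $c$ under the hypothesis $a \le d$. The standing hypothesis $a + d < D_\kappa$, together with the triangle inequality in $M_\kappa^2$, ensures that $c$ and $c' := d_\kappa(x',z')$ both lie in $(0,\, a+d] \subset (0, D_\kappa)$, so the angles in question are well-defined. Once monotonicity is established, the hypothesis $c \le c'$ forces $\cos\angle yxz \le \cos\angle y'x'z'$, and the conclusion $\angle y'x'z' \le \angle yxz$ follows from the strict monotonicity of $\cos$ on $[0,\pi]$.

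\textbf{Reduction to an algebraic inequality.} Write $F_\kappa(c) := \cos\angle yxz$ via the law of cosines recalled in the introduction. A direct differentiation with respect to $c$ and simplification show that the sign of $F_\kappa'(c)$ coincides with the sign of $c^2 + d^2 - a^2$ when $\kappa = 0$, with that of $\cosh(\sqrt{-\kappa}\,c)\cosh(\sqrt{-\kappa}\,d) - \cosh(\sqrt{-\kappa}\,a)$ when $\kappa < 0$, and with that of $\cos(\sqrt{\kappa}\,a) - \cos(\sqrt{\kappa}\,c)\cos(\sqrt{\kappa}\,d)$ when $\kappa > 0$. Substituting the law of cosines for $c$ (parametrized by $\theta := \angle xyz \in [0,\pi]$), each of the latter two quantities factors as a positive multiple of
\[
\cos(\sqrt{\kappa}\,a)\sin(\sqrt{\kappa}\,d) - \sin(\sqrt{\kappa}\,a)\cos(\sqrt{\kappa}\,d)\cos\theta
\]
in the spherical case (with a completely analogous hyperbolic form when $\kappa < 0$), so the proof reduces to showing this last expression is non-negative.

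\textbf{The main obstacle.} The Euclidean case is immediate from $d \ge a$; the hyperbolic case follows from $\cos\theta \le 1$ together with $\tanh(\sqrt{-\kappa}\,a) \le \tanh(\sqrt{-\kappa}\,d)$, since all hyperbolic sines and cosines are positive. The genuinely delicate case is the spherical one, because $\cos(\sqrt{\kappa}\,d)$ can be negative (when $\sqrt{\kappa}\,d > \pi/2$) and $\cos\theta$ can also be negative, so the naive bound $\cos\theta \le 1$ fails precisely when both are negative. I would handle this by a case analysis on the signs of $\cos(\sqrt{\kappa}\,d)$ and $\cos\theta$; in the only non-trivial subcase, where both are negative, the inequality reduces to $\sin(\sqrt{\kappa}\,(a+d)) \ge 0$, which is exactly where the standing hypothesis $a + d < \pi/\sqrt{\kappa} = D_\kappa$ enters essentially. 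This case analysis is the only non-routine step of the argument.
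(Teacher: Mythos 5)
Your proposal is correct and takes essentially the same route as the paper's proof: both show that $\cos\angle yxz$, expressed through the law of cosines as a function of the side $d_{\kappa}(x,z)$ with the other two sides fixed, is non-decreasing, via a derivative-sign analysis split into the cases $\kappa =0$, $\kappa <0$, $\kappa >0$, with the hypothesis $d_{\kappa}(x,y)+d_{\kappa}(y,z)<D_{\kappa}$ entering only in the spherical subcase where $\cos\left(\sqrt{\kappa}\,d_{\kappa}(y,z)\right)<0$. Your parametrization by $\theta$ with the factored form $\cos (\sqrt{\kappa}a)\sin (\sqrt{\kappa}d)-\sin (\sqrt{\kappa}a)\cos (\sqrt{\kappa}d)\cos\theta$ is merely a repackaging of the paper's direct bound $\cos (\sqrt{\kappa}\alpha )\geq -1$: your worst case $\cos\theta =-1$, which yields $\sin\left(\sqrt{\kappa}(a+d)\right)>0$, is exactly the paper's inequality $\cos (\sqrt{\kappa}a)+\cos (\sqrt{\kappa}d)>0$.
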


\begin{proof}
We consider three cases. 

\textsc{Case 1}: 
{\em $\kappa =0$.}
In this case, we have $\angle yzx\leq\angle yxz$ and 
$\angle y'z'x'\leq\angle y'x'z'$ by hypothesis, and therefore 
$\angle yzx\leq\pi /2$ and $\angle y'z'x'\leq \pi /2$. 
It follows that 
\begin{align*}
0&\leq d_{\kappa}(x,z)^2+d_{\kappa}(y,z)^2-d_{\kappa}(x,y)^2,\\
0&\leq d_{\kappa}(x',z')^2+d_{\kappa}(y',z')^2-d_{\kappa}(x',y')^2 
=d_{\kappa}(x',z')^2+d_{\kappa}(y,z)^2-d_{\kappa}(x,y)^2 ,
\end{align*}
and therefore we have 
\begin{equation}\label{from-law-phthagoas-ineq}
0\leq\alpha^2+d_{\kappa}(y,z)^2-d_{\kappa}(x,y)^2 
\end{equation}
for any $\alpha\in\lbrack d_{\kappa}(x,z),d_{\kappa}(x',z')\rbrack$. 
Define a function $f_1 :(0,\infty )\to\mathbb{R}$ by 
\begin{equation*}
f_1 (\alpha )
=
\frac{d_{\kappa}(x,y)^2+\alpha^2-d_{\kappa}(y,z)^2}{2 d_{\kappa}(x,y)\alpha} .
\end{equation*}
Then we have 
\begin{equation*}
\frac{d}{d\alpha}f_1 (\alpha )
=
\frac{1}{2d_{\kappa}(x,y)\alpha^2}
\left(
\alpha^2 +d_{\kappa}(y,z)^2 -d_{\kappa}(x,y)^2
\right)
\geq 0
\end{equation*} 
for any $\alpha\in\lbrack d_{\kappa}(x,z),d_{\kappa}(x',z')\rbrack$ by \eqref{from-law-phthagoas-ineq}, 
which implies that $\angle y'x'z' \leq\angle yxz$ 
because 
\begin{equation*}
f_1 (d_{\kappa}(x,z))
=
\cos\angle yxz,\quad
f_1 (d_{\kappa}(x',z'))
=
\cos\angle y'x'z'
\end{equation*}
by the law of cosines. 

\textsc{Case 2}: 
{\em $\kappa <0$.} 
In this case, we define a function $f_2 :(0,\infty )\to\mathbb{R}$ by 
\begin{equation*}
f_2 (\alpha )
=
\frac{\cosh\left(\sqrt{-\kappa}d_{\kappa}(x,y)\right)\cosh\left(\sqrt{-\kappa}\alpha\right)
-\cosh\left(\sqrt{-\kappa}d_{\kappa}(y,z)\right)}{
\sinh\left(\sqrt{-\kappa}d_{\kappa}(x,y)\right)\sinh\left(\sqrt{-\kappa}\alpha\right)}. 
\end{equation*}
Then we have 
\begin{align*}
\frac{d}{d\alpha}f_2 (\alpha )
&=
\frac{\sqrt{-\kappa}\left(
\cosh\left(\sqrt{-\kappa}d_{\kappa}(y,z)\right)
\cosh\left(\sqrt{-\kappa}\alpha\right)
-\cosh\left(\sqrt{-\kappa}d_{\kappa}(x,y)\right)
\right)}{\sinh\left(\sqrt{-\kappa}d_{\kappa}(x,y)\right)\sinh^2 \left(\sqrt{-\kappa}\alpha\right)}\\
&\geq
\frac{\sqrt{-\kappa}\left(
\cosh\left(\sqrt{-\kappa}d_{\kappa}(y,z)\right)
-\cosh\left(\sqrt{-\kappa}d_{\kappa}(x,y)\right)
\right)}{\sinh\left(\sqrt{-\kappa}d_{\kappa}(x,y)\right)\sinh^2 \left(\sqrt{-\kappa}\alpha\right)}
\geq 0
\end{align*}
for any $\alpha\in (0,\infty )$ by hypothesis. 
This implies that 
$\angle y'x'z' \leq\angle yxz$ 
because 
\begin{equation*}
f_2 (d_{\kappa}(x,z))
=
\cos\angle yxz,\quad
f_2 (d_{\kappa}(x',z'))
=
\cos\angle y'x'z'
\end{equation*}
by the law of cosines. 

\textsc{Case 3}: 
{\em $\kappa >0$.} 
In this case, we define a function $f_3 :(0,D_{\kappa})\to\mathbb{R}$ by 
\begin{equation*}
f_3 (\alpha )
=
\frac{\cos\left(\sqrt{\kappa}d_{\kappa}(y,z)\right)
-\cos\left(\sqrt{\kappa}d_{\kappa}(x,y)\right)\cos\left(\sqrt{\kappa}\alpha\right)}{
\sin\left(\sqrt{\kappa}d_{\kappa}(x,y)\right)\sin\left(\sqrt{\kappa}\alpha\right)}. 
\end{equation*}
Then 
\begin{equation*}
\frac{d}{d\alpha}f_3 (\alpha )
=
\frac{\sqrt{\kappa}\left(
\cos\left(\sqrt{\kappa}d_{\kappa}(x,y)\right)
-\cos\left(\sqrt{\kappa}d_{\kappa}(y,z)\right)\cos\left(\sqrt{\kappa}\alpha\right)
\right)}{\sin\left(\sqrt{\kappa}d_{\kappa}(x,y)\right)\sin^2 \left(\sqrt{\kappa}\alpha\right)}.
\end{equation*}
If $d_{\kappa}(y,z)\leq D_{\kappa}/2$, then 
\begin{equation*}
0<\sqrt{\kappa}d_{\kappa}(x,y)\leq\sqrt{\kappa}d_{\kappa}(y,z)\leq\frac{\pi}{2}
\end{equation*}
by hypothesis, 
and therefore 
\begin{align*}
\cos\left(\sqrt{\kappa}d_{\kappa}(x,y)\right)
&-\cos\left(\sqrt{\kappa}d_{\kappa}(y,z)\right)\cos\left(\sqrt{\kappa}\alpha\right) \\
&\geq
\cos\left(\sqrt{\kappa}d_{\kappa}(x,y)\right)
-
\cos\left(\sqrt{\kappa}d_{\kappa}(y,z)\right)
\geq 0
\end{align*}
for any $\alpha\in (0,D_{\kappa})$. 
If $d_{\kappa}(y,z)>D_{\kappa}/2$, then 
\begin{equation*}
0<\sqrt{\kappa}d_{\kappa}(x,y)<\frac{\pi}{2}<\sqrt{\kappa}d_{\kappa}(y,z)<\pi ,\quad
\sqrt{\kappa}d_{\kappa}(x,y)<\pi-\sqrt{\kappa}d_{\kappa}(y,z)
\end{equation*}
by hypothesis, and therefore 
\begin{align*}
\cos\left(\sqrt{\kappa}d_{\kappa}(x,y)\right)
&-\cos\left(\sqrt{\kappa}d_{\kappa}(y,z)\right)\cos\left(\sqrt{\kappa}\alpha\right) \\
&\geq
\cos\left(\sqrt{\kappa}d_{\kappa}(x,y)\right)
+\cos\left(\sqrt{\kappa}d_{\kappa}(y,z)\right) \\
&=
\cos\left(\sqrt{\kappa}d_{\kappa}(x,y)\right)
-\cos\left(\pi-\sqrt{\kappa}d_{\kappa}(y,z)\right) \\
&>
\cos\left(\sqrt{\kappa}d_{\kappa}(x,y)\right)
-\cos\left(\sqrt{\kappa}d_{\kappa}(x,y)\right) =0
\end{align*}
for any $\alpha\in (0,D_{\kappa})$. 
Thus we always have 
\begin{equation*}
0\leq
\frac{d}{d\alpha}f_3 (\alpha )
\end{equation*}
for any $\alpha\in (0,D_{\kappa})$. 
This implies that $\angle y'x'z' \leq\angle yxz$ 
because 
\begin{equation*}
f_3 (d_{\kappa}(x,z))
=
\cos\angle yxz,\quad
f_3 (d_{\kappa}(x',z'))
=
\cos\angle y'x'z'
\end{equation*}
by the law of cosines.

The above three cases exhaust all possibilities. 
\end{proof}

The following formulas follow from straightforward computation. 
See \cite[Chapter I.2]{BH} for a guide to such computations concerning the distance function on $M_{\kappa}^2$. 

\begin{lemma}\label{naibunten-lem}
Let $\kappa\in\mathbb{R}$. 
Suppose 
$x,y,z\in M_{\kappa}^2$ are points such that $x\neq z$ and 
$d_{\kappa}(a,b)<D_{\kappa}$ for any $a,b\in\{ x,y,z\}$. 
Suppose $\gamma :\lbrack 0,d_{\kappa}(x,z)\rbrack\to M_{\kappa}^2$ is the geodesic such that $\gamma (0)=x$ and $\gamma (d_{\kappa}(x,z))=z$. 
Let $t\in\lbrack 0,1\rbrack$, and let $p=\gamma\left( td_{\kappa}(x,z)\right)$. 
If $\kappa =0$, then 
\begin{equation*}
d_{\kappa}(y,p)^2
=
(1-t)d_{\kappa}(x,y)^2 +td_{\kappa}(y,z)^2 -t(1-t)d_{\kappa}(x,z)^2 .
\end{equation*}
If $\kappa <0$, then 
\begin{multline*}
\cosh\left(\sqrt{-\kappa}d_{\kappa}(y,p)\right)\\
=
\frac{1}{\sinh\left(\sqrt{-\kappa}d_{\kappa} (x,z)\right)}
\bigg(\sinh\left(\sqrt{-\kappa}(1-t)d_{\kappa}(x,z)\right)\cosh\left(\sqrt{-\kappa}d_{\kappa}(x,y)\right)\\
+\sinh\left(\sqrt{-\kappa}td_{\kappa}(x,z)\right)\cosh\left(\sqrt{-\kappa}d_{\kappa}(y,z)\right)\bigg) .
\end{multline*}
If $\kappa >0$, then 
\begin{multline*}
\cos\left(\sqrt{\kappa}d_{\kappa}(y,p)\right)\\
=
\frac{\sin\left(\sqrt{\kappa}(1-t)d_{\kappa}(x,z)\right)\cos\left(\sqrt{\kappa}d_{\kappa}(x,y)\right)
+\sin\left(\sqrt{\kappa}td_{\kappa}(x,z)\right)\cos\left(\sqrt{\kappa}d_{\kappa}(y,z)\right)}{\sin\left(\sqrt{\kappa}d_{\kappa}(x,z)\right)}.
\end{multline*}
\end{lemma}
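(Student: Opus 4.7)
The three cases can be handled in a uniform way by realizing $M_\kappa^2$ inside a three-dimensional ambient space in which the geodesic $\gamma$ admits a simple linear parametrization. The plan is to verify in each case a closed form
\[
\gamma(s) \;=\; A_\kappa(d-s)\, x \;+\; A_\kappa(s)\, z,
\]
where $d = d_\kappa(x,z)$ and $A_\kappa$ is the appropriate normalized sine-like function, and then to pair this identity with $y$ in the ambient bilinear form. All three stated formulas drop out immediately.

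For $\kappa = 0$, I would work in $\mathbb{R}^2$, where $p = (1-t)x + tz$. Expanding
\[
|y-p|^2 \;=\; (1-t)^2 |y-x|^2 \;+\; 2t(1-t)\langle y-x,\, y-z\rangle \;+\; t^2 |y-z|^2,
\]
and substituting the polarization identity $2\langle y-x,\, y-z\rangle = |y-x|^2 + |y-z|^2 - |x-z|^2$ (which is just the Euclidean law of cosines), one collects coefficients of $|y-x|^2$, $|y-z|^2$, and $|x-z|^2$, which reduce respectively to $1-t$, $t$, and $-t(1-t)$. This is the asserted Euclidean formula.

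For $\kappa < 0$, I would use the hyperboloid model of $M_\kappa^2$ inside Minkowski space $\mathbb{R}^{2,1}$, in which $\cosh(\sqrt{-\kappa}\,d_\kappa(a,b))$ is a constant multiple of the Minkowski inner product of $a$ and $b$. Since geodesics on the hyperboloid are exactly its intersections with linear $2$-planes through the origin, $\gamma(s)$ must be a linear combination of $x$ and $z$; the boundary conditions $\gamma(0)=x$, $\gamma(d)=z$ together with unit speed then force
\[
\gamma(s) \;=\; \frac{\sinh(\sqrt{-\kappa}(d-s))}{\sinh(\sqrt{-\kappa}\,d)}\, x \;+\; \frac{\sinh(\sqrt{-\kappa}\,s)}{\sinh(\sqrt{-\kappa}\,d)}\, z,
\]
as one checks via the identity $\sinh(a+b) = \sinh a \cosh b + \cosh a \sinh b$. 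Setting $s = td$ and pairing with $y$ in the Minkowski inner product yields the claimed hyperbolic formula. The case $\kappa > 0$ proceeds identically, working with the round sphere of radius $1/\sqrt{\kappa}$ in $\mathbb{R}^3$, the standard dot product, and $\sin$ in place of $\sinh$.

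I expect no deep obstacle: the whole computation reduces to the addition formulas for $\sinh$ and $\sin$, together with the familiar linear parametrization of geodesics on the hyperboloid and the sphere. The one step deserving care is the spherical case, where I would invoke the hypothesis $d_\kappa(a,b) < D_\kappa$ for $a, b \in \{x,y,z\}$ to ensure that all arguments of $\sin$ stay strictly between $0$ and $\pi$, so that the denominators $\sin(\sqrt{\kappa}\,d)$ do not vanish and the signs of the sine factors are unambiguous.
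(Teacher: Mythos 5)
Your proposal is correct and is essentially the argument the paper intends: the paper gives no written proof of this lemma, stating only that the formulas ``follow from straightforward computation'' and citing \cite[Chapter I.2]{BH}, which is precisely the model-space computation you carry out (Euclidean polarization, the hyperboloid in Minkowski space, and the round sphere of radius $1/\sqrt{\kappa}$, with the linear parametrization of geodesics verified via the addition formulas for $\sinh$ and $\sin$). Your attention to the nonvanishing of $\sin\left(\sqrt{\kappa}\,d_{\kappa}(x,z)\right)$, guaranteed by $x\neq z$ and $d_{\kappa}(x,z)<D_{\kappa}$, is exactly the point where the hypotheses enter, so the proof is complete as sketched.
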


The next corollary follows immediately from Lemma \ref{naibunten-lem}

\begin{corollary}\label{naibunten-coro}
Let $\kappa\in\mathbb{R}$. 
Suppose $x,y,z,\tilde{y}\in M_{\kappa}^2$ are points 
such that 
\begin{equation*}
d_{\kappa} (x,y)\leq d_{\kappa}(x,\tilde{y})<D_{\kappa},\quad 
d_{\kappa} (y,z)\leq d_{\kappa} (\tilde{y},z)<D_{\kappa},\quad
0<d_{\kappa}(x,z)<D_{\kappa}.
\end{equation*}
Then we have $d_{\kappa} (y,p)\leq d_{\kappa}(\tilde{y},p)$ for any $p\in\lbrack x,z\rbrack$. 
\end{corollary}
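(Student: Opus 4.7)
The plan is to fix $t\in[0,1]$ with $p=\gamma(td_{\kappa}(x,z))$ and apply Lemma \ref{naibunten-lem} twice: once to the triple $(x,y,z)$ and once to the triple $(x,\tilde{y},z)$, obtaining explicit expressions for $d_{\kappa}(y,p)$ and $d_{\kappa}(\tilde{y},p)$. In each of the three cases $\kappa=0$, $\kappa<0$, $\kappa>0$, the right-hand side of the formula in Lemma \ref{naibunten-lem} is a weighted sum (in the appropriate transcendental function of the distance) of two terms depending on $d_{\kappa}(x,y)$ and $d_{\kappa}(y,z)$, with weights that do not depend on $y$. I would verify, for each case, that these weights are non-negative and that the transcendental function applied to the distance is monotone in a way that lets the hypotheses $d_{\kappa}(x,y)\leq d_{\kappa}(x,\tilde y)$ and $d_{\kappa}(y,z)\leq d_{\kappa}(\tilde y,z)$ propagate to $d_{\kappa}(y,p)\leq d_{\kappa}(\tilde y,p)$.

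Concretely, when $\kappa=0$ the formula is a convex combination of squared distances with non-negative coefficients $1-t$ and $t$, so replacing $y$ by $\tilde y$ only makes the right-hand side larger, and squaring is monotone on $[0,\infty)$. When $\kappa<0$, the coefficients are
\[
\frac{\sinh(\sqrt{-\kappa}(1-t)d_{\kappa}(x,z))}{\sinh(\sqrt{-\kappa}d_{\kappa}(x,z))},\qquad \frac{\sinh(\sqrt{-\kappa}td_{\kappa}(x,z))}{\sinh(\sqrt{-\kappa}d_{\kappa}(x,z))},
\]
both non-negative because their arguments lie in $[0,\sqrt{-\kappa}d_{\kappa}(x,z)]$. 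Since $\cosh$ is increasing on $[0,\infty)$, the hypothesis pushes the right-hand side up, and inverting $\cosh$ (again monotone) gives $d_{\kappa}(y,p)\leq d_{\kappa}(\tilde y,p)$.

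When $\kappa>0$, the analogous coefficients use $\sin$ in place of $\sinh$; they are still non-negative because $\sqrt{\kappa}(1-t)d_{\kappa}(x,z)$ and $\sqrt{\kappa}td_{\kappa}(x,z)$ lie in $[0,\sqrt{\kappa}d_{\kappa}(x,z)]\subset[0,\pi)$, and $\sin$ is non-negative on $[0,\pi]$. Now $\cos$ is strictly decreasing on $[0,\pi]$, so replacing $y$ by $\tilde y$ decreases $\cos(\sqrt{\kappa}d_{\kappa}(y,p))$; since every distance in $M_{\kappa}^2$ lies in $[0,D_{\kappa}]$, i.e.\ $\sqrt{\kappa}d_{\kappa}(y,p),\sqrt{\kappa}d_{\kappa}(\tilde y,p)\in[0,\pi]$, inverting $\cos$ on this interval yields the desired inequality.

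The only mildly delicate point, which I would state carefully, is checking the signs of the sinusoidal coefficients in the spherical case and confirming that the arguments remain in the range where $\cos$ is injective; otherwise each case reduces to monotonicity of an elementary function applied to a manifestly monotone combination, so no real obstacle arises and the corollary follows in a few lines from Lemma \ref{naibunten-lem}.
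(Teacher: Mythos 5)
Your proposal is correct and takes essentially the same route as the paper, which states the corollary as an immediate consequence of Lemma \ref{naibunten-lem}: apply the formula to the triples $(x,y,z)$ and $(x,\tilde{y},z)$ with the same $t$ and use nonnegativity of the coefficients together with monotonicity of the relevant transcendental function. Your sign checks for the sinusoidal coefficients and the observation that $\sqrt{\kappa}\,d_{\kappa}(y,p),\sqrt{\kappa}\,d_{\kappa}(\tilde{y},p)\in\lbrack 0,\pi\rbrack$ so that $\cos$ can be inverted are exactly the details the paper leaves implicit.
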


Although it is not necessary for our purpose, 
it is worth noting that 
for $\kappa\in (-\infty ,0\rbrack$, Lemma \ref{naibunten-lem} also implies the following corollary immediately. 

\begin{corollary}\label{naibunten-coro2}
Let $\kappa\in (-\infty ,0\rbrack$. 
Suppose $x,y,z,\tilde{x},\tilde{y},\tilde{z}\in M_{\kappa}^2$ are points such that 
\begin{equation*}
d_{\kappa}(x,y)\leq d_{\kappa}(\tilde{x},\tilde{y})<D_{\kappa},\quad 
d_{\kappa}(y,z)\leq d_{\kappa}(\tilde{y},\tilde{z})<D_{\kappa},\quad
0<d_{\kappa}(\tilde{x},\tilde{z})\leq d_{\kappa}(x,z)<D_{\kappa}.
\end{equation*}
Let $\gamma :\lbrack 0,d_{\kappa}(x,z)\rbrack\to M_{\kappa}^2$ and 
$\tilde{\gamma}:\lbrack 0,d_{\kappa}(\tilde{x},\tilde{z})\rbrack\to M_{\kappa}^2$ 
be the geodesics such that 
\begin{equation*}
\gamma (0)=x,\quad\gamma (d_{\kappa}(x,z))=z,\quad
\tilde{\gamma}(0)=\tilde{x},\quad\tilde{\gamma}(d_{\kappa}(\tilde{x},\tilde{z}))=\tilde{z}.
\end{equation*} 
Fix $t\in\lbrack 0,1\rbrack$, and set 
$p=\gamma (td_{\kappa}(x,z))$, $\tilde{p}=\tilde{\gamma}(td_{\kappa}(\tilde{x},\tilde{z}))$. 
Then 
$d_{\kappa} (y,p)\leq d_{\kappa}(\tilde{y},\tilde{p})$. 
\end{corollary}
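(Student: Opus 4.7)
The plan is to read off the conclusion directly from the explicit formulas of Lemma~\ref{naibunten-lem}. Those formulas express $d_{\kappa}(y,p)$ as a function $F(a,b,c,t)$ of the three side lengths $a=d_{\kappa}(x,y)$, $b=d_{\kappa}(y,z)$, $c=d_{\kappa}(x,z)$ and the parameter $t$ alone, and the same formula applied to the tilded triangle yields $d_{\kappa}(\tilde y,\tilde p)=F(\tilde a,\tilde b,\tilde c,t)$. Under the hypotheses we have $\tilde a\geq a$, $\tilde b\geq b$, $\tilde c\leq c$, so it suffices to show that $F$ is non-decreasing in $a$, non-decreasing in $b$, and non-increasing in $c$ on the relevant range.

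For $\kappa=0$ all three monotonicities are immediate from $F^{2}=(1-t)a^{2}+tb^{2}-t(1-t)c^{2}$. For $\kappa<0$, setting $k=\sqrt{-\kappa}$, Lemma~\ref{naibunten-lem} can be rewritten as
\begin{equation*}
\cosh(kF)=\frac{\sinh\bigl(k(1-t)c\bigr)}{\sinh(kc)}\cosh(ka)+\frac{\sinh(ktc)}{\sinh(kc)}\cosh(kb),
\end{equation*}
from which the monotonicities in $a$ and $b$ are immediate, since both coefficients are positive and $\cosh$ is increasing on $[0,\infty)$.

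The one nonroutine step I would actually carry out is the monotonicity in $c$ for $\kappa<0$. By the displayed identity it suffices to show, for each fixed $s\in(0,1)$, that $g_{s}(u)=\sinh(su)/\sinh(u)$ is non-increasing for $u>0$; a short computation gives $g_{s}'(u)=N(u)/\sinh^{2}(u)$ with $N(0)=0$ and $N'(u)=(s^{2}-1)\sinh(su)\sinh(u)\leq 0$, hence $N\leq 0$ and $g_{s}$ is non-increasing. Applying this with $s\in\{t,1-t\}$ and $u=kc$ shows that each coefficient in the formula for $\cosh(kF)$ decreases as $c$ increases, so $F$ itself decreases.

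Combining the three monotonicities yields $F(a,b,c,t)\leq F(\tilde a,\tilde b,\tilde c,t)$, which is the desired conclusion. The main obstacle is essentially the $c$-monotonicity for $\kappa<0$; once this is established, everything else follows by inspection of the formulas. One could alternatively first invoke Corollary~\ref{naibunten-coro} to reduce to the case $a=\tilde a$, $b=\tilde b$---the triangle inequalities $\tilde a+\tilde b\geq a+b\geq c$ and $|\tilde a-\tilde b|\leq\tilde c\leq c$ guarantee existence of a suitable auxiliary point $y'$ with $d_{\kappa}(x,y')=\tilde a$, $d_{\kappa}(y',z)=\tilde b$---leaving only the $c$-monotonicity to verify; but since the full formula is needed for that step anyway, the direct approach seems cleanest.
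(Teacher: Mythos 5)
Your proof is correct and follows exactly the route the paper intends: the paper states that Corollary \ref{naibunten-coro2} follows ``immediately'' from the explicit formulas of Lemma \ref{naibunten-lem}, and your argument simply carries out that reading, with the monotonicity in $a$ and $b$ coming from the positivity of the coefficients and the monotonicity of $\cosh$, and the one genuinely non-obvious point---that $u\mapsto\sinh(su)/\sinh(u)$ is non-increasing on $(0,\infty)$ for $s\in(0,1)$---verified by your correct computation $N'(u)=(s^{2}-1)\sinh(su)\sinh(u)\leq 0$. (This last step is also precisely what fails for $\kappa>0$, where $\sin(su)/\sin(u)$ is non-decreasing on $(0,\pi)$, consistent with the paper's restriction to $\kappa\leq 0$.)
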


\begin{remark}\label{key-lemma-for-nonpositive}
For the case in which $\kappa\in (-\infty ,0\rbrack$, we can prove Lemma \ref{quadruple-p-lemma} easily by using Corollary \ref{naibunten-coro2}. 
To prove Lemma \ref{quadruple-p-lemma} for general $\kappa\in\mathbb{R}$, we will develop another method 
in Section \ref{angle-sec}, \ref{alexandrov-sec} and \ref{quadruple-sec}
\end{remark}

\subsection{Polygons in $M_{\kappa}^2$}
Let $\kappa\in\mathbb{R}$. 
For a subset $S$ of $M_{\kappa}^2$, 
the {\em convex hull of $S$} is the intersection of all convex subsets of $M_{\kappa}^2$ containing $S$, or 
equivalently, the minimal convex subset of $M_{\kappa}^2$ containing $S$. 
We denote the convex hull of $S$ by $\mathrm{conv}(S)$. 
We recall the following well-known fact, which is 
trivial when $\kappa\leq 0$. 

\begin{proposition}\label{hemisphere-prop}
Let $\kappa\in\mathbb{R}$, and let $n\geq 3$ be an integer. 
Suppose $f:\mathbb{Z}/n\mathbb{Z}\to M_{\kappa}^2$ is a map such that 
$\sum_{i\in\mathbb{Z}/n\mathbb{Z}}d_{\kappa}(f(i),f(i+\lbrack 1\rbrack_n ))<2D_{\kappa}$. 
Then there exists a line $L$ in $M_{\kappa}^2$ such that $\mathrm{conv}(f(\mathbb{Z}/n\mathbb{Z}))$ 
is contained in one side of $L$. 
In particular, any $p,q\in\mathrm{conv}(f(\mathbb{Z}/n\mathbb{Z}))$ satisfy $d_{\kappa}(p,q)<D_{\kappa}$. 
\end{proposition}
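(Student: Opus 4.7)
The plan is to split on the sign of $\kappa$. When $\kappa \leq 0$ one has $D_{\kappa} = \infty$, so the hypothesis on the perimeter is vacuous; the convex hull of the finitely many points $f(\mathbb{Z}/n\mathbb{Z})$ in $M_{\kappa}^2$ (which is $\mathbb{R}^2$ or the real hyperbolic plane) is compact, and any geodesic line disjoint from this compact set---say a supporting line pushed slightly to one side---witnesses the desired conclusion. The ``in particular'' clause holds automatically since $D_{\kappa} = \infty$.

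The substantive case is $\kappa > 0$. After rescaling I may assume $\kappa = 1$, so $M_{\kappa}^2 = S^2$, lines are great circles, sides are open hemispheres, and $D_{\kappa} = \pi$. I would form the closed polygonal curve $\gamma \subset S^2$ by concatenating, for each $i \in \mathbb{Z}/n\mathbb{Z}$, a minimizing geodesic arc from $f(i)$ to $f(i+\lbrack 1\rbrack_n)$; its total length is $\sum_i d_1(f(i),f(i+\lbrack 1\rbrack_n)) < 2\pi$. It therefore suffices to produce a great circle $L$ disjoint from $\gamma$: then $\gamma$ lies in one of the two open hemispheres bounded by $L$, and because every open hemisphere in $S^2$ is convex (any two of its points lie at distance less than $\pi$ and the unique minimizing geodesic between them remains inside), $\mathrm{conv}(f(\mathbb{Z}/n\mathbb{Z}))$ lies in the same hemisphere.

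To produce such a great circle I would invoke the Crofton formula on $S^2$: the length of any rectifiable curve equals $\pi$ times the average, taken over the normalized rotation-invariant measure on the space of unoriented great circles, of the number of its intersection points with a random great circle. Since $\mathrm{length}(\gamma)/\pi < 2$, this average is strictly less than $2$. For every great circle transverse to $\gamma$---and the non-transverse ones (those passing through a vertex of the polygon or tangent to an edge) form a set of measure zero---the number of intersections is even, hence either $0$ or at least $2$. If every such transverse great circle met $\gamma$ in at least two points the average would be $\geq 2$, a contradiction; therefore a positive-measure family of great circles misses $\gamma$, and any one of them serves as $L$.

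The ``in particular'' statement in the spherical case is now immediate, since two points of an open hemisphere in $S^2$ lie at distance strictly less than $\pi$; undoing the rescaling yields $d_{\kappa}(p,q) < D_{\kappa}$ for all $p,q \in \mathrm{conv}(f(\mathbb{Z}/n\mathbb{Z}))$. The main obstacle I foresee is the clean handling of the non-transverse great circles in the Crofton step, but as they form a measure-zero set they can be safely ignored; alternatively one may first perturb $\gamma$ slightly to put it in general position, both of which are standard.
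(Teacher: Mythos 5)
Your argument is correct. Note first that the paper itself offers no proof of this proposition: it is stated as a well-known fact, ``trivial when $\kappa\leq 0$,'' so there is no in-paper argument to compare against; your write-up actually supplies what the paper omits. Your treatment of $\kappa\leq 0$ matches the paper's implicit reasoning (the perimeter hypothesis is vacuous and a compact convex hull misses some line). For $\kappa>0$ your Crofton-formula argument is a standard and valid proof of the classical hemisphere lemma; the normalization you use (length equals $\pi$ times the mean number of intersections with a random great circle) checks out against the test case of a great circle itself, and the parity argument for transverse great circles, together with the observation that the exceptional circles (those through a vertex or containing an edge) form a null set, correctly yields a positive-measure family of great circles disjoint from the closed polygonal curve $\gamma$; convexity of open hemispheres then finishes the claim for the convex hull of the vertex set. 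Two small points are worth making explicit. First, for the polygonal curve to be built from \emph{unique} minimizing arcs you should observe that no edge can have length exactly $\pi$: if $d_{\kappa}(f(i),f(i+\lbrack 1\rbrack_n))=\pi$ then the triangle inequality applied around the remaining edges forces the total perimeter to be at least $2\pi$, contradicting the hypothesis. Second, the alternative classical proof (bisect $\gamma$ into two arcs of equal length by points $p,q$ and show $\gamma$ lies in the open hemisphere centered at the midpoint of $\lbrack p,q\rbrack$) avoids integral geometry entirely and is more elementary, but your route is equally rigorous.
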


\subsection{$\mathrm{CAT}(\kappa )$ spaces}
A \textit{geodesic triangle} in a metric space $X$ is a triple 
$\triangle =(\gamma_1 ,\gamma_2 ,\gamma_3 )$ 
of geodesics 
$\gamma_i :\lbrack a_i,b_i \rbrack\to X$ 
such that 
$\gamma_1 (b_1)=\gamma_2 (a_2)$, 
$\gamma_2 (b_2)=\gamma_3 (a_3)$ and 
$\gamma_3 (b_3)=\gamma_1 (a_1)$. 
Let $\kappa\in\mathbb{R}$. 
If the perimeter $\sum_{i=1}^3 |b_i -a_i |$ of the geodesic triangle $\triangle$ is less than $2 D_{\kappa}$, then 
there exists a geodesic triangle 
$\triangle^{\kappa}=
(\gamma^{\kappa}_1 ,\gamma^{\kappa}_2 ,
\gamma^{\kappa}_3 )$, 
$\gamma^{\kappa}_i :\lbrack a_i,b_i \rbrack\to M_{\kappa}^2$ 
in $M_{\kappa}^2$. 
Such a geodesic triangle $\triangle^{\kappa}$ is unique up to isometry of $M_{\kappa}^2$. 
The geodesic triangle $\triangle$ is said to be {\em $\kappa$-thin} 
if $d_X (\gamma_i (s),\gamma_j (t))\leq d_{\kappa}(\gamma^{\kappa}_i (s),\gamma^{\kappa}_j (t))$ for 
any $i,j\in\{ 1,2,3\}$, any $s\in\lbrack a_i ,b_i \rbrack$, 
and any $t\in\lbrack a_j ,b_j \rbrack$.

\begin{definition}\label{CAT-def}
Let $\kappa\in\mathbb{R}$. 
A metric space $X$ is called a {\em $\mathrm{CAT}(\kappa )$ space}  
if $X$ is $D_{\kappa}$-geodesic, 
and any geodesic triangle in $X$ with 
perimeter$< 2D_{\kappa}$ 
is $\kappa$-thin. 
\end{definition}

By definition, $M_{\kappa}^2$ is a $\mathrm{CAT}(\kappa )$ space. 
It is easily observed that 
if $(X,d_X )$ is a $\mathrm{CAT}(\kappa )$ space, then 
for any $x,y\in X$ with $d_{X}(x,y)<D_{\kappa}$, there exists the unique geodesic segment with endpoints $x$ and $y$. 
Every $D_{\kappa}$-convex subset of a $\mathrm{CAT}(\kappa )$ space equipped with the induced metric is 
a $\mathrm{CAT}(\kappa )$ space.

Suppose that $(X_1 ,d_1 )$ and $(X_2 ,d_2 )$ are metric spaces, and that 
$Z_1$ and $Z_2$ are closed subsets of $X_1$ and $X_2$, respectively. 
Suppose further that $Z_1$ and $Z_2$ are isometric via an isometry $f:Z_1 \to Z_2$. 
We denote by $X_1 \sqcup X_2$ the disjoint union of $X_1$ and $X_2$. 
For $x,y\in X_1 \sqcup X_2$, define $d_0 (x,y)\in\lbrack 0,\infty )$ by 
\begin{equation*}
d_0 (x,y)
=
\begin{cases}
&d_1 (x,y),\quad\textrm{if }x,y\in X_1 ,\\
&d_2 (x,y),\quad\textrm{if }x,y\in X_2 ,\\
&\min_{z\in Z_1}\{ d_1 (x ,z)+d_2 (f(z),y)\} ,\quad\textrm{if }x\in X_1 , y\in X_2 ,\\
&\min_{z\in Z_1}\{ d_1 (y ,z)+d_2 (f(z),x)\} ,\quad\textrm{if }x\in X_2 , y\in X_1 .
\end{cases}
\end{equation*}
Then $d_0$ is a semi-metric on $X_1 \sqcup X_2$. 
In other words, $d_0$ satisfies the axioms for a metric except 
the requirement that $d_0 (x,y)=0$ implies $x=y$. 
Define a relation $\sim$ on $X_1 \sqcup X_2$ by 
declaring $x\sim y$ if and only if $d_0 (x,y)=0$. 
Then $\sim$ is an equivalence relation on $X_1 \sqcup X_2$, 
and the projection $\overline{d}_0$ of $d_0$ onto the quotient space $X_0 =(X_1 \sqcup X_2 )/\sim$ is well defined. 
It is easily observed that $(X_0 ,\overline{d}_0 )$ is a metric space, 
which is called the {\em gluing of $X_1$ and $X_2$ along $f$}. 
When two geodesic segments $\lbrack a,b\rbrack\subseteq X_1$ and $\lbrack c,d\rbrack\subseteq X_2$ are isometric, 
we mean by ``the metric space obtained by gluing $X_1$ and $X_2$ {\em by identifying $\lbrack a,b\rbrack$ with $\lbrack c,d\rbrack$}"
the gluing of $X_1$ and $X_2$ along the isometry $f:\lbrack a,b\rbrack\to\lbrack c,d\rbrack$ with $f(a)=c$ and $f(b)=d$. 

If $X_1$ and $X_2$ are complete locally compact $\mathrm{CAT}(\kappa )$ spaces, 
$Z_1$ and $Z_2$ are closed $D_{\kappa}$-convex subsets of $X_1$ and $X_2$, respectively, 
and $f:Z_1 \to Z_2$ is an isometry, 
then by Reshetnyak's gluing theorem, 
the gluing of $X_1$ and $X_2$ along 
$f$ becomes a $\mathrm{CAT}(\kappa )$ space. 
For a proof of this fact, see \cite{R}, \cite[Theorem 9.1.21]{BBI} or \cite[ChapterII, Theorem 11.1]{BH}.

\section{Angle measure in $M_{\kappa}^2$}\label{angle-sec}

In this section, we recall and organize several facts about angle measure in $M_{\kappa}^2$. 
We will use these facts mainly in the next section to prove a generalization of Alexandrov's lemma. 

We start with the following three propositions, 
which are all well-known. 

\begin{proposition}\label{angle-triangle-inequality-prop}
Let $\kappa\in\mathbb{R}$. 
Suppose $o,x,y,z \in M_{\kappa}^2$ are points such that 
$0<d_{\kappa}(o,a)<D_{\kappa}$ 
for every $a\in\{ x,y,z\}$. 
Then $\angle xoz\leq\angle xoy +\angle yoz$. 
\end{proposition}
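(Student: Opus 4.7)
The plan is to reduce the desired inequality to the triangle inequality for the standard distance $d_{S^1}$ on the circle $\mathbb{R}/2\pi\mathbb{Z}$. Because $\angle aob=\angle a'ob'$ whenever $a'\in R_{oa}\setminus\{o\}$ and $b'\in R_{ob}\setminus\{o\}$ (a fact recalled in the preliminaries), I may fix any $0<r<D_{\kappa}$ and replace $x$, $y$, $z$ by the unique points $x'$, $y'$, $z'$ on $R_{ox}$, $R_{oy}$, $R_{oz}$ at distance $r$ from $o$. All three relevant angles are preserved, and $x',y',z'$ now lie on the geodesic circle $S_r(o)=\{p\in M_{\kappa}^2:d_{\kappa}(o,p)=r\}$.

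Next I would invoke the rotational symmetry of $M_{\kappa}^2$ about $o$: the stabilizer of $o$ in the isometry group of $M_{\kappa}^2$ is a one-parameter group acting simply transitively on the set of rays from $o$. Choosing a basepoint and orientation gives a parameterization $\phi:\mathbb{R}/2\pi\mathbb{Z}\to S_r(o)$ equivariant under this action, meaning rotation by $\alpha$ sends $\phi(\theta)$ to $\phi(\theta+\alpha)$. Working in geodesic polar coordinates around $o$, a direct computation from the law of cosines stated earlier in the excerpt then yields
\begin{equation*}
\angle\phi(\theta_1)o\phi(\theta_2)=\min(|\theta_1-\theta_2|,\,2\pi-|\theta_1-\theta_2|)=d_{S^1}(\theta_1,\theta_2),
\end{equation*}
regardless of the sign of $\kappa$.

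Let $\theta_{x'},\theta_{y'},\theta_{z'}\in\mathbb{R}/2\pi\mathbb{Z}$ be the parameters of $x',y',z'$. The desired inequality $\angle xoz\leq\angle xoy+\angle yoz$ then becomes the triangle inequality
\begin{equation*}
d_{S^1}(\theta_{x'},\theta_{z'})\leq d_{S^1}(\theta_{x'},\theta_{y'})+d_{S^1}(\theta_{y'},\theta_{z'}),
\end{equation*}
which is elementary. The main obstacle is the computation identifying $\angle\phi(\theta_1)o\phi(\theta_2)$ with $d_{S^1}(\theta_1,\theta_2)$; although routine, it requires a case split on the sign of $\kappa$ and some care with the branch of $\min$ in the definition of $d_{S^1}$. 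Beyond this identification, the argument is entirely formal, and an alternative route would be to bypass the symmetry argument and verify the inequality directly via the spherical/hyperbolic law of cosines, reducing the claim to a trigonometric identity in three variables.
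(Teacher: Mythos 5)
The paper offers no proof of this proposition: it is listed, together with Propositions \ref{angle-equality-necessary-conditions-prop} and \ref{angle-equality-prop}, as one of three well-known facts from which the remaining statements of Section \ref{angle-sec} are derived. So there is no argument of the author's to compare against, and your proof must be judged on its own terms. It is correct, and it is the standard argument: normalize $x,y,z$ to a metric circle $S_r(o)$ (legitimate by the ray-invariance of angles recalled in Section \ref{preliminaries-sec}), identify the angle at $o$ with the arc-length metric on the circle of directions, and quote the triangle inequality for $d_{S^1}$.

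The one substantive step you defer, namely $\angle\phi(\theta_1)o\phi(\theta_2)=d_{S^1}(\theta_1,\theta_2)$, is indeed routine, but one caution: it would be circular to obtain $d_{\kappa}(\phi(\theta_1),\phi(\theta_2))$ from the law of cosines ``with angle $\theta_1-\theta_2$ at $o$,'' since that presupposes exactly the identification being proved. The distance must be computed independently, most easily in the ambient quadric models; for instance, for $\kappa>0$ one gets $\cos\left(\sqrt{\kappa}\,d_{\kappa}(\phi(\theta_1),\phi(\theta_2))\right)=\cos^2\left(\sqrt{\kappa}r\right)+\sin^2\left(\sqrt{\kappa}r\right)\cos(\theta_1-\theta_2)$ from the Euclidean inner product in $\mathbb{R}^3$, and substituting into the paper's law of cosines gives $\cos\angle\phi(\theta_1)o\phi(\theta_2)=\cos(\theta_1-\theta_2)$; since both $\angle$ and $d_{S^1}$ take values in $\lbrack 0,\pi\rbrack$, where $\cos$ is injective, the branch of the minimum sorts itself out. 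The cases $\kappa=0$ and $\kappa<0$ are identical in structure. Note also that the paper's definition of the angle only constrains the two sides adjacent to $o$, so any fixed $r\in(0,D_{\kappa})$ works even when $\kappa>0$ and two of the normalized points are antipodal. With that computation supplied, your argument is complete; your suggested alternative of verifying the inequality directly from the three laws of cosines is also viable but messier.
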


\begin{proposition}\label{angle-equality-necessary-conditions-prop}
Let $\kappa\in\mathbb{R}$. 
Suppose $o,x,y,z \in M_{\kappa}^2$ are points such that 
$0<d_{\kappa}(o,a)<D_{\kappa}$ 
for every $a\in\{ x,y,z\}$. 
Assume that $\angle xoz =\angle xoy +\angle yoz$. 
Then all of the following conditions are true: 
\begin{itemize}
\item
$y$ and $z$ do not lie on opposite sides of $\ell (o,x)$. 
\item
$x$ and $y$ do not lie on opposite sides of $\ell (o,z)$. 
\item
$x$ and $z$ do not lie on the same side of $\ell (o,y)$. 
\end{itemize}
\end{proposition}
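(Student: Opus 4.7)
The plan is to pass to the tangent space $T_o M_\kappa^2$ at $o$. Write $u_x, u_y, u_z \in S^1 \subset T_o M_\kappa^2$ for the initial unit tangent vectors at $o$ of the (unique) geodesic segments from $o$ to $x$, $y$, $z$, respectively. The Riemannian angle at $o$ between two geodesics agrees with the angle defined by the law of cosines, so the quantities $\angle xoy$, $\angle yoz$, $\angle xoz$ are precisely the arc-length distances between the corresponding unit vectors on the unit circle $S^1$. The hypothesis $\angle xoz = \angle xoy + \angle yoz$ is therefore the equality case of the triangle inequality on $S^1$, which forces $u_y$ to lie on a minor arc from $u_x$ to $u_z$.

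The next ingredient is a correspondence between sides of the line $\ell(o,c) \subset M_\kappa^2$ and sides of the one-dimensional subspace $\mathrm{span}(u_c) \subset T_o M_\kappa^2$. Concretely, for any $p$ with $0 < d_\kappa(o,p) < D_\kappa$ and $p \notin \ell(o,c)$, the point $p$ lies on the side of $\ell(o,c)$ determined by the open half-plane of $T_o M_\kappa^2$ that contains $u_p$. This holds because the geodesic ray from $o$ in direction $u_p$, restricted to $[0, D_\kappa)$, meets $\ell(o,c)$ only at $o$: for $\kappa \leq 0$ this is immediate, and for $\kappa > 0$ it follows from the fact that two distinct great circles through $o$ intersect only at $o$ and at its antipode, which lies at distance $D_\kappa$ from $o$.

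With these two reductions the three conclusions become short statements on $S^1$. Parametrizing $S^1$ by angle so that $u_x$ is at $0$ and $u_z$ at $\alpha \in [0,\pi]$, the positioning of $u_y$ on the arc from $0$ to $\alpha$ immediately gives: $u_y$ and $u_z$ do not lie in opposite open half-planes bounded by $\mathrm{span}(u_x)$; $u_x$ and $u_y$ do not lie in opposite open half-planes bounded by $\mathrm{span}(u_z)$; and $u_x$ and $u_z$ lie in opposite open half-planes bounded by $\mathrm{span}(u_y)$ (in particular, not on the same side). Translating back via the correspondence above yields the three sidedness conclusions of the statement.

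The main obstacle will be keeping track of degenerate configurations in which the above dichotomy is vacuous — in particular when two of $u_x, u_y, u_z$ coincide or are antipodal (so that one of the points lies on some $\ell(o,c)$, making the corresponding side-of-$\ell$ assertion automatic), and when $u_x$ and $u_z$ are antipodal (so the minor arc is not unique and no position of $u_y$ is singled out beyond what the angle hypothesis already forces). Each such case can be dispatched directly from the definitions, but they must be enumerated separately to ensure each of the three conclusions holds in every configuration.
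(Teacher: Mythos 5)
The paper does not actually prove this proposition: it is one of three facts about angle measure in $M_{\kappa}^2$ that the author explicitly labels ``well-known'' and states without proof, so there is no argument of the paper's to compare yours against. Judged on its own terms, your plan is correct and would yield a complete, self-contained proof. The two substantive ingredients are both sound: (i) the law-of-cosines angle at $o$ in $M_{\kappa}^2$ coincides with the Riemannian angle between initial directions, so the hypothesis $\angle xoz=\angle xoy+\angle yoz$ is exactly the equality case of the triangle inequality for arc-length distance on the unit circle in $T_oM_{\kappa}^2$, forcing $u_y$ onto a minimizing arc from $u_x$ to $u_z$; and (ii) a point $p$ with $0<d_{\kappa}(o,p)<D_{\kappa}$ and $p\notin\ell(o,c)$ lies on the side of $\ell(o,c)$ corresponding to the open half-plane of $\mathrm{span}(u_c)$ containing $u_p$ --- your justification via the punctured ray staying in one component is right, and the $\kappa>0$ case is correctly handled by noting that the second intersection of two great circles is the antipode, at distance exactly $D_{\kappa}$, hence out of reach. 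With $u_x$ at angle $0$, $u_z$ at $\alpha\in[0,\pi]$, and $u_y$ at $\beta\in[0,\alpha]$, the three sidedness claims reduce to elementary interval checks, and every degenerate configuration (two directions equal or antipodal, $\alpha=\pi$, a point lying on one of the lines) makes the relevant conclusion vacuous or is absorbed by the closed-half-plane version of the same check. You have correctly identified that these cases are the only loose ends; they are routine, so the proposal stands as a valid (and arguably more systematic) route than the purely synthetic case analysis one would otherwise write down.
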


\begin{proposition}\label{angle-equality-prop}
Let $\kappa\in\mathbb{R}$. 
Suppose $o,x,y,z\in M_{\kappa}^2$ are points such that 
$0<d_{\kappa}(o,a)<D_{\kappa}$ 
for every $a\in\{ x,y,z\}$. 
Then the identity 
$\angle xoz=\angle xoy+\angle yoz$ holds if and only if 
$y$ and $z$ do not lie on opposite sides of $\ell (o,x)$, and 
$\angle xoy\leq\angle xoz$. 
Assume in addition that $0<\angle xoz$. 
Then the identity 
$\angle xoz=\angle xoy+\angle yoz$ holds if and only if 
$y$ is neither on the opposite side of $\ell (o,z)$ from $x$, nor 
on the opposite side of $\ell (o,x)$ from $z$. 
\end{proposition}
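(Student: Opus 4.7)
The plan is to identify the rays from $o$ with a circle of circumference $2\pi$, under which angle measure becomes arc distance, and then reduce both biconditionals to elementary arc arithmetic.

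Because $M_\kappa^2$ is a two-dimensional model space, the rays from $o$ admit a parametrization by $S^1 = \mathbb{R}/2\pi\mathbb{Z}$, say $\theta \mapsto R_\theta$, satisfying: (i) for any two rays $R_{oa}$, $R_{ob}$, the angle $\angle aob$ equals the arc distance $\min(|\theta_a-\theta_b|,\, 2\pi - |\theta_a-\theta_b|)$; (ii) opposite rays correspond to angles differing by $\pi$; and (iii) for any line $\ell(o,a)$, the two sides of $\ell(o,a)$ correspond to the two open half-circles into which $\{\theta_a, \theta_a+\pi\}$ separates $S^1$. Setting up this dictionary rigorously from the elementary definitions the paper has adopted (rays, opposite rays, sides of lines, and the law of cosines) is routine but somewhat technical, and it constitutes the main obstacle in the proof.

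For the first biconditional, the forward direction is immediate: the first bullet of Proposition \ref{angle-equality-necessary-conditions-prop} gives that $y$ and $z$ lie on the same closed side of $\ell(o,x)$, and $\angle xoy\leq\angle xoz$ follows from $\angle yoz\geq 0$. For the reverse direction, I would set $\theta_x=0$ and, after possibly reflecting $M_\kappa^2$ through $\ell(o,x)$, take the representative $\theta_z\in[0,\pi]$. The hypothesis that $y$ and $z$ are not on opposite sides of $\ell(o,x)$ then forces the representative $\theta_y\in[0,\pi]$, and $\angle xoy\leq\angle xoz$ translates to $\theta_y\leq\theta_z$. Since both lie in $[0,\pi]$, one reads off $\angle yoz=\theta_z-\theta_y$, yielding $\angle xoy+\angle yoz=\theta_z=\angle xoz$ as required.

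For the second biconditional (assuming $0<\angle xoz$), the forward direction is immediate from the first two bullets of Proposition \ref{angle-equality-necessary-conditions-prop}. For the reverse direction, I would show that the two hypotheses imply $\angle xoy\leq\angle xoz$ and then invoke the first part. With $\theta_x=0$ and $\theta_z\in(0,\pi]$, the condition that $y$ is not on the opposite side of $\ell(o,x)$ from $z$ forces $\theta_y\in[0,\pi]$, while the condition that $y$ is not on the opposite side of $\ell(o,z)$ from $x$ forces $\theta_y\in[\theta_z-\pi,\theta_z]$ modulo $2\pi$; intersecting yields $\theta_y\in[0,\theta_z]$, so $\angle xoy=\theta_y\leq\theta_z=\angle xoz$. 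The degenerate case $\theta_z=\pi$, where $\ell(o,x)=\ell(o,z)$ and one of the side conditions becomes vacuous, is handled trivially, since then $\angle xoz=\pi$ already bounds $\angle xoy$ from above.
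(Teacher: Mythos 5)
The paper does not prove this statement at all: it is one of the three propositions at the start of Section 3 that the author declares ``well-known'' and then uses as black boxes to derive everything else in that section. So there is no argument of the paper's to compare yours against; any self-contained proof is a genuine addition. Your reduction to arc arithmetic on the circle of directions at $o$ is the standard way to establish these facts, and the logical skeleton is right: the forward directions correctly fall out of Proposition \ref{angle-equality-necessary-conditions-prop} together with $\angle yoz\geq 0$, and the reverse direction of the second biconditional correctly reduces to the first via the intersection $[0,\theta_z]=[0,\pi]\cap[\theta_z-\pi,\theta_z]$, with the degenerate case $\ell(o,x)=\ell(o,z)$ handled separately.

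Two caveats. First, as you acknowledge, essentially all of the content sits in the dictionary (i)--(iii); item (iii) is the one that needs actual care for $\kappa>0$, where you must check that a geodesic segment of length less than $D_{\kappa}$ issuing from $o$ meets the great circle $\ell(o,a)$ again only at the antipode of $o$, which is out of range, so the segment minus $\{o\}$ stays in one open hemisphere. You should also take care not to invoke Corollary \ref{angle-equality-coro} or anything later in Section 3, since all of that is derived from the present proposition. Second, in the reverse direction of the first biconditional, your assertion that the no-opposite-sides hypothesis forces $\theta_y\in[0,\pi]$ fails precisely when $z\in\ell(o,x)$ (so $\theta_z\in\{0,\pi\}$ and the hypothesis is vacuous): for $\theta_z=\pi$ one can have $\theta_y\in(-\pi,0)$, and your formula $\angle yoz=\theta_z-\theta_y$ then overshoots $\pi$. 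The conclusion still holds (both sides equal $\pi$), and the fix is the same extra reflection normalizing $\theta_y$ that you already deploy for the degenerate case of the second biconditional; but as written this subcase is not covered. Neither issue is a flaw in the approach, only in the level of detail.
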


The following corollary follows immediately from the first part of Proposition \ref{angle-equality-prop}. 

\begin{corollary}\label{angle-equality-coro}
Let $\kappa\in\mathbb{R}$. 
Suppose $o,x,y,z\in M_{\kappa}^2$ are points such that 
$0<d_{\kappa}(o,a)<D_{\kappa}$ 
for every $a\in\{ x,y,z\}$. 
If $\angle xoy =0$ or $\angle xoz=\pi$, then 
$\angle xoz=\angle xoy +\angle yoz$. 
\end{corollary}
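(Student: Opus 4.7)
The plan is to apply the first part of Proposition \ref{angle-equality-prop} directly, reducing the statement to checking two conditions in each of the two cases: (i) $y$ and $z$ do not lie on opposite sides of $\ell(o,x)$, and (ii) $\angle xoy \leq \angle xoz$. The essential input from the preliminary discussion is the dichotomy recalled just before Lemma \ref{law-of-cos-lemma}: $\angle xoy = 0$ iff $y \in R_{ox}$, and $\angle xoy = \pi$ iff $y \in \overline{R_{ox}}$. Both rays lie inside $\ell(o,x)$, and the two sides of $\ell(o,x)$ are by definition the two connected components of $M_{\kappa}^2 \setminus \ell(o,x)$, so any point of $\ell(o,x)$ belongs to neither side.

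First I would handle the case $\angle xoy = 0$. Then $y \in R_{ox} \subset \ell(o,x)$, so $y$ lies on $\ell(o,x)$ itself; in particular $y$ is not in either side, so $y$ and $z$ cannot lie on opposite sides of $\ell(o,x)$. Condition (ii) is automatic since $\angle xoy = 0 \leq \angle xoz$. The first part of Proposition \ref{angle-equality-prop} then yields $\angle xoz = \angle xoy + \angle yoz$.

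Next I would treat the case $\angle xoz = \pi$. Here $z \in \overline{R_{ox}} \subset \ell(o,x)$, so by the same reasoning $z$ lies on $\ell(o,x)$ and hence in neither side, giving condition (i). Condition (ii) holds because every angle measure lies in $[0,\pi]$, so $\angle xoy \leq \pi = \angle xoz$. Applying the first part of Proposition \ref{angle-equality-prop} once more completes the argument.

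There is no real obstacle here; the only subtle point is the convention that points on the line $\ell(o,x)$ are considered to be on neither side, which makes the ``not on opposite sides'' hypothesis vacuously satisfied whenever one of the points involved collapses onto $\ell(o,x)$. Once this is noted, the corollary is immediate.
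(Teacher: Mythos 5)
Your proof is correct and follows the same route as the paper, which simply states that the corollary ``follows immediately from the first part of Proposition \ref{angle-equality-prop}''; you have filled in exactly the verification that is left implicit there (the ray characterizations of the angle values $0$ and $\pi$, and the convention that points of $\ell(o,x)$ lie on neither side). No issues.
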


In the rest of this section, we recall several more facts about angle measure in $M_{\kappa}^2$. 
Although all of them are also well-known, we will prove them by using the above three propositions for completeness. 

\begin{proposition}\label{diagonal-angle-prop}
Let $\kappa\in\mathbb{R}$. 
Suppose $o,x,y,z\in M_{\kappa}^2$ are points such that 
$0<d_{\kappa}(o,a)<D_{\kappa}$ for every $a\in\{ x,y,z\}$, and $d_{\kappa}(x,z)<D_{\kappa}$. 
If $\lbrack x,z\rbrack\cap\lbrack o,y\rbrack\neq\emptyset$, then 
\begin{equation*}
\angle xoz
=
\angle xoy +\angle yoz.
\end{equation*}
\end{proposition}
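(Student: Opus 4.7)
My plan is to pick $p \in [x,z] \cap [o,y]$ and reduce to verifying the side-based criterion in Proposition \ref{angle-equality-prop}. If $p = o$, then $o \in [x,z]$ and $\angle xoz = \pi$; in that case Corollary \ref{angle-equality-coro} immediately gives the desired equality. Otherwise $p \in [o,y] \setminus \{o\} \subseteq R_{oy}$, so $\angle xop = \angle xoy$ and $\angle poz = \angle yoz$ by the constancy of angle measure along rays recalled in Section \ref{preliminaries-sec}, and it suffices to show $\angle xoz = \angle xop + \angle poz$. Two further degenerate possibilities are routine: if $\angle xoz = \pi$, Corollary \ref{angle-equality-coro} applies again; if $\angle xoz = 0$, then $z \in R_{ox}$ and, since $d_\kappa(x,z) < D_\kappa$, the unique segment $[x,z]$ lies in $R_{ox}$, so $p \in R_{ox}$ and both $\angle xop$ and $\angle poz$ vanish.

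In the remaining case $0 < \angle xoz < \pi$, both $z \notin \ell(o,x)$ and $x \notin \ell(o,z)$, so each lies strictly on one side of the relevant line. I will apply the second form of Proposition \ref{angle-equality-prop}, which asks that $p$ avoid the side of $\ell(o,x)$ opposite $z$ and the side of $\ell(o,z)$ opposite $x$. The plan is to prove $\ell(o,x) \cap [x,z] = \{x\}$; then $[x,z] \setminus \{x\}$ is a connected subset of $M_\kappa^2 \setminus \ell(o,x)$ containing $z$, so it lies entirely on the same side as $z$, placing $p$ either at $x$ or strictly on the $z$-side, as required. The symmetric argument disposes of the condition on $\ell(o,z)$.

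The main technical obstacle is verifying $\ell(o,x) \cap [x,z] = \{x\}$ when $\kappa > 0$: here $\ell(o,x)$ and $\ell(x,z)$ are distinct great circles meeting in two antipodal points, namely $x$ and its antipode $x^{*}$. The point $x^{*}$ satisfies $d_\kappa(x, x^{*}) = D_\kappa$, while every point of $[x,z]$ lies at distance at most $d_\kappa(x,z) < D_\kappa$ from $x$, so $x^{*} \notin [x,z]$, completing the claim. For $\kappa \le 0$, the uniqueness of geodesics between distinct points at distance less than $D_\kappa$ makes the identity $\ell(o,x) \cap [x,z] = \{x\}$ immediate.
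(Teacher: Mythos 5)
Your proof is correct and follows essentially the same route as the paper: pick $p\in\lbrack x,z\rbrack\cap\lbrack o,y\rbrack$, dispose of the degenerate angles via Corollary \ref{angle-equality-coro} (or directly when $\angle xoz=0$), and reduce the main case to the side conditions of Proposition \ref{angle-equality-prop}. The only difference is in verifying those side conditions, where the paper simply invokes the $D_{\kappa}$-convexity of the union of $\ell(o,z)$ with one of its sides to place all of $\lbrack x,z\rbrack$ in the correct closed half-plane, whereas you reach the same conclusion by the slightly longer (but valid) argument that $\ell(o,x)\cap\lbrack x,z\rbrack=\{x\}$ together with connectedness of $\lbrack x,z\rbrack\setminus\{x\}$.
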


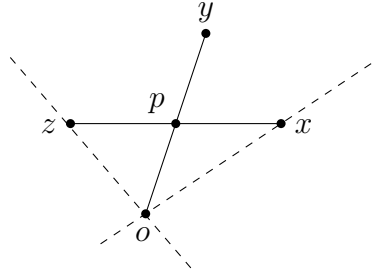
\begin{figure}[htbp]
\centering\begin{tikzpicture}[scale=0.4]
\draw[dashed] (-2,5.2) -- (4,-1.8);
\draw[dashed] (1,-1) -- (10,5);
\draw (0,3) -- (7,3);
\draw (2.5,0) -- (4.5,6);
\node [right] at (7.1,2.9) {$x$};
\node [above] at (4.5,6) {$y$};
\node [below] at (2.4,-0.1) {$o$};
\node [left] at (-0.1,2.9) {$z$};
\fill (3.5,3) circle [radius=0.15];
\fill (0,3) circle [radius=0.15];
\fill (7,3) circle [radius=0.15];
\fill (2.5,0) circle [radius=0.15];
\fill (4.5,6) circle [radius=0.15];
\node [above left] at (3.5,3) {$p$};
\end{tikzpicture}
\caption{Proof of Proposition \ref{diagonal-angle-prop}.}
\end{figure}

\begin{proof}
If $o\in\lbrack x,z\rbrack$, then $\angle xoz=\pi$, and therefore we have 
$\angle xoz
=
\angle xoy +\angle yoz$ 
by Corollary \ref{angle-equality-coro}. 
So henceforth we assume that $o\not\in\lbrack x,z\rbrack$. 
Then there exists a point $p\in\lbrack x,z\rbrack\cap (o,y\rbrack$ by hypothesis. 
Because 
the union of $\ell (o,z)$ and any one of its sides 
is $D_{\kappa}$-convex, 
the fact that $p\in\lbrack x,z\rbrack$ implies that 
$p$ is not on the opposite side of $\ell (o,z)$ from $x$. 
Similarly, $p$ is not on the opposite side of $\ell (o,x)$ from $z$. 
Therefore, if $0<\angle xoz$, then Proposition \ref{angle-equality-prop} implies that 
\begin{equation*}
\angle xoz
=\angle xop +\angle poz
=\angle xoy +\angle yoz.
\end{equation*}
So we assume in addition that $\angle xoz= 0$. 
Then $x,z\in R_{ox}\setminus\{ o\}$. 
Therefore, 
the hypothesis that $\lbrack x,z\rbrack\cap\lbrack o,y\rbrack\neq\emptyset$ implies 
$y\in R_{ox}\setminus\{ o\}$ clearly. 
It follows that 
$\angle xoy=\angle yoz=0$, and thus 
$\angle xoz=\angle xoy +\angle yoz$, which completes the proof. 
\end{proof}

The following corollary is an immediate consequence of Proposition \ref{diagonal-angle-prop}.

\begin{corollary}\label{diagonal-angle-coro}
Let $\kappa\in\mathbb{R}$. 
Suppose $o,x,y,z\in M_{\kappa}^2$ are points such that 
$0<d_{\kappa}(o,a)<D_{\kappa}$ for every $a\in\{ x,y,z\}$, and $d_{\kappa}(x,z)<D_{\kappa}$. 
If $\lbrack x,z\rbrack\cap\lbrack o,y\rbrack\neq\emptyset$, then 
$\angle xoy +\angle yoz\leq\pi$. 
\end{corollary}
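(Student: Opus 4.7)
The plan is to observe that this corollary really is a one-line consequence of Proposition \ref{diagonal-angle-prop}, exploiting only the fact that every angle measure in $M_\kappa^2$ lies in the interval $[0,\pi]$ by definition.

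More precisely, the hypotheses of Corollary \ref{diagonal-angle-coro} are exactly the hypotheses of Proposition \ref{diagonal-angle-prop}: the points $o,x,y,z$ satisfy $0<d_\kappa(o,a)<D_\kappa$ for $a\in\{x,y,z\}$, we have $d_\kappa(x,z)<D_\kappa$, and the segments $[x,z]$ and $[o,y]$ meet. Therefore the proposition applies and gives the equality $\angle xoz=\angle xoy+\angle yoz$. Since $\angle xoz\in[0,\pi]$ by the convention adopted when angle measure in $M_\kappa^2$ was defined (just after Lemma \ref{law-of-cos-lemma} in the paper), we immediately conclude $\angle xoy+\angle yoz\leq\pi$, which is the desired bound.

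There is essentially no obstacle here; the only thing to check is that the hypotheses of Corollary \ref{diagonal-angle-coro} match those of Proposition \ref{diagonal-angle-prop} verbatim, and that the target inequality is obtained by combining the proposition's equality with the universal bound $\angle xoz\leq\pi$. I would write the proof simply as: \emph{By Proposition \ref{diagonal-angle-prop}, $\angle xoy+\angle yoz=\angle xoz\leq\pi$.}
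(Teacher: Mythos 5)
Your proof is correct and is exactly the argument the paper intends: the paper states Corollary \ref{diagonal-angle-coro} as an immediate consequence of Proposition \ref{diagonal-angle-prop}, and your one-line deduction $\angle xoy+\angle yoz=\angle xoz\leq\pi$ (using that angle measure is valued in $\lbrack 0,\pi\rbrack$) is precisely that consequence.
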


We will continue to prove facts about angle measure in $M_{\kappa}^2$. 

\begin{proposition}\label{angle-betweenness-prop}
Let $\kappa\in\mathbb{R}$. 
Suppose $o,x,y,z,w\in M_{\kappa}^2$ are points such that 
$0<d_{\kappa}(o,a)<D_{\kappa}$ 
for every $a\in\{ x,y,z,w\}$. 
If we have 
\begin{align}
\angle xoz&=\angle xoy+\angle yoz,\label{angle-betweeness-xoz-hyp}\\
\angle xow&=\angle xoz +\angle zow,\label{angle-betweeness-xow-hyp}
\end{align}
then we have 
\begin{equation*}
\angle yow=\angle yoz +\angle zow,\quad
\angle xow=\angle xoy+\angle yow.
\end{equation*}
\end{proposition}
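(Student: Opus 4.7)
The plan is to obtain both conclusions in a single short pinching argument driven entirely by the triangle inequality for angles (Proposition \ref{angle-triangle-inequality-prop}). Because the hypothesis ensures $0<d_{\kappa}(o,a)<D_{\kappa}$ for each $a\in\{x,y,z,w\}$, that proposition applies to the triples $(o;y,z,w)$ and $(o;x,y,w)$, yielding
\[
\angle yow\leq \angle yoz+\angle zow\qquad\text{and}\qquad \angle xow\leq \angle xoy+\angle yow.
\]

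Next, I would feed the hypotheses \eqref{angle-betweeness-xoz-hyp} and \eqref{angle-betweeness-xow-hyp} into the chain
\[
\angle xow=\angle xoz+\angle zow=\angle xoy+\angle yoz+\angle zow.
\]
Adding $\angle xoy$ to both sides of the first triangle inequality above then gives
\[
\angle xoy+\angle yow\leq \angle xoy+\angle yoz+\angle zow=\angle xow,
\]
and comparing with the second triangle inequality $\angle xow\leq\angle xoy+\angle yow$ forces equality throughout. The equality $\angle xow=\angle xoy+\angle yow$ is exactly the second conclusion, and subtracting $\angle xoy$ from the equality $\angle xoy+\angle yow=\angle xoy+\angle yoz+\angle zow$ immediately yields the first conclusion $\angle yow=\angle yoz+\angle zow$.

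I do not foresee any substantive obstacle. The proposition is essentially a formal consequence of the angle triangle inequality: the two hypothesized betweenness relations supply exactly the arithmetic identity needed to pinch each inequality into an equality, and since all angle measures lie in $[0,\pi]$ the subtraction of $\angle xoy$ causes no issue. Notably, the delicate side-of-line considerations of Proposition \ref{angle-equality-prop} and Proposition \ref{angle-equality-necessary-conditions-prop} are not required for this step.
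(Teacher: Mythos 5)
Your proof is correct, and it takes a genuinely different and substantially more elementary route than the paper's. The paper proves this proposition by a geometric argument: after disposing of the degenerate cases $\angle xoy=0$ or $\angle xow=\pi$ via Corollary \ref{angle-equality-coro}, it shows that $y$ and $w$ lie on the same side of $\ell(o,x)$ as $z$, that $y$ and $w$ are separated by $\ell(o,z)$, produces an intersection point $p\in\lbrack y,w\rbrack\cap R_{oz}$, and then invokes Proposition \ref{diagonal-angle-prop} to get $\angle yow=\angle yoz+\angle zow$. Your argument replaces all of this with a pinching computation: the two hypotheses give $\angle xow=\angle xoy+\angle yoz+\angle zow$, while Proposition \ref{angle-triangle-inequality-prop} gives $\angle xow\leq\angle xoy+\angle yow\leq\angle xoy+\angle yoz+\angle zow$, so both inequalities collapse to equalities, which are exactly the two conclusions. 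This needs no case analysis and no side-of-line bookkeeping, and it would work in any setting where the angle satisfies the triangle inequality at a point (e.g.\ for upper angles in general metric spaces); the paper's route yields as by-products some positional information (e.g.\ that $\lbrack y,w\rbrack$ meets $R_{oz}$), but that extra information is not part of the statement and is not what is used when the proposition is applied later, so your shorter proof would serve the paper equally well.
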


\begin{figure}[htbp]
\centering\begin{tikzpicture}[scale=0.4]
\draw[dashed] (-2,0) -- (12,0);
\draw[dashed] (9,2) -- (0,3);
\draw[dashed] (3.8,-1) -- (5.4,7);
\draw (4,0) -- (10,0);
\draw (4,0) -- (9,2);
\draw (4,0) -- (5,5);
\draw (4,0) -- (0,3);
\node [below left] at (4,0) {$o$};
\node [below] at (10,0) {$x$};
\node [right] at (9,2) {$y$};
\node [left] at (5,5) {$z$};
\node [left] at (0,3) {$w$};
\fill (4.5,2.5) circle [radius=0.15];
\fill (4,0) circle [radius=0.15];
\fill (10,0) circle [radius=0.15];
\fill (9,2) circle [radius=0.15];
\fill (5,5) circle [radius=0.15];
\fill (0,3) circle [radius=0.15];
\node [above left] at (4.5,2.5) {$p$};
\end{tikzpicture}
\caption{Proof of Proposition \ref{angle-betweenness-prop}.}
\end{figure}

\begin{proof}
First, assume that one of the angle measures 
$\angle xoy$, $\angle xoz$ or $\angle xow$ is $0$ or $\pi$. 
Then $\angle xoy=0$ or $\angle xow=\pi$ because 
\eqref{angle-betweeness-xoz-hyp} and 
\eqref{angle-betweeness-xow-hyp} imply 
$\angle xoy\leq\angle xoz\leq\angle xow$. 
Therefore, we have $\angle xow=\angle xoy+\angle yow$ by Corollary \ref{angle-equality-coro}. 
Together with \eqref{angle-betweeness-xoz-hyp} and \eqref{angle-betweeness-xow-hyp}, this implies that 
\begin{equation*}
\angle yow
=
\angle xow-\angle xoy
=
\angle xoz+\angle zow-\angle xoy
=
\angle yoz+\angle zow.
\end{equation*}
So henceforth we assume that 
\begin{equation}\label{angle-betweenness-prop-0pi-assumption}
\{\angle xoy,\angle xoz,\angle xow\}\cap\{ 0,\pi\} =\emptyset .
\end{equation}
Then neither $y$, $z$ nor $w$ is on $\ell (o,x)$. 
We denote by $S$ the side of $\ell (o,x)$ containing $z$. 
By Proposition \ref{angle-equality-necessary-conditions-prop}, 
\eqref{angle-betweeness-xoz-hyp} and \eqref{angle-betweeness-xow-hyp} imply that 
neither $y$ nor $w$ is on the opposite side of $\ell (o,x)$ from $z$, 
and therefore $y\in S$ and $w\in S$. 
In particular, it follows that $d_{\kappa}(y,w)<D_{\kappa}$. 
By Proposition \ref{angle-equality-necessary-conditions-prop}, 
\eqref{angle-betweeness-xoz-hyp} implies 
that $y$ is not on the opposite side of $\ell (o,z)$ from $x$, and 
\eqref{angle-betweeness-xow-hyp} implies 
that $w$ is not on the same side of $\ell (o,z)$ as $x$. 
Because $x\not\in\ell (o,z)$ by \eqref{angle-betweenness-prop-0pi-assumption}, 
it follows that $y$ and $w$ are not on the same side of $\ell (o,z)$. 
Therefore, there exits a point 
$p\in\lbrack y,w\rbrack\cap\ell (o,z)$. 
Because $R_{oz}\setminus\{ o\}=\ell (o,z)\cap S$, and $\lbrack y,w\rbrack\subseteq S$ by the convexity of $S$, 
we have $p\in R_{oz}\setminus\{ o\}$. 
Therefore, 
$\angle yop=\angle yoz$ and $\angle pow=\angle zow$. 
Because $\lbrack y,w\rbrack\cap\lbrack o,p\rbrack\neq\emptyset$, Proposition \ref{diagonal-angle-prop} implies that 
\begin{equation}\label{angle-betweenness-prop-yoz-zow-eq}
\angle yow=\angle yop+\angle pow=\angle yoz+\angle zow.
\end{equation}
Combining \eqref{angle-betweeness-xoz-hyp}, \eqref{angle-betweeness-xow-hyp} and \eqref{angle-betweenness-prop-yoz-zow-eq}, 
we obtain 
\begin{equation*}
\angle xow
=
\angle xoz+\angle zow
=
\angle xoy+\angle yoz+\angle zow
=
\angle xoy+\angle yow,
\end{equation*}
which completes the proof.  
\end{proof}

\begin{proposition}\label{triangle-convexhull-prop}
Let $\kappa\in\mathbb{R}$. 
Suppose $o,x,y,z\in M_{\kappa}^2$ are points such that $0<d_{\kappa}(o,a)<D_{\kappa}$ for every $a\in\{ x,y,z\}$, and 
$d_{\kappa}(y,o)+d_{\kappa}(o,z)<D_{\kappa}$. 
If $\pi\leq\angle yox+\angle xoz$, then 
$d_{\kappa}(y,o)+d_{\kappa}(o,z)\leq
d_{\kappa}(y,x)+d_{\kappa}(x,z)$.
\end{proposition}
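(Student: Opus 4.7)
The plan is to reduce the problem to a triangle inequality in a single comparison configuration built in $M_{\kappa}^2$. Let $\alpha=\angle yox$ and $\beta=\angle xoz$, so the hypothesis reads $\alpha+\beta\geq\pi$.

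First I would construct a model configuration $Y,O,Z\in M_{\kappa}^2$ with $O$ between $Y$ and $Z$ on a common line, such that $d_{\kappa}(Y,O)=d_{\kappa}(y,o)$ and $d_{\kappa}(O,Z)=d_{\kappa}(o,z)$. This is possible because the hypothesis $d_{\kappa}(y,o)+d_{\kappa}(o,z)<D_{\kappa}$ ensures $Z$ lies on $\overline{R_{OY}}$ with the segment $[Y,Z]$ realising the sum of the two distances. In particular, $\angle YOZ=\pi$ and $d_{\kappa}(Y,Z)=d_{\kappa}(y,o)+d_{\kappa}(o,z)$.

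Next I would place $X\in M_{\kappa}^2$ with $d_{\kappa}(O,X)=d_{\kappa}(o,x)$ and $\angle YOX=\alpha$. Since $\angle YOZ=\pi$, Corollary \ref{angle-equality-coro} gives
\[
\angle YOX+\angle XOZ=\angle YOZ=\pi,
\]
hence $\angle XOZ=\pi-\alpha\leq\beta=\angle xoz$, where the inequality is exactly the hypothesis $\alpha+\beta\geq\pi$. Now I would apply Lemma \ref{law-of-cos-lemma} twice: at the vertex $O$ (resp.\ $o$) with sides $d_{\kappa}(O,X)=d_{\kappa}(o,x)$ and $d_{\kappa}(O,Z)=d_{\kappa}(o,z)$, the inequality $\angle XOZ\leq\angle xoz$ yields $d_{\kappa}(X,Z)\leq d_{\kappa}(x,z)$; and the equality $\angle YOX=\angle yox$ with the matching side lengths $d_{\kappa}(Y,O)=d_{\kappa}(y,o)$, $d_{\kappa}(O,X)=d_{\kappa}(o,x)$ yields $d_{\kappa}(Y,X)=d_{\kappa}(y,x)$.

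Finally, the ordinary triangle inequality in $M_{\kappa}^2$ applied to $Y,X,Z$ gives
\[
d_{\kappa}(y,o)+d_{\kappa}(o,z)=d_{\kappa}(Y,Z)\leq d_{\kappa}(Y,X)+d_{\kappa}(X,Z)\leq d_{\kappa}(y,x)+d_{\kappa}(x,z),
\]
which is the desired conclusion. The only delicate point is to check that all side lengths appearing in the invocations of Lemma \ref{law-of-cos-lemma} are strictly between $0$ and $D_{\kappa}$, which follows immediately from the standing hypotheses $0<d_{\kappa}(o,a)<D_{\kappa}$ for $a\in\{x,y,z\}$ and the choice of $O,Y,Z,X$. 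I do not anticipate a serious obstacle; the main conceptual step is simply recognising that positioning $Y,Z$ on opposite rays through $O$ converts the angle-sum hypothesis into an angle comparison between $\angle XOZ$ and $\angle xoz$.
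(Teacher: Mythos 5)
Your proof is correct and is essentially the paper's own argument: the paper keeps $y,o,x$ fixed and replaces $z$ by the point $\tilde z\in\overline{R_{oy}}$ with $d_{\kappa}(o,\tilde z)=d_{\kappa}(o,z)$, which is exactly your configuration $Y,O,X,Z$ up to an isometry of $M_{\kappa}^2$. The only cosmetic difference is that building a fresh copy costs you one extra invocation of Lemma \ref{law-of-cos-lemma} to recover $d_{\kappa}(Y,X)=d_{\kappa}(y,x)$, which is automatic in the paper's in-place version.
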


\begin{figure}[htbp]
\centering\begin{tikzpicture}[scale=0.5]
\draw (0,0) -- (6,2);
\draw (0,0) -- (2,4);
\draw (2,4) -- (6,0);
\draw (3,1) -- (6,0);
\node [above] at (2,4) {$x$};
\node [left] at (0,0) {$y$};
\node [below] at (3,1) {$o$};
\node [right] at (6,0) {$z$};
\node [right] at (6,2) {$\tilde{z}$};
\end{tikzpicture}
\caption{Proof of Proposition \ref{triangle-convexhull-prop}.}\label{alexandrov-fig}
\end{figure}

\begin{proof}
Let $\tilde{z}\in M_{\kappa}^2$ be the unique point such that 
$d_{\kappa}(o,\tilde{z})=d_{\kappa}(o,z)$ and $\angle yo\tilde{z}=\pi$. 
Then 
\begin{equation*}
\angle yox +\angle xo\tilde{z}=\angle yo\tilde{z}=\pi
\end{equation*} 
by Corollary \ref{angle-equality-coro}. 
Combining this with the hypothesis that $\pi\leq\angle yox+\angle xoz$ yields 
\begin{equation*}
\angle xo\tilde{z}\leq\angle xoz,
\end{equation*}
which implies 
\begin{equation}\label{alexandrov-lemma-xtildez-xz-ineq}
d_{\kappa}(x,\tilde{z})\leq d_{\kappa}(x,z)
\end{equation}
by Lemma \ref{law-of-cos-lemma}. 
Because 
\begin{equation*}
d_{\kappa}(y,o)+d_{\kappa}(o,\tilde{z})=d_{\kappa}(y,o)+d_{\kappa}(o,z)<D_{\kappa}
\end{equation*}
by hypothesis, it follows from the definition of $\tilde{z}$ that $o\in\lbrack y,\tilde{z}\rbrack$, and therefore 
\begin{equation}\label{alexandrov-lemma-o-in-ytildez}
d_{\kappa}(y,o)+d_{\kappa}(o,\tilde{z})=d_{\kappa}(y,\tilde{z}). 
\end{equation}
Using \eqref{alexandrov-lemma-xtildez-xz-ineq}, \eqref{alexandrov-lemma-o-in-ytildez}, the definition of $\tilde{z}$, 
and the triangle inequality for $d_{\kappa}$, we obtain 
\begin{align*}
d_{\kappa}(y,o)+d_{\kappa}(o,z)
&=
d_{\kappa}(y,o)+d_{\kappa}(o,\tilde{z})
=
d_{\kappa}(y,\tilde{z})\\
&\leq
d_{\kappa}(y,x)+d_{\kappa}(x,\tilde{z})
\leq
d_{\kappa}(y,x)+d_{\kappa}(x,z),
\end{align*}
which completes the proof. 
\end{proof}

\begin{proposition}\label{triangle-convexhull-coro-prop}
Let $\kappa\in\mathbb{R}$. 
Suppose $o,x,y,z\in M_{\kappa}^2$ are points such that 
$o\not\in\{ x,y,z\}$, and 
$d_{\kappa}(x,y)+d_{\kappa}(y,o)+d_{\kappa}(o,z)+d_{\kappa}(z,x)<2D_{\kappa}$. 
If $\pi\leq\angle yox+\angle xoz$, 
then we have 
$d_{\kappa}(y,o)+d_{\kappa}(o,z)\leq
d_{\kappa}(y,p)+d_{\kappa}(p,z)$ 
for any $p\in\lbrack o,x\rbrack$. 
\end{proposition}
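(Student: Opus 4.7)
The plan is to reduce everything to Proposition \ref{triangle-convexhull-prop} applied with $x$ replaced by $p$. First observe the case $p = o$ is trivial (both sides are equal), so assume $p \in (o, x]$, i.e.\ $p \in R_{ox} \setminus \{o\}$. Then the ray-invariance of angle measure recalled in Section \ref{preliminaries-sec} gives $\angle yop = \angle yox$ and $\angle poz = \angle xoz$, so $\pi \leq \angle yop + \angle poz$. The required distance bounds $0 < d_\kappa(o,a) < D_\kappa$ for $a \in \{p, y, z\}$ follow from the hypothesis that $o \notin \{x, y, z\}$ and from the two triangle inequalities $d_\kappa(o, x) \leq d_\kappa(o, y) + d_\kappa(y, x)$ and $d_\kappa(o, x) \leq d_\kappa(o, z) + d_\kappa(z, x)$, which together with the perimeter hypothesis yield $d_\kappa(o, p) \leq d_\kappa(o, x) < D_\kappa$.

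The only nontrivial hypothesis of Proposition \ref{triangle-convexhull-prop} that remains to verify is $d_\kappa(y, o) + d_\kappa(o, z) < D_\kappa$; this is the main obstacle, since the perimeter bound only gives $d_\kappa(y, o) + d_\kappa(o, z) < 2D_\kappa$. I will establish it by contradiction, mimicking the construction used in the proof of Proposition \ref{triangle-convexhull-prop}. Suppose $d_\kappa(y, o) + d_\kappa(o, z) \geq D_\kappa$; then necessarily $\kappa > 0$. Let $\tilde{z} \in M_\kappa^2$ be the unique point with $\angle yo\tilde{z} = \pi$ and $d_\kappa(o, \tilde{z}) = d_\kappa(o, z)$, so $\tilde{z} \in \overline{R_{oy}}$. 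Corollary \ref{angle-equality-coro} applied to the triple $(y, x, \tilde{z})$ at $o$ gives $\pi = \angle yo\tilde{z} = \angle yox + \angle xo\tilde{z}$, whence
\begin{equation*}
\angle xo\tilde{z} = \pi - \angle yox \leq \angle xoz,
\end{equation*}
and Lemma \ref{law-of-cos-lemma} yields $d_\kappa(x, \tilde{z}) \leq d_\kappa(x, z)$.

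Since $y \in R_{oy}$ and $\tilde{z} \in \overline{R_{oy}}$ satisfy $d_\kappa(y, o) + d_\kappa(o, \tilde{z}) = d_\kappa(y, o) + d_\kappa(o, z) \geq D_\kappa$, the opposite-ray identity recalled in Section \ref{preliminaries-sec} gives $d_\kappa(y, o) + d_\kappa(o, \tilde{z}) + d_\kappa(\tilde{z}, y) = 2 D_\kappa$. Combining this with the triangle inequality $d_\kappa(y, \tilde{z}) \leq d_\kappa(y, x) + d_\kappa(x, \tilde{z}) \leq d_\kappa(y, x) + d_\kappa(x, z)$ gives
\begin{equation*}
2 D_\kappa \leq d_\kappa(x, y) + d_\kappa(y, o) + d_\kappa(o, z) + d_\kappa(z, x),
\end{equation*}
contradicting the perimeter hypothesis. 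Hence $d_\kappa(y, o) + d_\kappa(o, z) < D_\kappa$, and Proposition \ref{triangle-convexhull-prop} applied to $(o, p, y, z)$ produces the desired inequality $d_\kappa(y, o) + d_\kappa(o, z) \leq d_\kappa(y, p) + d_\kappa(p, z)$.
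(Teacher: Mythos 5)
Your proposal is correct and follows essentially the same route as the paper: both reduce to Proposition \ref{triangle-convexhull-prop} applied at $p\in R_{ox}\setminus\{o\}$ (using invariance of the angles along the ray), and both establish the crucial bound $d_{\kappa}(y,o)+d_{\kappa}(o,z)<D_{\kappa}$ via the same auxiliary point $\tilde{z}$ with $\angle yo\tilde{z}=\pi$, the comparison $d_{\kappa}(x,\tilde{z})\leq d_{\kappa}(x,z)$, and the contradiction with the perimeter hypothesis when $\kappa>0$. The only cosmetic difference is that you verify the bounds $0<d_{\kappa}(o,a)<D_{\kappa}$ by direct triangle inequalities where the paper invokes Proposition \ref{hemisphere-prop}.
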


\begin{proof}
Let $\tilde{z}\in M_{\kappa}^2$ be the point as in the proof of Proposition \ref{triangle-convexhull-prop}. 
Then the inequality \eqref{alexandrov-lemma-xtildez-xz-ineq} in the proof of Proposition \ref{triangle-convexhull-prop} holds 
by the same argument as in the proof of Proposition \ref{triangle-convexhull-prop}. 
By definition of $\tilde{z}$, we have $\tilde{z}\in\overline{R_{oy}}\setminus\{ o\}$. 
It follows that 
\begin{equation}\label{alexandrov-lemma-yo-otildez-ineq}
d_{\kappa}(y,o)+d_{\kappa}(o,\tilde{z})<D_{\kappa}
\end{equation}
because otherwise $\kappa$ would be greater than $0$, and 
$d_{\kappa}(y,o)+d_{\kappa}(o,\tilde{z})+d_{\kappa}(\tilde{z},y)$ would 
be equal to $2D_{\kappa}$, 
which would imply that 
\begin{align*}
d_{\kappa}(x,y)+d_{\kappa}(y,o)+d_{\kappa}(o,z)+d_{\kappa}(z,x)
&\geq
d_{\kappa}(x,y)+d_{\kappa}(y,o)+d_{\kappa}(o,\tilde{z})+d_{\kappa}(\tilde{z},x) \\
&\geq
d_{\kappa}(y,o)+d_{\kappa}(o,\tilde{z})+d_{\kappa}(\tilde{z},y)
=
2 D_{\kappa},
\end{align*}
contradicting the hypothesis. 
It follows from \eqref{alexandrov-lemma-yo-otildez-ineq} and the definition of $\tilde{z}$ that 
\begin{equation}\label{triangle-convexhull-coro-yo-oz-ineq}
d_{\kappa}(y,o)+d_{\kappa}(o,z)<D_{\kappa}.
\end{equation}
If $p=o$, then the desired inequality holds trivially, 
so assume that $p\neq o$. 
Then it follows from the hypothesis and Proposition \ref{hemisphere-prop} that 
\begin{equation}\label{triangle-convexhull-coro-eda-ineq}
0<d_{\kappa}(o,a)<D_{\kappa}
\end{equation}
for every $a\in\{ p,y,z\}$. 
Furthermore, we have 
\begin{equation}\label{triangle-convexhull-coro-yoppoz-ineq}
\angle yop+\angle poz=\angle yox+xoz\geq\pi
\end{equation}
by hypothesis.
It follows from 
\eqref{triangle-convexhull-coro-yo-oz-ineq}, \eqref{triangle-convexhull-coro-eda-ineq}, \eqref{triangle-convexhull-coro-yoppoz-ineq} 
and Proposition \ref{triangle-convexhull-prop} that 
\begin{equation*}
d_{\kappa}(y,o)+d_{\kappa}(o,z)\leq
d_{\kappa}(y,p)+d_{\kappa}(p,z),
\end{equation*}
which completes the proof. 
\end{proof}

\begin{proposition}\label{ray-prop}
Let $\kappa\in\mathbb{R}$. 
Suppose $o,x,y,z\in M_{\kappa}^2$ are points such that $0<d_{\kappa}(o,a)<D_{\kappa}$ for every $a\in\{ x,y,z\}$, and 
$d_{\kappa}(y,o)+d_{\kappa}(o,z)<D_{\kappa}$. 
If $\pi\leq\angle yox+\angle xoz$, and $y$ and $z$ do not lie on the same side of $\ell (o,x)$, 
then $\lbrack y,z\rbrack\cap\overline{R_{ox}}\neq\emptyset$. 
\end{proposition}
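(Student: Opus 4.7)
The plan is to reduce to the case where neither $y$ nor $z$ lies on $\ell(o,x)$, then locate the intersection of $[y,z]$ with $\ell(o,x)$ and argue by contradiction that it must lie on the opposite ray.

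First I would handle the degenerate boundary cases. If $y \in \ell(o,x)$ (the case $z \in \ell(o,x)$ is symmetric), then $y \in R_{ox}$ or $y \in \overline{R_{ox}}$. In the second case, $y \in [y,z] \cap \overline{R_{ox}}$ and we are done. In the first case, $\angle yox = 0$, so the hypothesis $\pi \leq \angle yox + \angle xoz$ forces $\angle xoz = \pi$, meaning $z \in \overline{R_{ox}}$, and again $z \in [y,z] \cap \overline{R_{ox}}$.

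Next I would treat the main case where $y, z \notin \ell(o,x)$. Since they are assumed not to lie on the same side of $\ell(o,x)$, they lie on opposite sides, so the (unique, since $d_\kappa(y,z) \leq d_\kappa(y,o) + d_\kappa(o,z) < D_\kappa$) segment $[y,z]$ meets $\ell(o,x)$ at some point $p$, with $p \ne y$ and $p \ne z$. Because $\ell(o,x) = R_{ox} \cup \overline{R_{ox}}$, either $p \in \overline{R_{ox}}$, in which case the proof is complete, or $p \in R_{ox} \setminus \{o\}$.

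The remaining and main step is to exclude $p \in R_{ox} \setminus \{o\}$. Assuming this, $R_{op} = R_{ox}$, so $\angle yop = \angle yox$ and $\angle poz = \angle xoz$. Note that $0 < d_\kappa(o,p) < D_\kappa$ since $p$ lies on $[y,z]$ and $p \neq o$ follows from $p \in R_{ox} \setminus \{o\}$; also $d_\kappa(y,z) < D_\kappa$. Since $p \in [y,z] \cap (o,p] \subseteq [y,z] \cap [o,p]$ (after noting $[o,p]$ is a well-defined segment on $R_{ox}$), Proposition \ref{diagonal-angle-prop} applied to the configuration $(o; y, p, z)$ yields
\begin{equation*}
\angle yoz = \angle yop + \angle poz = \angle yox + \angle xoz \geq \pi,
\end{equation*}
so $\angle yoz = \pi$ and hence $o \in [y,z]$. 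But then $[y,z]$ contains the two distinct points $o$ and $p$ of $\ell(o,x)$, both at mutual distance less than $D_\kappa$; by uniqueness of the line through two such points, $[y,z] \subseteq \ell(o,x)$, contradicting $y \notin \ell(o,x)$. This closes the argument; the main (and essentially only) obstacle is the last contradiction step, where one has to carefully justify that a geodesic segment sharing two distinct points with a line in $M_\kappa^2$ is forced into that line, which rests on the uniqueness of geodesic segments at distance less than $D_\kappa$ recalled in Section \ref{preliminaries-sec}.
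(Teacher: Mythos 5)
Your proof is correct, and structurally it follows the same route as the paper's: dispose of the cases where $y$ or $z$ lies on $\ell(o,x)$, take the intersection point $p$ of $[y,z]$ with $\ell(o,x)$, and rule out $p\in R_{ox}\setminus\{o\}$ by contradiction. The only genuine divergence is the mechanism of that contradiction. The paper feeds $\pi\le\angle yop+\angle poz$ into Proposition \ref{triangle-convexhull-prop} to obtain $d_{\kappa}(y,o)+d_{\kappa}(o,z)\le d_{\kappa}(y,p)+d_{\kappa}(p,z)=d_{\kappa}(y,z)$, which produces a second geodesic segment from $y$ to $z$ (through $o$) and contradicts uniqueness of $[y,z]$. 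You instead apply Proposition \ref{diagonal-angle-prop} to get $\angle yoz=\angle yop+\angle poz=\angle yox+\angle xoz\ge\pi$, force $\angle yoz=\pi$, deduce $o\in[y,z]$ from $d_{\kappa}(y,o)+d_{\kappa}(o,z)<D_{\kappa}$, and then contradict $y\notin\ell(o,x)$ via collinearity. Both are sound and of comparable length; your angular version is slightly more economical in that $\angle yoz=\pi$ hands you $o\in[y,z]$ directly from the facts recalled in Section \ref{preliminaries-sec}, at the price of the final ``a geodesic segment through two points of a line stays on the line'' step, which is standard but not literally among the stated preliminaries (the paper's length-comparison route sidesteps it by contradicting uniqueness of $[y,z]$ instead). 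One small correction: the bound $d_{\kappa}(o,p)<D_{\kappa}$ follows from $p\in R_{ox}$ --- rays are by definition parametrized on $[0,D_{\kappa})$ --- not from $p\in[y,z]$ as you wrote.
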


\begin{proof}
By hypothesis, we have $d_{\kappa}(y,z)\leq d_{\kappa}(y,o)+d_{\kappa}(o,z)<D_{\kappa}$, and therefore 
there exists a unique geodesic segment $\lbrack y,z\rbrack$ with endpoints $y$ and $z$. 
If both $y$ and $z$ lie on $\ell (o,x)$, then $y\in\overline{R_{ox}}$ or $z\in\overline{R_{ox}}$ because otherwise both $y$ and $z$ would lie in 
$R_{ox}\setminus\{ o\}$, which would imply 
$\angle yox+\angle xoz=0$, contradicting the hypothesis. 
So henceforth we assume that $y\not\in\ell (o,x)$ or $z\not\in\ell (o,x)$. 
Then there exists a unique point $p\in\ell (o,x)\cap\lbrack y,z\rbrack$ because 
$y$ and $z$ are not on the same side of $\ell (o,x)$ by hypothesis. 
The point $p$ satisfies $p\in\overline{R_{ox}}$ because otherwise $p$ would lie in $R_{ox}\setminus\{ o\}$, and therefore  
$\angle yop+\angle poz$ would be equal to $\angle yox +\angle xoz\geq\pi$, which would imply 
$d_{\kappa}(y,o)+d_{\kappa}(o,z)\leq
d_{\kappa}(y,p)+d_{\kappa}(p,z)$ 
by Proposition \ref{triangle-convexhull-prop}, 
contradicting the uniqueness of the geodesic segment with endpoints $y$ and $z$. 
Thus we have $p\in\lbrack y,z\rbrack\cap\overline{R_{ox}}$, which completes the proof. 
\end{proof}

\begin{corollary}\label{ray-coro}
Let $\kappa\in\mathbb{R}$. 
Suppose $o,x,y,z\in M_{\kappa}^2$ are points such that $o\not\in\{ x,y,z\}$, and 
$d_{\kappa}(x,y)+d_{\kappa}(y,o)+d_{\kappa}(o,z)+d_{\kappa}(z,x)<2D_{\kappa}$. 
If $\pi\leq\angle yox+\angle xoz$, and $y$ and $z$ do not lie on the same side of $\ell (o,x)$, 
then 
$\lbrack y,z\rbrack\cap\overline{R_{ox}}\neq\emptyset$. 
\end{corollary}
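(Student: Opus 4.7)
The plan is to reduce the hypotheses of Corollary \ref{ray-coro} to those of Proposition \ref{ray-prop}; the pattern mirrors exactly the passage from Proposition \ref{triangle-convexhull-prop} to Proposition \ref{triangle-convexhull-coro-prop}.

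First I would verify that $0<d_{\kappa}(o,a)<D_{\kappa}$ for every $a\in\{ x,y,z\}$. Strict positivity is immediate from $o\notin\{ x,y,z\}$. For the upper bound, I would apply Proposition \ref{hemisphere-prop} to the map $\mathbb{Z}/4\mathbb{Z}\to M_{\kappa}^2$ sending $[0]_4,[1]_4,[2]_4,[3]_4$ to $x,y,o,z$: the perimeter hypothesis gives a line $L$ such that the convex hull of $\{ x,y,o,z\}$ lies on one side of $L$, so all pairwise distances among these four points are strictly less than $D_{\kappa}$.

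Next I would show that $d_{\kappa}(y,o)+d_{\kappa}(o,z)<D_{\kappa}$, repeating the short argument used in the proof of Proposition \ref{triangle-convexhull-coro-prop}. Let $\tilde{z}\in M_{\kappa}^2$ be the unique point with $d_{\kappa}(o,\tilde{z})=d_{\kappa}(o,z)$ and $\tilde{z}\in\overline{R_{oy}}\setminus\{ o\}$. Suppose for contradiction that $D_{\kappa}\leq d_{\kappa}(y,o)+d_{\kappa}(o,\tilde{z})$; then necessarily $\kappa>0$ and $d_{\kappa}(y,o)+d_{\kappa}(o,\tilde{z})+d_{\kappa}(\tilde{z},y)=2D_{\kappa}$. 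Combining $d_{\kappa}(o,z)=d_{\kappa}(o,\tilde{z})$ with the triangle inequalities $d_{\kappa}(x,y)+d_{\kappa}(\tilde{z},x)\geq d_{\kappa}(\tilde{z},y)$ gives
\begin{equation*}
d_{\kappa}(x,y)+d_{\kappa}(y,o)+d_{\kappa}(o,z)+d_{\kappa}(z,x)
\geq
d_{\kappa}(y,o)+d_{\kappa}(o,\tilde{z})+d_{\kappa}(\tilde{z},y)
=
2D_{\kappa},
\end{equation*}
contradicting the hypothesis. Hence $d_{\kappa}(y,o)+d_{\kappa}(o,\tilde{z})<D_{\kappa}$, and by definition of $\tilde{z}$ this equals $d_{\kappa}(y,o)+d_{\kappa}(o,z)$, giving the desired bound.

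With both preliminary inequalities in hand, the hypotheses of Proposition \ref{ray-prop} hold verbatim: $0<d_{\kappa}(o,a)<D_{\kappa}$ for $a\in\{ x,y,z\}$, $d_{\kappa}(y,o)+d_{\kappa}(o,z)<D_{\kappa}$, $\pi\leq\angle yox+\angle xoz$, and $y,z$ do not lie on the same side of $\ell (o,x)$. Applying that proposition yields $\lbrack y,z\rbrack\cap\overline{R_{ox}}\neq\emptyset$, as required. The only non-routine step is the reduction in the second paragraph, but it is essentially a transcription of the argument already carried out for Proposition \ref{triangle-convexhull-coro-prop}, so no genuine obstacle is expected.
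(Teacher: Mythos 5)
Your proposal is correct and follows the paper's own proof exactly: the paper likewise disposes of the corollary by invoking the argument of Proposition \ref{triangle-convexhull-coro-prop} to obtain $0<d_{\kappa}(o,a)<D_{\kappa}$ for $a\in\{x,y,z\}$ and $d_{\kappa}(y,o)+d_{\kappa}(o,z)<D_{\kappa}$, and then applies Proposition \ref{ray-prop}. The one step to make explicit in your displayed chain is $d_{\kappa}(\tilde{z},x)\leq d_{\kappa}(z,x)$ (which follows from $\pi\leq\angle yox+\angle xoz$ via Lemma \ref{law-of-cos-lemma}, as in the proof of Proposition \ref{triangle-convexhull-prop}); without it the replacement of $d_{\kappa}(z,x)$ by $d_{\kappa}(\tilde{z},x)$ is not a bare triangle inequality, though it is indeed contained in the argument you cite.
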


\begin{proof}
By the same argument as in the proof of Proposition \ref{triangle-convexhull-coro-prop}, 
the hypothesis implies that 
$0<d_{\kappa}(o,a)<D_{\kappa}$ for every $a\in\{ x,y,z\}$, and 
$d_{\kappa}(y,o)+d_{\kappa}(o,z)<D_{\kappa}$. 
Therefore, we have $\lbrack y,z\rbrack\cap\overline{R_{ox}}\neq\emptyset$ by Proposition \ref{ray-prop}. 
\end{proof}

\begin{proposition}\label{convex-quadrilateral-prop}
Let $\kappa\in\mathbb{R}$. 
Suppose $x,y,z,w\in M_{\kappa}^2$ are points such that 
\begin{align*}
&d_{\kappa}(x,y)+d_{\kappa}(y,z)+d_{\kappa}(z,w)+d_{\kappa}(w,x)<2D_{\kappa},\\
&x\neq y,\quad x\neq z,\quad x\neq w,\quad y\neq z, \quad z\neq w.
\end{align*} 
Then $\lbrack x,z\rbrack\cap\lbrack y,w\rbrack\neq\emptyset$ if and only if 
$\angle yxz+\angle zxw\leq\pi$, $\angle yzx+\angle xzw\leq\pi$, and 
$y$ and $w$ are not on the same side of $\ell (x,z)$. 
\end{proposition}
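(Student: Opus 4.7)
The plan is to treat the two implications separately, with Proposition~\ref{diagonal-angle-prop} as the main tool for converting segment intersections into angle-sum identities. Proposition~\ref{hemisphere-prop} applied to $\{x,y,z,w\}$ places their convex hull inside one side of some line, so all pairwise distances in sight are strictly less than $D_\kappa$, and the relevant segments and lines are well defined throughout. For the forward implication, pick $p\in\lbrack x,z\rbrack\cap\lbrack y,w\rbrack$. Applying Proposition~\ref{diagonal-angle-prop} at the vertex $x$ (with the ``median'' $\lbrack x,z\rbrack$ meeting the ``chord'' $\lbrack y,w\rbrack$ at $p$) gives $\angle yxw=\angle yxz+\angle zxw$, and the same argument at $z$ gives $\angle yzw=\angle yzx+\angle xzw$; since angles are bounded by $\pi$, the two required inequalities follow. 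The side condition is immediate because each side of $\ell(x,z)$ is convex and $\lbrack y,w\rbrack$ passes through $p\in\ell(x,z)$.

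For the converse, assume the three conditions. The side hypothesis together with convexity of each side of $\ell(x,z)$ forces $\lbrack y,w\rbrack$ to meet $\ell(x,z)$ at some point $p$, and the goal is to show $p\in\lbrack x,z\rbrack$. Suppose not; by the symmetry of all three hypotheses in $x$ and $z$, we may assume $p\in\overline{R_{xz}}\setminus\{x\}$. Consider first the generic subcase $y,w\notin\ell(x,z)$. From $\angle pxz=\pi$, Corollary~\ref{angle-equality-coro} gives $\angle pxy=\pi-\angle yxz$ and $\angle pxw=\pi-\angle zxw$. Since $p\in\lbrack y,w\rbrack$, $p\neq x$, and $d_\kappa(x,p)<D_\kappa$ by Proposition~\ref{hemisphere-prop}, Proposition~\ref{diagonal-angle-prop} applied at $x$ with middle point $p$ yields
\[
\angle yxw = \angle yxp + \angle pxw = 2\pi-(\angle yxz+\angle zxw)\ge\pi,
\]
so $\angle yxw=\pi$, meaning $w\in\overline{R_{xy}}$ and in particular $y,w\in\ell(x,y)$. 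The lines $\ell(x,y)$ and $\ell(x,z)$ then share the two distinct points $x$ and $p$ with $d_\kappa(x,p)<D_\kappa$, so they must coincide, forcing $y,w\in\ell(x,z)$ and contradicting the generic hypothesis.

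The degenerate subcases in which $y$ or $w$ lies on $\ell(x,z)$ are handled by a direct case analysis on which ray of $\ell(x,z)$ contains the point: each of the hypotheses $\angle yxz+\angle zxw\le\pi$ and $\angle yzx+\angle xzw\le\pi$ restricts the possible configurations severely, and in every admissible arrangement either the offending vertex already lies in $\lbrack x,z\rbrack$ (producing the desired intersection), or one of $x$, $z$ belongs to $\lbrack y,w\rbrack$, or the hypotheses are outright violated. The main obstacle is the generic subcase above: the crucial leverage there is the conversion of $\angle pxz=\pi$ into the supplementary identities for $\angle pxy$ and $\angle pxw$ via Corollary~\ref{angle-equality-coro}, after which Proposition~\ref{diagonal-angle-prop} decomposes $\angle yxw$ and collapses the picture onto a single line to force the contradiction; the symmetric case $p\in\overline{R_{zx}}\setminus\{z\}$ is handled by the analogous argument at the vertex $z$.
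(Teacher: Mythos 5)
Your proposal is correct in substance, and in the main (converse) case it takes a genuinely different route from the paper's. The paper splits the converse into ``at least one of $y,w$ off $\ell (x,z)$'' versus ``both on $\ell (x,z)$''; in the first case it converts the hypothesis $\angle yxz+\angle zxw\leq\pi$ into $\angle yxp+\angle pxw\geq\pi$ via Corollary~\ref{angle-equality-coro}, exactly as you do, but then forces the crossing point into $R_{xz}$ by invoking Corollary~\ref{ray-coro} (which rests on the metric comparison results, Propositions~\ref{triangle-convexhull-prop} and \ref{ray-prop}). You instead stay entirely inside angle additivity: with $p\in\lbrack y,w\rbrack$ as the middle point, Proposition~\ref{diagonal-angle-prop} gives $\angle yxw=\angle yxp+\angle pxw\geq\pi$, hence $\angle yxw=\pi$, and the resulting collinearity of $y,x,w$ collapses $\ell (x,y)$ onto $\ell (x,z)$ (they share the distinct, non-antipodal points $x$ and $p$, since $d_{\kappa}(x,p)<D_{\kappa}$ by Proposition~\ref{hemisphere-prop}), contradicting $y\notin\ell (x,z)$. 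That is legitimate and arguably more economical, since it bypasses the straightening lemma altogether; the hypotheses of the cited propositions do check out ($p\neq x$, all relevant distances below $D_{\kappa}$ via Proposition~\ref{hemisphere-prop}, and $\lbrack y,w\rbrack\subseteq\ell (x,y)$ because $y,w$ are non-antipodal points of that line). The forward implication and the side condition are handled essentially as in the paper (Corollary~\ref{diagonal-angle-coro} is just Proposition~\ref{diagonal-angle-prop} plus the bound $\angle\leq\pi$).

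The one place you are thinner than the paper is the degenerate bucket, and there is a point of substance hiding in it. Your claimed trichotomy is in fact true: when exactly one of $y,w$ lies on $\ell (x,z)$, the angle hypotheses do force that vertex into $\lbrack x,z\rbrack$ (for instance $y\in\overline{R_{xz}}\setminus\{x\}$ gives $\angle yxz=\pi$, hence $\angle zxw=0$ and $w\in\ell (x,z)$, contradicting the subcase). But when both lie on the line with, say, $y\in\overline{R_{xz}}$ and $w\in R_{xz}$, there is a genuinely spherical issue for $\kappa>0$: a priori $\lbrack y,w\rbrack$ could be the arc through the antipode of $x$ and miss $\lbrack x,z\rbrack$ entirely. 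The paper's Case~2 settles exactly this via Proposition~\ref{hemisphere-prop}: $\lbrack y,w\rbrack\subseteq S$ for a side $S$ containing $x$, the antipode of $x$ cannot lie in $S$, so $x\in\lbrack y,w\rbrack$. Since your opening paragraph already places $\mathrm{conv}(\{x,y,z,w\})$ inside such a side $S$, this is a one-line completion rather than a flaw in the approach, but as written your ``direct case analysis'' never names it, and it is the only nontrivial step in that case; a referee would ask you to spell it out.
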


\begin{figure}[htbp]
\centering\begin{tikzpicture}[scale=0.5]
\draw[dashed] (3,-1.5) -- (3,4.5);
\draw (0,1) -- (3,3);
\draw (3,3) -- (5,1.67);
\draw (5,1.67) -- (3,0);
\draw (3,0) -- (0,1);
\node [above left] at (3,3) {$x$};
\node [left] at (0,1) {$y$};
\node [below left] at (3,0) {$z$};
\node [right] at (5,1.67) {$w$};
\end{tikzpicture}
\caption{Proof of Proposition \ref{convex-quadrilateral-prop}.}\label{convex-quadrilateral-fig}
\end{figure}

\begin{proof}
It follows from the hypothesis and Proposition \ref{hemisphere-prop} that 
$\{ x,y,z,w\}$ is contained in a side $S$ of some line, and that 
we have 
\begin{equation*}
d_{\kappa}(y,w)<D_{\kappa},\quad
0<d_{\kappa}(x,a)<D_{\kappa},\quad
0<d_{\kappa}(z,b)<D_{\kappa}
\end{equation*}
for any $a\in\{ y,z,w\}$ and any $b\in\{ x,y,w\}$. 
First assume that $\lbrack x,z\rbrack\cap\lbrack y,w\rbrack\neq\emptyset$. 
Then 
$\angle yxz+\angle zxw\leq\pi$ and 
$\angle yzx+\angle xzw\leq\pi$ 
by Corollary \ref{diagonal-angle-coro}. 
Furthermore, $y$ and $w$ are not on the same side of $\ell (x,z)$ because otherwise 
$\lbrack y,w\rbrack$ would be contained in one side of $\ell (x,z)$ by the convexity of the side, 
which would imply $\ell (x,z)\cap\lbrack y,w\rbrack =\emptyset$, contradicting the assumption that $\lbrack x,z\rbrack\cap\lbrack y,w\rbrack\neq\emptyset$.

Conversely, assume that $\angle yxz+\angle zxw\leq\pi$, $\angle yzx+\angle xzw\leq\pi$, and 
$y$ and $w$ are not on the same side of $\ell (x,z)$. 
We consider two cases. 

\textsc{Case 1}: 
{\em $y\not\in\ell (x,z)$ or $w\not\in\ell (x,z)$.} 
In this case, there exists a unique point $p$ that lies in 
$\lbrack y,w\rbrack\cap\ell (x,z)$, and we have 
\begin{align}\label{convex-quadrilateral-perimeter-ineq}
d_{\kappa}(x,y)+d_{\kappa}(y,p)+d_{\kappa}(p,&w)+d_{\kappa}(w,x)
=
d_{\kappa}(x,y)+d_{\kappa}(y,w)+d_{\kappa}(w,x)\\
&\leq
d_{\kappa}(x,y)+d_{\kappa}(y,z)+d_{\kappa}(z,w)+d_{\kappa}(w,x)<2D_{\kappa}\nonumber
\end{align}
by hypothesis. 
Since $\lbrack x,z\rbrack =R_{xz}\cap R_{zx}$, 
it suffices to prove that $p\in R_{xz}$ and $p\in R_{zx}$. 
Suppose for the sake of contradiction that $p\not\in R_{xz}$. 
Then $p\in\overline{R_{xz}}\setminus\{ x\}$ and $\angle pxz=\pi$. 
Therefore, we have 
$\angle pxy+\angle yxz=\angle pxz=\pi$ and 
$\angle pxw+\angle wxz=\angle pxz=\pi$ 
by Corollary \ref{angle-equality-coro}. 
Combining these with the assumption that $\angle yxz+\angle zxw\leq\pi$, we obtain 
\begin{equation}\label{convex-quadrilateral-yxp-pxw}
\angle yxp+\angle pxw=(\pi -\angle yxz)+(\pi -\angle wxz)\geq\pi .
\end{equation}
Because 
$y$ and $w$ are not on the same side of the line $\ell (x,p)=\ell (x,z)$, 
\eqref{convex-quadrilateral-perimeter-ineq} and \eqref{convex-quadrilateral-yxp-pxw} imply 
$\lbrack y,w\rbrack\cap\overline{R_{xp}}\neq\emptyset$ by Corollary \ref{ray-coro}. 
Because $\overline{R_{xp}}=R_{xz}$ and $\lbrack y,w\rbrack\cap\ell (x,z)=\{ p\}$, 
it follows that $p\in R_{xz}$, contradicting the assumption that $p\not\in R_{xz}$. 
Thus we have $p\in R_{xz}$. 
Exactly the same argument shows that $p\in R_{zx}$. 

\textsc{Case 2}: 
{\em $y\in\ell (x,z)$ and $w\in\ell (x,z)$.} 
If $y\in\lbrack x,z\rbrack$, 
then $\lbrack x,z\rbrack\cap\lbrack y,w\rbrack\neq\emptyset$ clearly. 
So we assume that $y\not\in\lbrack x,z\rbrack$. 
Then $y\not\in R_{xz}$ or $y\not\in R_{zx}$. 
We may assume without loss of generality that $y\not\in R_{xz}$. 
Then $w\in R_{xz}$ because otherwise 
both $\angle yxz$ and $\angle zxw$ would be equal to $\pi$, contradicting the assumption that 
$\angle yxz+\angle zxw\leq \pi$. 
Thus we have 
\begin{equation}\label{convex-quadrilateral-yw-Rxz}
y\in\overline{R_{xz}},\quad
w\in R_{xz}.
\end{equation}
If $\kappa\leq 0$, then \eqref{convex-quadrilateral-yw-Rxz} clearly 
implies $x\in\lbrack y,w\rbrack$, and thus $\lbrack x,z\rbrack\cap\lbrack y,w\rbrack\neq\emptyset$. 
If $\kappa >0$, 
then $\lbrack y,w\rbrack$ contains either $x$ or the antipode of $x$ by \eqref{convex-quadrilateral-yw-Rxz}. 
On the other hand, 
the antipode of $x$ cannot lie in 
$\lbrack y,w\rbrack$ because $\lbrack y,w\rbrack\subseteq S$ by the convexity of $S$. 
Therefore, we have $x\in\lbrack y,w\rbrack$ in the case in which $\kappa >0$ as well, 
which completes the proof.  
\end{proof}

\begin{proposition}\label{plane-angle-prop}
Let $\kappa\in\mathbb{R}$. 
Suppose $o,x,y,z \in M_{\kappa}^2$ are points such that 
$0<d_{\kappa}(o,a)<D_{\kappa}$ 
for every $a\in\{ x,y,z\}$. 
If $\pi\leq\angle yox+\angle xoz$, and $y$ and $z$ do not lie on the same side of $\ell (o,x)$, then 
$\angle xoy+\angle yoz+\angle zox=2\pi$. 
\end{proposition}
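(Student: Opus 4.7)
The plan is to reduce to a situation where Proposition \ref{ray-prop} is directly applicable, produce a point $p\in\lbrack y,z\rbrack\cap\overline{R_{ox}}$, and then decompose $\angle yoz$ through $p$ using Corollary \ref{angle-equality-coro} and Proposition \ref{diagonal-angle-prop}.

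First, I would reduce to the case $d_{\kappa}(y,o)+d_{\kappa}(o,z)<D_{\kappa}$. Since $\angle aob$ depends only on the rays $R_{oa}$ and $R_{ob}$, and each side of $\ell(o,x)$ is a union of rays from $o$, I may replace $y$ by some $y'\in R_{oy}\setminus\{o\}$ and $z$ by some $z'\in R_{oz}\setminus\{o\}$ chosen close enough to $o$ that $d_{\kappa}(y',o)+d_{\kappa}(o,z')<D_{\kappa}$; this disturbs neither the three pairwise angles at $o$ nor the "not on the same side of $\ell(o,x)$" hypothesis. Having done this, Proposition \ref{ray-prop} yields a point $p\in\lbrack y,z\rbrack\cap\overline{R_{ox}}$.

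I would then split on whether $p=o$. If $p=o$, then $o\in\lbrack y,z\rbrack$, so $\angle yoz=\pi$; Corollary \ref{angle-equality-coro}, applied with $\angle yoz$ in the role of the angle equal to $\pi$, yields $\angle yoz=\angle yox+\angle xoz$, and hence $\angle xoy+\angle yoz+\angle zox=\pi+\pi=2\pi$. If $p\neq o$, then $p\in\overline{R_{ox}}\setminus\{o\}$ gives $\angle xop=\pi$, and two applications of Corollary \ref{angle-equality-coro} produce $\angle yop=\pi-\angle yox$ and $\angle zop=\pi-\angle zox$. Since $p$ lies in both $\lbrack y,z\rbrack$ and $\lbrack o,p\rbrack$, Proposition \ref{diagonal-angle-prop} gives $\angle yoz=\angle yop+\angle poz$, and adding $\angle xoy+\angle zox$ to both sides yields $\angle xoy+\angle yoz+\angle zox=2\pi$.

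The main obstacle I expect is the reduction step: I need to verify that shrinking $y$ and $z$ toward $o$ along their rays simultaneously preserves the side-of-line hypothesis, the angle-sum hypothesis, and all three pairwise angles at $o$, so that Proposition \ref{ray-prop} becomes applicable in the new configuration (and, when $\kappa>0$, that the chosen $y',z'$ avoid the antipode of $o$). Once this reduction is in place, the remaining case analysis is a direct assembly of Corollary \ref{angle-equality-coro} and Proposition \ref{diagonal-angle-prop}.
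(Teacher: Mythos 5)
Your proposal is correct and follows essentially the same route as the paper's proof: shrink $y$ and $z$ toward $o$ along their rays so that $d_{\kappa}(y,o)+d_{\kappa}(o,z)<D_{\kappa}$, invoke Proposition \ref{ray-prop} to get $p\in\lbrack y,z\rbrack\cap\overline{R_{ox}}$, split $\angle yoz$ at $p$ via Proposition \ref{diagonal-angle-prop}, and convert $\angle yop$, $\angle poz$ into $\pi-\angle yox$, $\pi-\angle xoz$ via Corollary \ref{angle-equality-coro}. The reduction step you flag as the main obstacle is handled in the paper exactly as you describe (using that the union of $\ell(o,x)$ and a side is $D_{\kappa}$-convex to preserve the side hypothesis), and your separate treatment of $p=o$ matches the paper's degenerate case $o\in\lbrack y',z'\rbrack$.
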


\begin{figure}[htbp]
\centering\begin{tikzpicture}[scale=0.4]
\draw (2,5) -- (6,4.5);
\draw (0,4) -- (4,6);
\draw (4,6) -- (8,3);
\draw (4,6) -- (5,10);
\draw[dashed] (4,6) -- (2.5,0);
\fill (0,4) circle [radius=0.15];
\fill (4,6) circle [radius=0.15];
\fill (5,10) circle [radius=0.15];
\fill (8,3) circle [radius=0.15];
\fill (2,5) circle [radius=0.15];
\fill (3,2) circle [radius=0.15];
\fill (3,2) circle [radius=0.15];
\fill (6,4.5) circle [radius=0.15];
\fill (3.697,4.785) circle [radius=0.15];
\node [above] at (5,10) {$x$};
\node [left] at (0,4) {$y$};
\node [right] at (8,3) {$z$};
\node [above left] at (4,6) {$o$};
\node [above left] at (2,5) {$y'$};
\node [right] at (3,2) {$x'$};
\node [above right] at (6,4.5) {$z'$};
\node [below right] at (3.697,4.785) {$p$};
\end{tikzpicture}
\caption{Proof of Proposition \ref{plane-angle-prop}.}\label{plane-angle-fig}
\end{figure}
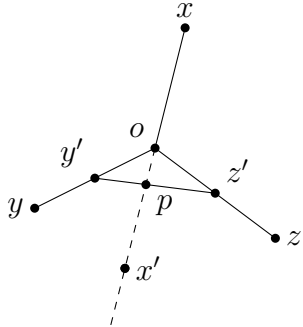

\begin{proof}
Choose a point $x'\in\overline{R_{ox}}\setminus\{ o\}$. 
Choose two points $y'\in (o,y\rbrack$ and $z'\in (o,z\rbrack$ such that 
$d_{\kappa}(y' ,o)+d_{\kappa}(o,z')<D_{\kappa}$. 
If $o\in\lbrack y',z' \rbrack$, then $\angle zoy=\angle z'oy' =\pi$, and therefore it follows from Corollary \ref{angle-equality-coro} that 
\begin{equation*}
\angle xoy+\angle yoz+\angle zox=\pi +(\angle zox+\angle xoy)=\pi +\angle zoy=2\pi .
\end{equation*}
So henceforth we assume that $o\not\in\lbrack y' ,z' \rbrack$. 
The hypothesis that $y$ and $z$ do not lie on the same side of $\ell (o,x)$ implies that 
$y'$ and $z'$ do not lie on the same side of $\ell (o,x)$ because 
the union of $\ell (o,x)$ and any one of its sides is a $D_{\kappa}$-convex subset of $M_{\kappa}^2$. 
We also have 
$\pi\leq\angle yox+\angle xoz=\angle y'ox+\angle xoz'$ 
by hypothesis. 
Therefore, there exists a point $p\in(\overline{R_{ox}}\setminus\{ o\} )\cap\lbrack y',z'\rbrack$ by Proposition \ref{ray-prop}. 
Since $\lbrack y',z'\rbrack\cap\lbrack o,p\rbrack\neq\emptyset$, 
we have $\angle y'oz' =\angle y'op+\angle poz'$ by 
Proposition \ref{diagonal-angle-prop}, and therefore 
\begin{equation}\label{plane-angle-ydashozdash}
\angle yoz
=\angle y'oz' =\angle y'op+\angle poz'
=\angle yox' +\angle x'oz.
\end{equation}
On the other hand, since $\angle xox'=\pi$, we have 
\begin{equation}\label{plane-angle-xoxdash}
\angle xoy+\angle yox'=\angle xox' =\pi ,\quad
\angle x'oz +\angle zox=\angle x'ox =\pi
\end{equation}
by Corollary \ref{angle-equality-coro}. 
Combining \eqref{plane-angle-ydashozdash} and \eqref{plane-angle-xoxdash}, we obtain 
\begin{equation*}
\angle xoy+\angle yoz+\angle zox
=
\angle xoy +\angle yox' +\angle x'oz +\angle zox=2\pi ,
\end{equation*}
which completes the proof. 
\end{proof}

Proposition \ref{plane-angle-prop} implies the following corollary. 

\begin{corollary}\label{plane-angle-coro}
Let $\kappa\in\mathbb{R}$. 
Suppose $o,x,y,z \in M_{\kappa}^2$ are points such that 
$0<d_{\kappa}(o,a)<D_{\kappa}$ 
for every $a\in\{ x,y,z\}$. 
Then the following are equivalent: 
\begin{enumerate}
\item[$(1)$]
$\pi\leq\angle yox+\angle xoz$, and 
$y$ and $z$ do not lie on the same side of $\ell (o,x)$. 
\item[$(2)$]
$\pi\leq\angle zoy+\angle yox$, and 
$z$ and $x$ do not lie on the same side of $\ell (o,y)$. 
\item[$(3)$]
$\pi\leq\angle xoz+\angle zoy$, and 
$x$ and $y$ do not lie on the same side of $\ell (o,z)$.
\end{enumerate}
\end{corollary}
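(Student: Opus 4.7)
The plan is to prove $(1)\Rightarrow (2)$; the remaining equivalences then follow by the cyclic symmetry of the three conditions under the relabeling $x\mapsto y\mapsto z\mapsto x$.

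Assuming $(1)$, I first invoke Proposition \ref{plane-angle-prop} to obtain $\angle xoy+\angle yoz+\angle zox=2\pi$. Since every angle measure in $M_{\kappa}^2$ lies in $\lbrack 0,\pi\rbrack$, the first inequality of $(2)$ is immediate:
\[
\angle zoy+\angle yox=2\pi-\angle xoz\geq \pi .
\]

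The substantive step is to show that $z$ and $x$ do not lie on the same side of $\ell(o,y)$. I argue by contradiction: suppose they do. Then neither point lies on $\ell(o,y)$, and in particular they are not on opposite sides of $\ell(o,y)$. Applying Proposition \ref{angle-equality-prop} (with the roles of the two reference points relabeled so that $\ell(o,y)$ becomes the line in question), one of the two equalities
\[
\angle yoz=\angle yox+\angle xoz\qquad\text{or}\qquad \angle yox=\angle yoz+\angle zox
\]
must hold, according to which of $\angle yox$ and $\angle yoz$ is the larger. Substituting either equality into the identity $\angle xoy+\angle yoz+\angle zox=2\pi$ collapses it to $\angle xoy+\angle xoz=\pi$ in the first case, hence $\angle yoz=\pi$, and to $\angle yoz+\angle zox=\pi$ in the second, hence $\angle yox=\pi$. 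The first possibility forces $z\in\overline{R_{oy}}\subseteq\ell(o,y)$, contradicting $z$ being on a side of $\ell(o,y)$; the second forces $y\in\overline{R_{ox}}\subseteq\ell(o,x)$, so $\ell(o,y)=\ell(o,x)\ni x$, again contradicting $x$ being on a side. This establishes $(2)$.

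The implications $(2)\Rightarrow(3)$ and $(3)\Rightarrow(1)$ are obtained by replaying the identical argument with the labels cyclically permuted, completing the proof. The main obstacle I anticipate is bookkeeping: Proposition \ref{angle-equality-prop} is stated with its ``side'' condition relative to $\ell(o,x)$, so one has to carefully relabel its variables to apply it with respect to $\ell(o,y)$, and the two orderings of $\angle yox$ and $\angle yoz$ must be handled symmetrically. Once the relabeling is correctly set up, the algebraic collapse of the $2\pi$-identity to a degenerate angle is direct.
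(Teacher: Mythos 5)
Your proof is correct and follows essentially the same route as the paper: reduce to $(1)\Rightarrow(2)$ by cyclic symmetry, get the angle inequality from the $2\pi$-identity of Proposition \ref{plane-angle-prop}, and rule out $x,z$ on the same side of $\ell(o,y)$ by contradiction via Proposition \ref{angle-equality-prop}. The only cosmetic difference is that the paper concludes the contradiction by showing the angle sum would be strictly less than $2\pi$, whereas you extract a degenerate angle equal to $\pi$ that forces a point onto the line; these are equivalent endgames.
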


\begin{proof}
It suffices to prove that $(1)$ implies $(2)$. 
Assume that $(1)$ is true. 
Then Proposition \ref{plane-angle-prop} implies that 
\begin{equation}\label{convexhull-separation-2pi}
\angle xoy+\angle yoz+\angle zox=2\pi ,
\end{equation}
and therefore 
\begin{equation*}
\angle zoy+\angle yox =2\pi -\angle zox\geq \pi .
\end{equation*}
Assume for the sake of contradiction that $z$ and $x$ lie on the same side of $\ell (o,y)$. 
Then neither $z$ nor $x$ is on $\ell (o,y)$, and therefore $\angle yoz<\pi$ and $\angle yox<\pi$. 
If $\angle yoz\leq\angle yox$, then $\angle yoz +\angle zox =\angle yox$ by Proposition \ref{angle-equality-prop}, and therefore 
\begin{equation*}
\angle xoy+\angle yoz+\angle zox
=
2\angle xoy
<
2\pi .
\end{equation*}
Similarly, if $\angle yox\leq\angle yoz$, then 
$\angle yox +\angle xoz =\angle yoz$ by Proposition \ref{angle-equality-prop}, and therefore 
\begin{equation*}
\angle xoy+\angle yoz+\angle zox
=
2\angle yoz
<
2\pi .
\end{equation*}
Thus we always have 
$\angle xoy+\angle yoz+\angle zox <2\pi$, 
contradicting \eqref{convexhull-separation-2pi}. 
Thus $(2)$ is true. 
\end{proof}

\begin{proposition}\label{interior-of-triangle-prop}
Let $\kappa\in\mathbb{R}$. 
Suppose $o,x,y,z \in M_{\kappa}^2$ 
are four distinct points such that $\{ o,x,y,z\}$ is contained in a side of some line. 
Assume that $\pi\leq\angle yox+\angle xoz$, and that 
$y$ and $z$ do not lie on the same side of $\ell (o,x)$. 
Then all of the following conditions are true: 
\begin{enumerate}
\item[$(1)$]
$o$ and $x$ do not lie on opposite sides of $\ell (y,z)$. 
\item[$(2)$]
$o$ and $y$ do not lie on opposite sides of $\ell (z,x)$. 
\item[$(3)$]
$o$ and $z$ do not lie on opposite sides of $\ell (x,y)$. 
\end{enumerate}
\end{proposition}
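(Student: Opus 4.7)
The plan is to show that $o$ lies in the convex hull $\mathrm{conv}(\{x,y,z\})$, after which all three conclusions follow uniformly: for each of the three edges, the classical triangle with vertices $x,y,z$ is a convex subset of $M_\kappa^2$ containing $\{x,y,z\}$ and hence contains $\mathrm{conv}(\{x,y,z\})$, and this triangle lies in the closed half-$M_\kappa^2$ bounded by the line through that edge and containing the opposite vertex. The latter uses only the $D_\kappa$-convexity of the closed half-$M_\kappa^2$ together with the fact that each segment from the opposite vertex to a point on the edge has length $<D_\kappa$.

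Fix a side $S$ of a line containing $\{o,x,y,z\}$; then all pairwise distances are $<D_\kappa$, every segment between two of the four points stays in $S$, and $\mathrm{conv}(\{x,y,z\})\subseteq S$. I first dispose of the degenerate case in which $y$ or $z$ lies on $\ell(o,x)$: the angle hypothesis $\pi\leq\angle yox+\angle xoz$ then forces $y$ and $z$ to lie on opposite rays of $\ell(o,x)$ from $o$, whence $o\in\lbrack y,z\rbrack$. So assume $y$ and $z$ are strictly on opposite sides of $\ell(o,x)$, and let $p$ be the unique intersection of the segment $\lbrack y,z\rbrack\subseteq S$ with $\ell(o,x)$, which lies in $S$.

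I now split into cases by the location of $p$ on $\ell(o,x)$. If $p\in\overline{R_{ox}}\setminus\{o\}$, then $x,o,p$ lie on the connected arc $\ell(o,x)\cap S$ with $o$ strictly between $x$ and $p$, so $d_\kappa(x,o)+d_\kappa(o,p)=d_\kappa(x,p)<D_\kappa$, whence $o\in\lbrack x,p\rbrack\subseteq\mathrm{conv}(\{x,y,z\})$. If $p=o$, then $o\in\lbrack y,z\rbrack$ immediately. If $p\in R_{ox}\setminus\{o\}$, then $\angle yop=\angle yox$ and $\angle poz=\angle xoz$; applying Proposition \ref{diagonal-angle-prop} to the intersection $\lbrack y,z\rbrack\cap\lbrack o,p\rbrack\ni p$ gives $\angle yoz=\angle yop+\angle poz=\angle yox+\angle xoz\geq\pi$, which forces $\angle yoz=\pi$ and $o\in\lbrack y,z\rbrack$. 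Thus in every case $o\in\mathrm{conv}(\{x,y,z\})$, and (1), (2), (3) follow as described.

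The main obstacle will be the case analysis in the second step: distinguishing which ray of $\ell(o,x)$ contains $p$, and verifying the distance bounds (in particular $d_\kappa(x,o)+d_\kappa(o,p)<D_\kappa$ needed when $\kappa>0$). These rest on the fact that $x,o,p$ all sit in a single connected arc of length $\leq D_\kappa$, guaranteed by all points lying in the same side $S$. The passage from $o\in\mathrm{conv}(\{x,y,z\})$ to the three separation statements is then straightforward, with a trivial separate treatment when $x$, $y$, or $z$ lies on the line through the opposite edge.
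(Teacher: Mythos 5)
Your overall strategy -- prove the stronger statement $o\in\mathrm{conv}(\{x,y,z\})$ and then read off all three separation claims -- is sound and genuinely different from the paper's proof, which instead reduces everything to condition $(1)$ via Corollary \ref{plane-angle-coro} and argues by contradiction: if $o$ and $x$ were on opposite sides of $\ell (y,z)$, the crossing point of $\lbrack o,x\rbrack$ with $\ell (y,z)$ and the crossing point of $\lbrack y,z\rbrack$ with $\ell (o,x)$ would both lie in the ambient side $S$ and hence coincide, giving $\lbrack o,x\rbrack\cap\lbrack y,z\rbrack\neq\emptyset$ and so $\angle yox+\angle xoz=\angle yoz<\pi$ by Proposition \ref{diagonal-angle-prop}. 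Your main case (both $y$ and $z$ strictly off $\ell (o,x)$) is correct: the three sub-cases for the position of $p$ on $\ell (o,x)$ work, and the distance bound $d_{\kappa}(x,o)+d_{\kappa}(o,p)<D_{\kappa}$ follows from the arc argument you indicate (the same one the paper uses in Proposition \ref{triangle-boundary-prop}). For the final deduction it is cleaner to avoid the unproved claim that the solid triangle is convex: by Proposition \ref{hemisphere-prop} the set $\mathrm{conv}(\{x,y,z\})$ lies in a side $S'$ of some line, and $H\cap S'$ (where $H$ is the union of $\ell (y,z)$ with the side containing $x$) is genuinely convex, contains $\{x,y,z\}$, and hence contains $\mathrm{conv}(\{x,y,z\})\ni o$.

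There is, however, a concrete error in your degenerate case. You claim that if $y$ or $z$ lies on $\ell (o,x)$, then the angle hypothesis forces $y$ and $z$ to lie on opposite rays of $\ell (o,x)$ from $o$, so that $o\in\lbrack y,z\rbrack$. This is false: take $\kappa =0$, $o=(0,0)$, $x=(1,0)$, $y=(-1,0)$, $z=(0,1)$. Then $\angle yox=\pi$, so $\pi\leq\angle yox+\angle xoz$ holds with no constraint on $z$; the side condition is vacuous because $y\in\ell (o,x)$; yet $z\notin\ell (o,x)$ and $o\notin\lbrack y,z\rbrack$. The step fails exactly when one of $y,z$ lies on $\overline{R_{ox}}$ and the other is off the line. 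The repair is easy and keeps your strategy intact: if, say, $y\in\ell (o,x)$, then either $\angle yox=\pi$, in which case $y\in\overline{R_{ox}}$ and $o\in\lbrack x,y\rbrack\subseteq\mathrm{conv}(\{x,y,z\})$ by the same arc argument, or $\angle yox=0$, in which case the hypothesis forces $\angle xoz=\pi$, so $z\in\overline{R_{ox}}$ and $o\in\lbrack x,z\rbrack$. With that correction the proof goes through.
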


\begin{proof}
By Corollary \ref{plane-angle-coro}, it suffices to prove that $(1)$ is true. 
If $o\in\ell (y,z)$, then 
$o$ and $x$ do not lie on opposite sides of $\ell (y,z)$ clearly, 
so assume that $o\not\in\ell (y,z)$. 
Then we have $\angle yoz<\pi$ and $\ell (o,x)\neq\ell (y,z)$. 
Because $y$ and $z$ do not lie on the same side of $\ell (o,x)$ by hypothesis, 
there exists a point $p\in\ell (o,x)\cap\lbrack y,z\rbrack$. 
Suppose for the sake of contradiction that 
$o$ and $x$ lie on opposite sides of $\ell (y,z)$. 
Then there exists a point $q\in\lbrack o,x\rbrack\cap\ell (y,z)$. 
By hypothesis, $\{ o,x,y,z\}$ is contained in a side $S$ of some line, and both $p$ and $q$ 
lie in $S$ by the convexity of $S$. 
It follows that $p=q$ because otherwise 
the two distinct lines $\ell (o,x)$ and $\ell (y,z)$ would share the two distinct points $p$ and $q$, and 
therefore $\kappa$ would be greater than $0$, and $q$ would be the antipode of $p$, 
contradicting the fact that $\{ p,q\}\subseteq S$. 
Therefore, we have $\lbrack o,x\rbrack\cap\lbrack y,z\rbrack\neq\emptyset$, 
which implies 
$\angle yox+\angle xoz=\angle yoz<\pi$ 
by Proposition \ref{diagonal-angle-prop}, 
contradicting the hypothesis. 
\end{proof}

\begin{proposition}\label{triangle-boundary-prop}
Let $\kappa\in\mathbb{R}$. 
Suppose $o,x,y,z\in M_{\kappa}^2$ are four distinct points such that 
$\{ o,x,y,z\}$ is contained in a side of some line. 
Assume that $\pi\leq\angle yox+\angle xoz$, and 
that $y$ and $z$ do not lie on the same side of $\ell (o,x)$. 
If $o\in\ell (x,y)\cup\ell (y,z)\cup\ell (z,x)$, then $o\in\lbrack x,y\rbrack\cup\lbrack y,z\rbrack\cup\lbrack z,x\rbrack$. 
\end{proposition}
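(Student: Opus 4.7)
The plan is to proceed by a case analysis on which of the three lines $\ell(x,y)$, $\ell(y,z)$, $\ell(z,x)$ contains $o$. Throughout, I use the hypothesis that $\{o,x,y,z\}$ is contained in a common side $S$ of some line; by the preliminaries this forces $d_{\kappa}(a,b)<D_{\kappa}$ for all $a,b\in\{o,x,y,z\}$ and, in the spherical case $\kappa>0$, excludes antipodal pairs among the four points, since an open hemisphere contains no pair of antipodal points.

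Suppose first that $o\in\ell(y,z)$. If $\ell(o,x)\neq\ell(y,z)$, the first step is to observe that $y,z\notin\ell(o,x)$: two distinct lines through $o$ meet only at $o$ when $\kappa\leq 0$, and at $o$ together with its antipode $o'$ when $\kappa>0$; but $o'\notin S$, so $o'\notin\{y,z\}$. The hypothesis then forces $y$ and $z$ to lie on opposite sides of $\ell(o,x)$, so there is some $p\in[y,z]\cap\ell(o,x)$; since $[y,z]\subseteq S$ by convexity of $S$, the same intersection argument applied to $p\in\ell(o,x)\cap\ell(y,z)\cap S$ yields $p=o$, hence $o\in[y,z]$. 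If instead $\ell(o,x)=\ell(y,z)$, all four points lie on this common line, which meets $S$ in an open arc along which every pair of points has distance strictly less than $D_{\kappa}$. Each of $y,z$ then lies in $R_{ox}\setminus\{o\}$ or $\overline{R_{ox}}\setminus\{o\}$, so $\angle yox,\angle xoz\in\{0,\pi\}$; the assumption $\pi\leq\angle yox+\angle xoz$ forces at least one of them to equal $\pi$, and the opposite-ray facts from the preliminaries then give $o\in[x,y]$ or $o\in[x,z]$ accordingly.

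Suppose next that $o\in\ell(x,y)$. Since $\ell(o,x)$ and $\ell(x,y)$ share the two non-antipodal points $o$ and $x$, they coincide, so $y\in\ell(o,x)$. If $y\in\overline{R_{ox}}\setminus\{o\}$, the opposite-ray facts give $o\in[x,y]$ directly. If instead $y\in R_{ox}\setminus\{o\}$, then $\angle yox=0$, so the hypothesis forces $\angle xoz=\pi$ and hence $z\in\overline{R_{ox}}\setminus\{o\}$; then $y$ and $z$ lie on opposite rays from $o$ on a common line in $S$, whence $o\in[y,z]$. The case $o\in\ell(z,x)$ is symmetric upon interchanging $y$ and $z$.

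The only genuinely delicate point is the spherical case $\kappa>0$, where two distinct great circles meet in two antipodal points rather than one; this is handled uniformly by the assumption that $\{o,x,y,z\}$ is contained in a side of some line, which in that case is an open hemisphere and hence excludes antipodal points among the relevant intersections. Aside from this, the argument is bookkeeping of the zero-and-$\pi$ values of the angles at $o$ to points on $\ell(o,x)$, together with the opposite-ray and convexity facts already recorded in the preliminaries.
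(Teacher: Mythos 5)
Your proof is correct. The core of your argument --- the case $o\in\ell(x,y)$, where you identify $\ell(x,y)$ with $\ell(o,x)$ and then do the bookkeeping of which ray $R_{ox}$ or $\overline{R_{ox}}$ contains $y$ (and, when $\angle yox=0$, deduce $\angle xoz=\pi$ and place $z$ on $\overline{R_{ox}}$) --- is exactly the paper's proof. The only organizational difference is how the remaining cases are dispatched: the paper invokes Corollary \ref{plane-angle-coro} to observe that the hypotheses are cyclically symmetric in $x,y,z$, so that only the case $o\in\ell(x,y)$ needs to be treated, whereas you handle $o\in\ell(z,x)$ by the transposition $y\leftrightarrow z$ (under which the hypotheses are manifestly symmetric) and give a direct argument for $o\in\ell(y,z)$: when $\ell(o,x)\neq\ell(y,z)$, the segment $\lbrack y,z\rbrack$ must cross $\ell(o,x)$, and since $\lbrack y,z\rbrack\subseteq\ell(y,z)\cap S$ the crossing point can only be $o$ itself. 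Your route avoids Corollary \ref{plane-angle-coro} (and hence the $2\pi$ angle-sum machinery behind it) at the cost of one extra, but elementary, case; note that this subcase does not even use the angle hypothesis. One small point of care in the collinear subcases: to conclude $o\in\lbrack p,q\rbrack$ from $p\in R_{ox}$, $q\in\overline{R_{ox}}$ you need $d_{\kappa}(p,o)+d_{\kappa}(o,q)<D_{\kappa}$, which is not quite the same as saying every pair of points of the arc $\ell(o,x)\cap S$ is at distance $<D_{\kappa}$; it does follow, as in the paper, from the fact that the arc $\lbrack p,o\rbrack\cup\lbrack o,q\rbrack$ lies in $S$ and an arc of length $\geq D_{\kappa}$ would contain an antipodal pair.
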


\begin{proof}
By hypothesis, $\{ o,x,y,z\}$ is contained in a side $S$ of some line. 
By Corollary \ref{plane-angle-coro}, we may assume without loss of generality that $o\in\ell (x,y)$. 
Because $\lbrack x,o\rbrack\cup\lbrack o,y\rbrack\subseteq S$ by the convexity of $S$, 
if $y\in\overline{R_{ox}}$, then we have $d_{\kappa}(x,o)+d_{\kappa}(o,y)\leq D_{\kappa}$, and thus $o\in\lbrack x,y\rbrack$. 
So assume that $y\not\in\overline{R_{ox}}$. 
Then $y\in R_{ox}\setminus\{ o\}$, and thus $\angle yox=0$. 
Combining this with the hypothesis that $\pi\leq\angle yox+\angle xoz$, we obtain 
$\angle xoz=\pi$, and therefore $z\in\overline{R_{ox}}$. 
Because $\lbrack x,o\rbrack\cup\lbrack o,z\rbrack\subseteq S$ by the convexity of $S$, 
it follows that $d_{\kappa}(x,o)+d_{\kappa}(o,z)<D_{\kappa}$, and thus $o\in\lbrack z,x\rbrack$, which completes the proof. 
\end{proof}

The following proposition follows from Proposition \ref{interior-of-triangle-prop} and Proposition \ref{triangle-boundary-prop}. 

\begin{proposition}\label{interior-of-triangle-angle-prop}
Let $\kappa\in\mathbb{R}$. 
Suppose $o,x,y,z \in M_{\kappa}^2$ 
are four distinct points such that $\{ o,x,y,z\}$ is contained in a side of some line. 
Assume that $\pi\leq\angle yox+\angle xoz$, and that 
$y$ and $z$ do not lie on the same side of $\ell (o,x)$. 
Then 
\begin{equation*}
\angle zxo+\angle oxy=\angle zxy,\quad\angle xyo+\angle oyz=\angle xyz ,\quad\angle yzo+\angle ozx=\angle yzx.
\end{equation*}
\end{proposition}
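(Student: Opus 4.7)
The plan is to reduce to a single angle equality by symmetry and then split into two main cases according to whether or not $o$ lies on one of the three lines $\ell(x,y)$, $\ell(y,z)$, $\ell(z,x)$.

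First, I would observe that by Corollary \ref{plane-angle-coro} the hypotheses are invariant under cyclic permutation of $\{x,y,z\}$, so it suffices to prove $\angle zxo + \angle oxy = \angle zxy$; the other two equalities will follow by applying the same argument after permuting. Next I would dispose of the trivial angle values: if $\angle zxy = \pi$, then Corollary \ref{angle-equality-coro} already gives the identity, so I may assume $\angle zxy < \pi$.

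\textbf{Case 1 (boundary case):} Suppose $o \in \ell(x,y) \cup \ell(y,z) \cup \ell(z,x)$. Then Proposition \ref{triangle-boundary-prop} places $o$ on one of the three segments $[x,y]$, $[y,z]$, or $[z,x]$. In each subcase the desired identity becomes essentially tautological: for instance, if $o\in[y,z]$, then $[o,x]\cap[y,z]\ni o$, so Proposition \ref{diagonal-angle-prop} yields $\angle zxo+\angle oxy=\angle zxy$ immediately; if $o\in[x,y]$, then $o\in R_{xy}\setminus\{x\}$ gives $\angle oxz=\angle zxy$ and $\angle oxy=0$, so the sum again equals $\angle zxy$; the subcase $o\in[z,x]$ is symmetric. (Along the way one also uses that if $o$ lies on a segment incident to $y$ or $z$, then one of the angles at that vertex is $0$ and the other equals the corresponding triangle angle, which is what we need for the analogous identities at $y$ and $z$.)

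\textbf{Case 2 (interior case):} Suppose $o$ lies on none of the three lines. Proposition \ref{interior-of-triangle-prop} then tells us that $o$ and $z$ lie strictly on the same side of $\ell(x,y)$, and $o$ and $y$ lie strictly on the same side of $\ell(z,x)$. Under the assumption $0 < \angle zxy < \pi$, the second formulation of Proposition \ref{angle-equality-prop} gives precisely $\angle zxo+\angle oxy = \angle zxy$. The only thing left is to rule out $\angle zxy=0$ in this case: if $\angle zxy=0$ then $x,y,z$ are collinear on a common line $\ell$ with $y,z$ on the same ray from $x$, and since $o\notin\ell$ the line $\ell(o,x)$ meets $\ell$ only at $x$, forcing $y$ and $z$ to lie on the same side of $\ell(o,x)$ and contradicting the hypothesis. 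The main obstacle is this bookkeeping of degenerate configurations; once the reduction to one angle is made via Corollary \ref{plane-angle-coro}, the substantive content is carried entirely by Propositions \ref{interior-of-triangle-prop}, \ref{triangle-boundary-prop}, \ref{angle-equality-prop}, \ref{diagonal-angle-prop}, and Corollary \ref{angle-equality-coro}.
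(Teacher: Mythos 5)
Your proof is correct and follows essentially the same route as the paper's: reduce to the single equality at $x$ via Corollary \ref{plane-angle-coro}, combine Proposition \ref{interior-of-triangle-prop} with the second part of Proposition \ref{angle-equality-prop} when $0<\angle zxy$, and fall back on Proposition \ref{triangle-boundary-prop} in the degenerate case $\angle zxy=0$. Your separate boundary case ($o$ lying on one of the three lines) is harmless but redundant, since the condition ``neither on the opposite side'' in Proposition \ref{angle-equality-prop} already covers configurations with $o$ on $\ell(x,y)$ or $\ell(x,z)$, which is how the paper treats all positions of $o$ uniformly once $\angle zxy>0$.
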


\begin{proof}
By Corollary \ref{plane-angle-coro}, it suffices to prove that $\angle zxo+\angle oxy=\angle zxy$. 
It follows from the hypothesis and Proposition \ref{interior-of-triangle-prop} that 
$o$ is neither on the opposite side of $\ell (x,z)$ from $y$ nor 
on the opposite side of $\ell (x,y)$ from $z$. 
Therefore, Proposition \ref{angle-equality-prop} implies that 
$\angle zxo+\angle oxy=\angle zxy$ whenever $0<\angle zxy$. 
So assume that $\angle zxy=0$. 
Then $z\in R_{xy}\setminus\{ x\}$. 
It follows that $o\in\ell (x,y)$ because otherwise $R_{xy}\setminus\{ x\}$ would coincide with the 
union of $\ell (x,y)$ and the side of $\ell (o,x)$ containing $y$, 
and therefore $y$ and $z$ would lie on the same side of $\ell (o,x)$, contradicting the hypothesis. 
Therefore, we have $o\in\lbrack x,y\rbrack\cup\lbrack y,z\rbrack\cup\lbrack z,x\rbrack$ by Proposition \ref{triangle-boundary-prop}. 
Since $x$, $y$ and $z$ all lie in $R_{xy}$, this implies that $o\in R_{xy}$. 
It follows that $\angle zxo=\angle oxy=0$, and therefore 
$\angle zxo+\angle oxy=\angle zxy$, which completes the proof. 
\end{proof}

\section{A generalization of Alexandrov's lemma}\label{alexandrov-sec}

In this section, we prove two lemmas, which generalize Alexandrov's lemma \cite[p.25]{BH}. 
The following lemma was proved for $\kappa =0$ in \cite[Lemma 8.4]{toyoda-five}. 

\begin{lemma}\label{larger-larger-lemma}
Let $\kappa\in\mathbb{R}$. 
Suppose $x,y,z,w,x',y',z',w'\in M_{\kappa}^2$ are points such that 
\begin{align*}
&d_{\kappa}(x,y)+d_{\kappa}(y,z)+d_{\kappa}(z,w)+d_{\kappa}(w,x)<2D_{\kappa},\\
&d_{\kappa} (x,y)=d_{\kappa}(x',y'),\quad d_{\kappa} (y,z)=d_{\kappa} (y',z'),\quad d_{\kappa}(z,w)= d_{\kappa}(z',w'),\\
&d_{\kappa}(w,x)=d_{\kappa}(w',x'),\quad d_{\kappa}(y,w)\leq d_{\kappa}(y',w'). 
\end{align*}
Whenever $x\neq y$, $x\neq z$, $x\neq w$, $y\neq z$ and $z\neq w$, 
assume that $\pi\leq\angle yzx+\angle xzw$, and that 
$y$ and $w$ do not lie on the same side of $\ell (x,z)$. 
Then 
\begin{equation*}
d_{\kappa} (x,z)\leq d_{\kappa}(x',z').
\end{equation*} 
\end{lemma}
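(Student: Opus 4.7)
The plan is to reduce via a reflection argument and then deform the unprimed configuration into the primed one, tracking $d_{\kappa}(x,z)$ monotonically.

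First I would handle the degenerate cases (some pair of points coincides, or $y,w$ actually lie on $\ell(x,z)$) by direct application of Lemma \ref{law-of-cos-lemma} and the triangle inequality; in these cases the hypothesis at $z$ is either vacuous or puts $y,z,w$ on a common line. In the main case, all four of $x,y,z,w$ are distinct and $y,w$ lie on opposite sides of $\ell(x,z)$, so Proposition \ref{plane-angle-prop} upgrades the hypothesis to $\angle xzy + \angle yzw + \angle wzx = 2\pi$, and Proposition \ref{interior-of-triangle-angle-prop} splits the three angles of $\triangle xyw$ at $z$; in particular $\angle xyz + \angle zyw = \angle xyw$ and $\angle xwz + \angle zwy = \angle xwy$. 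Write $a = d_{\kappa}(x,y)$, $b = d_{\kappa}(y,z)$, $c = d_{\kappa}(z,w)$, $d = d_{\kappa}(w,x)$, $e = d_{\kappa}(y,w)$, $e' = d_{\kappa}(y',w')$, and $p = d_{\kappa}(x,z)$.

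Next I would construct $z^{*}\in M_{\kappa}^{2}$ on the same side of $\ell(y',w')$ as $x'$, satisfying $d_{\kappa}(y',z^{*}) = b$ and $d_{\kappa}(z^{*},w') = c$; existence is guaranteed by the triangle inequality in $\triangle y'z'w'$ (which holds since the primed configuration is given). Either $z' = z^{*}$, or $z'$ is the reflection of $z^{*}$ across $\ell(y',w')$. In the latter case, the geodesic segment $[x',z']$ must cross $\ell(y',w')$ at some point $P$, and since reflection across $\ell(y',w')$ is an isometry of $M_{\kappa}^{2}$ that fixes $\ell(y',w')$ pointwise, we have $d_{\kappa}(P,z') = d_{\kappa}(P,z^{*})$, whence $d_{\kappa}(x',z') = d_{\kappa}(x',P) + d_{\kappa}(P,z^{*}) \geq d_{\kappa}(x',z^{*})$ by the triangle inequality. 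Thus it suffices to show $p \leq d_{\kappa}(x',z^{*})$.

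The core of the argument is then a monotonicity statement: among $4$-point configurations in $M_{\kappa}^{2}$ with fixed side lengths $(a,b,c,d)$ and $z$ interior to $\triangle xyw$ on the $x$-side of $\ell(y,w)$, the diagonal $p = d_{\kappa}(x,z)$ is non-decreasing in $e = d_{\kappa}(y,w)$. By Lemma \ref{law-of-cos-lemma} applied to $\triangle xyz$ (with the two sides $a,b$ adjacent to the angle at $y$), this is equivalent to monotonicity of $\angle xyz$ in $e$. Using the splitting $\angle xyz = \angle xyw - \angle zyw$ from Proposition \ref{interior-of-triangle-angle-prop}, the task becomes to show that as $e$ grows, the hinge angle $\angle xyw$ in $\triangle xyw$ (sides $(a,e,d)$) grows at least as fast as the hinge angle $\angle zyw$ in $\triangle zyw$ (sides $(b,e,c)$). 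For $\kappa = 0$ a direct computation yields $\tfrac{d}{de}(\angle xyw - \angle zyw) = (\cot\angle zwy - \cot\angle xwy)/e$, which is non-negative because $\angle zwy \leq \angle xwy$, again by the splitting at $w$ in Proposition \ref{interior-of-triangle-angle-prop}. For $\kappa \neq 0$ the same identity holds with $\sin(\sqrt{|\kappa|}\,e)/\sqrt{|\kappa|}$ (or its hyperbolic analogue) in place of $e$, giving the same sign, and integrating from $e$ to $e'$ produces the desired inequality.

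The main obstacle is keeping the deformation inside the reflex-at-$z$ regime: a priori as $e$ varies from its original value to $e'$, the point $z$ could exit the interior of $\triangle xyw$ through one of its sides, in which case the splitting identities of Proposition \ref{interior-of-triangle-angle-prop} no longer apply directly. I would handle this by subdividing the parameter interval at such transition values and reapplying the monotonicity argument on each sub-interval, where after a transition one side of the triangle is effectively replaced (via a reflection) by another, so the same angle-comparison argument still gives a non-negative derivative. Additional care for $\kappa > 0$ is needed so that the perimeters of all varying triangles remain strictly below $2D_{\kappa}$ throughout, which is where the perimeter hypothesis of the lemma is used.
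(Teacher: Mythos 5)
Your overall route is genuinely different from the paper's: the paper argues synthetically, transplanting a point $\tilde w$ with $d_{\kappa}(z,\tilde w)=d_{\kappa}(z,w)$ and $\angle yz\tilde w=\angle y'z'w'$ into the unprimed picture and then chaining hinge comparisons (Lemma \ref{law-of-cos-lemma}) with the angle-addition facts of Section \ref{angle-sec} to get $\angle zyx\le\angle z'y'x'$ directly, with the boundary case $z\in\ell(x,y)\cup\ell(y,w)\cup\ell(w,x)$ handled by Proposition \ref{triangle-boundary-prop}; no continuity or calculus appears. Your reflection reduction to $z^{*}$ is correct, and your derivative identity is correct in all three curvature cases (on the unit sphere one indeed gets $\tfrac{d}{de}\angle xyw=-\cot(\angle xwy)/\sin e$), so on any parameter interval where $z$ stays weakly inside $\triangle xyw$ your argument does give $p(e)=d_{\kappa}(x,z)$ non-decreasing.

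The gap is your treatment of the transitions, and it is a genuine one: the claim that after $z$ exits through a side ``the same angle-comparison argument still gives a non-negative derivative'' is false. Once $z$ crosses $\ell(x,y)$ to the side opposite $w$, the splitting at $y$ becomes $\angle zyw=\angle zyx+\angle xyw$, so $\angle xyz=\angle zyw-\angle xyw$ and your own identity yields $\tfrac{d}{de}\angle xyz=(\cot\angle xwy-\cot\angle zwy)/e\le 0$, because $\angle zwy\le\angle xwy$ persists just after the crossing; hence $p$ becomes non-increasing there, the opposite sign. Concretely ($\kappa=0$): with $a=d=\sqrt2$, $b=\sqrt{0.68}$, $c=\sqrt{3.88}$, at $e=2$ one realizes $y=(0,0)$, $w=(2,0)$, $x=(1,1)$, $z=(0.2,0.8)$ (so $z$ lies across $\ell(x,y)$ from $w$) with $p=\sqrt{0.68}\approx 0.825$, while at $e=2.1$ one computes $p\approx 0.782$. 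So subdividing the interval and ``reapplying the monotonicity argument'' cannot work as stated. What actually saves your strategy is a fact you did not prove: starting from a strictly interior configuration, no transition ever occurs as $e$ increases. Indeed, if $z$ first met $\ell(x,y)$ at $e_t$, the contact point would have to lie on $[x,y]$ (contact along the ray opposite $R_{yx}$ would force $\angle xyw=\pi$, i.e.\ $d=a+e$, impossible once $e>e_0$), so $p(e_t)=|a-b|$, the global minimum of $p$; but your monotonicity on $[e_0,e_t)$ gives $p(e_t)\ge p(e_0)>|a-b|$, a contradiction — similarly for $\ell(x,w)$, while $z$ can reach $\ell(y,w)$ and the two triangles can degenerate only at $e=b+c$ and $e=a+d$, both $\ge e'$. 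With this persistence lemma supplied, and the weakly interior initial cases (e.g.\ $z\in[x,y]$) treated separately as in the paper's second case, your deformation proof closes; as written, it does not.
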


\begin{figure}[htbp]
\centering\begin{tikzpicture}[scale=0.5]
\draw (0,0) -- (2,2);
\draw (0,0) -- (2,5);
\draw (2,5) -- (6,0);
\draw (2,2) -- (6,0);
\draw (2,2) -- (6.47,1.8);
\node [below left] at (0,0) {$y$};
\node [below right] at (6,0) {$w$};
\node [above] at (2,2) {$z$};
\node [above] at (2,5) {$x$};
\node [above right] at (6.47,1.8) {$\tilde{w}$};
\draw (12,0) -- (14.46,4.79);
\draw (14.46,4.79) -- (18.71,0);
\draw (12,0) -- (14.46,1.39);
\draw (14.46,1.39) -- (18.71,0);
\node [below left] at (12,0) {$y'$};
\node [below right] at (18.71,0) {$w'$};
\node [above] at (14.46,1.39) {$z'$};
\node [above] at (14.46,4.79) {$x'$};
\end{tikzpicture}
\caption{Proof of Lemma \ref{larger-larger-lemma}.}\label{larger-fig}
\end{figure}

\begin{proof}
We first consider the case in which $x$, $y$, $z$ and $w$ are not distinct. 
If $x=y$, then $x'=y'$ since $d_{\kappa}(x,y)=d_{\kappa}(x',y')$, and therefore 
\begin{equation*}
d_{\kappa}(x,z)=d_{\kappa}(y,z)=d_{\kappa}(y',z')=d_{\kappa}(x',z').
\end{equation*}
Similarly, if one of the equalities $x=w$, $y=z$ or $z=w$ holds, then we have 
\begin{equation*}
d_{\kappa}(x,z)=d_{\kappa}(x',z').
\end{equation*} 
If $x=z$, then the inequality $d_{\kappa}(x,z)\leq d_{\kappa}(x',z')$ holds clearly. 
So we may assume that 
\begin{equation*}
x\neq y,\quad x\neq z,\quad x\neq w,\quad y\neq z,\quad z\neq w.
\end{equation*}
Suppose that $y=w$. 
Then $y\in\ell (x,z)$ because $y$ and $w$ do not lie on the same side of $\ell (x,z)$ by hypothesis, and therefore we have 
$\angle xzy=\angle xzw=0$ or $\angle xzy=\angle xzw=\pi$. 
Because $\pi\leq\angle yzx+\angle xzw$ by hypothesis, we have $\angle xzy=\angle xzw=\pi$, and thus 
\begin{equation}\label{larger-y-in-ray-zx}
y\in\overline{R_{zx}}\backslash\{ z\} .
\end{equation} 
It follows that 
\begin{equation}\label{larger-xz-zy-diameter}
d_{\kappa}(x,z)+d_{\kappa}(z,y)<D_{\kappa}
\end{equation}
because otherwise $\kappa$ would be greater than $0$, and 
$d_{\kappa}(x,z)+d_{\kappa}(z,y)+d_{\kappa}(y,x)$ would be equal to $2D_{\kappa}$, 
which would imply that 
\begin{equation*}
d_{\kappa}(x,y)+d_{\kappa}(y,z)+d_{\kappa}(z,w)+d_{\kappa}(w,x)
\geq
d_{\kappa}(x,y)+d_{\kappa}(y,z)+d_{\kappa}(z,x)
=2D_{\kappa},
\end{equation*}
contradicting the hypothesis. 
It follows from \eqref{larger-y-in-ray-zx} and \eqref{larger-xz-zy-diameter}
that $z\in\lbrack x,y\rbrack$, and thus 
\begin{equation*}
d_{\kappa}(x,z)=d_{\kappa}(x,y)-d_{\kappa}(y,z)
=d_{\kappa}(x',y')-d_{\kappa}(y',z')\leq d_{\kappa}(x',z').
\end{equation*}
So henceforth we assume that $x$, $y$, $z$ and $w$ are distinct. 
We consider two cases. 

\textsc{Case 1}: 
{\em $z\not\in\ell (x,y)\cup\ell (y,w)\cup\ell (w,x)$.} 
Let $\tilde{w}\in M_{\kappa}^2$ be the point such that 
\begin{equation*}
d_{\kappa}(z,\tilde{w})=d_{\kappa}(z,w),\quad\angle yz\tilde{w} =\angle y'z'w' ,
\end{equation*}
and $\tilde{w}$ is not on the opposite side of $\ell (y,z)$ from $w$, 
as shown in \textsc{Figure} \ref{larger-fig}. 
Then 
\begin{equation}\label{larger-yzw-cong}
d_{\kappa}(y,\tilde{w})=d_{\kappa}(y',w'),\quad 
\angle \tilde{w}yz=\angle w'y'z'
\end{equation}
because the triangle with vertices $y$, $z$ and $\tilde{w}$ is congruent to 
that with vertices $y'$, $z'$ and $w'$. 
Since $d_{\kappa}(y,w)\leq d_{\kappa}(y',w')$ by hypothesis, 
Lemma \ref{law-of-cos-lemma} implies that 
\begin{equation*}
\angle yzw\leq\angle y'z'w' =\angle yz\tilde{w}, 
\end{equation*}
and therefore Proposition \ref{angle-equality-prop} implies that 
\begin{equation}\label{larger-yzw-wztildew-eq}
\angle yzw +\angle wz\tilde{w}
=
\angle yz\tilde{w}
\leq\pi .
\end{equation}
Because $\pi\leq\angle yzx+\angle xzw$, 
and $y$ and $w$ are not on the same side of $\ell (x,z)$, Corollary \ref{plane-angle-coro} implies that 
$\pi\leq\angle yzw+\angle wzx$. 
Combining this with \eqref{larger-yzw-wztildew-eq} 
yields $\angle wz\tilde{w}\leq\angle wzx$. 
Furthermore, $\tilde{w}$ and $x$ are not on opposite sides of $\ell (z,w)$ 
because 
\eqref{larger-yzw-wztildew-eq} implies that $\tilde{w}$ is not on the same side of $\ell (z,w)$ as $y$ by Proposition \ref{angle-equality-necessary-conditions-prop}, 
the hypothesis implies that $x$ is not on the same side of $\ell (z,w)$ as $y$ by Corollary \ref{plane-angle-coro}, and 
$y\not\in\ell (z,w)$ by the assumption of \textsc{Case 1}. 
Therefore, Proposition \ref{angle-equality-prop} implies that 
\begin{equation}\label{larger-wztildew-tildewzx-eq}
\angle wzx
=
\angle wz\tilde{w}+\angle\tilde{w}zx.
\end{equation}
We have $\angle\tilde{w}zx\leq\angle wzx$ by 
\eqref{larger-wztildew-tildewzx-eq}, and therefore 
Lemma \ref{law-of-cos-lemma} implies that 
\begin{equation*}
d_{\kappa}(\tilde{w},x)\leq d_{\kappa}(w,x)=d_{\kappa}(w' ,x' ).
\end{equation*}
Using Lemma \ref{law-of-cos-lemma} again, this implies that 
\begin{equation}\label{larger-yxz}
\angle \tilde{w}yx\leq\angle w' y' x' .
\end{equation}
Because the hypothesis implies $\angle yzw +\angle wzx +\angle xzy=2\pi$ by Proposition \ref{plane-angle-prop}, 
it follows from \eqref{larger-yzw-wztildew-eq} and \eqref{larger-wztildew-tildewzx-eq} that 
\begin{equation*}
\angle \tilde{w}zx +\angle xzy=2\pi-\angle yz\tilde{w}\geq\pi .
\end{equation*}
Furthermore, $y$ and $\tilde{w}$ are not on the same side of $\ell (x,z)$ 
because 
$y$ is not on the same side of $\ell (x,z)$ as $w$ by hypothesis, 
\eqref{larger-wztildew-tildewzx-eq} implies that $\tilde{w}$ is not on the opposite side of $\ell (x,z)$ from $w$ 
by Proposition \ref{angle-equality-necessary-conditions-prop}, 
and $w\not\in\ell (x,z)$ by the assumption of \textsc{Case 1}. 
Therefore, Proposition \ref{interior-of-triangle-angle-prop} implies that 
\begin{equation}\label{larger-tildewyz-zyx}
\angle \tilde{w}yx=\angle\tilde{w}yz+\angle zyx.
\end{equation}
We also have 
\begin{equation}\label{larger-wyz-zyx}
\angle w'y'x'
\leq
\angle w' y' z' +\angle z'y'x'
\end{equation}
by Proposition \ref{angle-triangle-inequality-prop}. 
Combining \eqref{larger-yzw-cong}, \eqref{larger-yxz}, \eqref{larger-tildewyz-zyx} and \eqref{larger-wyz-zyx}, we obtain
\begin{equation*}
\angle zyx
=
\angle \tilde{w}yx-\angle\tilde{w}yz
\leq
\angle w' y' x'-\angle\tilde{w}yz
=
\angle w' y' x' -\angle w' y' z'
\leq
\angle z'y'x' ,
\end{equation*}
and therefore Lemma \ref{law-of-cos-lemma} implies that $d_{\kappa}(x,z)\leq d_{\kappa}(x',z' )$.

\textsc{Case 2}: 
{\em $z\in\ell (x,y)\cup\ell (y,w)\cup\ell (w,x)$.} 
In this case, $z\in\lbrack x,y\rbrack\cup\lbrack y,w\rbrack\cup\lbrack w,x\rbrack$ by Proposition \ref{triangle-boundary-prop}. 
If $z\in\lbrack x,y\rbrack$, then 
\begin{equation*}
d_{\kappa}(x,z)
=
d_{\kappa}(x,y)-d_{\kappa}(y,z)
=
d_{\kappa}(x',y')-d_{\kappa}(y',z')
\leq
d_{\kappa}(x',z'). 
\end{equation*}
If $z\in\lbrack w,x\rbrack$, then we obtain $d_{\kappa}(x,z)\leq d_{\kappa}(x',z')$ similarly. 
So assume that $z\in\lbrack y,w\rbrack$. 
Then 
\begin{equation*}
d_{\kappa}(y,w)
\leq
d_{\kappa}(y',w')
\leq
d_{\kappa}(y',z')+d_{\kappa}(z',w')
=
d_{\kappa}(y,z)+d_{\kappa}(z,w)
=
d_{\kappa}(y,w), 
\end{equation*}
and thus $d_{\kappa}(y,w)=d_{\kappa}(y',w')$. 
It follows that the triangle with vertices $x$, $y$ and $w$ is congruent to 
the triangle with vertices $x'$, $y'$ and $w'$, and that 
$z\in\lbrack y,w\rbrack$ corresponds to $z'\in\lbrack y',w'\rbrack$ under this congruence, 
which implies $d_{\kappa}(x,z)=d_{\kappa}(x',z')$. 
\end{proof}

\begin{remark}
Suppose $x,y,z,w,x',y',z',w' \in M_{\kappa}^2$ are points that satisfy the hypothesis of Lemma \ref{larger-larger-lemma}. 
Alexandrov's lemma \cite[p.25]{BH} states that if the identity 
\begin{equation*}
d_{\kappa}(y',w')=d_{\kappa}(y',z')+d_{\kappa}(z',w')
\end{equation*}
holds in addition, 
then we have $d_{\kappa}(x,z)\leq d_{\kappa}(x',z')$. 
\end{remark}

Next we prove the following lemma, which looks similar to the previous one.

\begin{lemma}\label{larger-smaller-lemma}
Let $\kappa\in\mathbb{R}$. 
Suppose $x,y,z,w,x',y',z',w'\in M_{\kappa}^2$ are points such that 
\begin{align*}
&d_{\kappa}(x',y')+d_{\kappa}(y',z')+d_{\kappa}(z',w')+d_{\kappa}(w',x')<2D_{\kappa},\\
&d_{\kappa} (x,y)=d_{\kappa}(x',y'),\quad d_{\kappa} (y,z)=d_{\kappa} (y',z'),\quad d_{\kappa}(z,w)= d_{\kappa}(z',w'),\\
&d_{\kappa}(w,x)=d_{\kappa}(w',x'),\quad d_{\kappa}(x',z')\leq d_{\kappa}(x,z). 
\end{align*}
If $\lbrack x',z'\rbrack\cap\lbrack y',w' \rbrack\neq\emptyset$, then 
\begin{equation*}
d_{\kappa} (y,w)\leq d_{\kappa}(y',w').
\end{equation*}
\end{lemma}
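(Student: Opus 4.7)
My plan follows the strategy used for Lemma \ref{larger-larger-lemma}. After disposing of the degenerate cases where the four points $x,y,z,w$ fail to be distinct (handled using the equalities among the side lengths together with the triangle inequality, as in the first portion of that proof), I would fix an auxiliary point $\tilde{w}\in M_{\kappa}^{2}$ characterized by the congruence $\triangle yz\tilde{w}\cong\triangle y'z'w'$ with $\tilde{w}$ on the same side of $\ell(y,z)$ as $w$. Then $d_{\kappa}(y,\tilde{w})=d_{\kappa}(y',w')$, $d_{\kappa}(z,\tilde{w})=d_{\kappa}(z,w)$, and $\angle yz\tilde{w}=\angle y'z'w'$. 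Because the triangles $\triangle yzw$ and $\triangle yz\tilde{w}$ share the two sides $d_{\kappa}(y,z)$ and $d_{\kappa}(z,w)=d_{\kappa}(z,\tilde{w})$, Lemma \ref{law-of-cos-lemma} applied at $z$ reduces the target conclusion $d_{\kappa}(y,w)\leq d_{\kappa}(y',w')$ to the angle comparison $\angle yzw\leq\angle y'z'w'$.

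The hypothesis $[x',z']\cap[y',w']\neq\emptyset$ supplies, via Proposition \ref{diagonal-angle-prop}, the decomposition $\angle y'z'w'=\angle y'z'x'+\angle x'z'w'$ (and, by the symmetry of the two diagonals established in Proposition \ref{convex-quadrilateral-prop}, the analogous decomposition $\angle y'x'w'=\angle y'x'z'+\angle z'x'w'$ at $x'$). On the unprimed side, Proposition \ref{angle-triangle-inequality-prop} gives the sub-additive bound $\angle yzw\leq\angle yzx+\angle xzw$, with an analogous bound at $x$. The task becomes to prove either $\angle yzx+\angle xzw\leq\angle y'z'x'+\angle x'z'w'$ or, via Lemma \ref{law-of-cos-lemma} at $x$ applied to $\triangle yxw$ vs.\ $\triangle y'x'w'$, the companion inequality $\angle yxz+\angle zxw\leq\angle y'x'z'+\angle z'x'w'$. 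For each of the two sub-triangles $\triangle xyz$ vs.\ $\triangle x'y'z'$ and $\triangle xzw$ vs.\ $\triangle x'z'w'$ (each with two sides equal and the third, namely $d_{\kappa}(x,z)\geq d_{\kappa}(x',z')$, larger in the unprimed), Lemma \ref{from-law-of-cos-lemma} supplies the individual angle comparison at whichever vertex ($z$ or $x$) has the shorter adjacent fixed side. When the two sub-triangles allow the comparison at the same vertex, summing and invoking the decomposition identity in the primed configuration finishes the argument.

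The main obstacle is the mixed case in which Lemma \ref{from-law-of-cos-lemma}'s monotonicity condition is met at $z$ for one of the two sub-triangles but only at $x$ for the other, so that the individual comparisons do not add up at a common vertex. To handle this I would introduce a second auxiliary point $\tilde{x}\in M_{\kappa}^{2}$ by the congruence $\triangle yz\tilde{x}\cong\triangle y'z'x'$ (with $\tilde{x}$ on the same side of $\ell(y,z)$ as $x$), and argue in parallel with the $\tilde{w}$ step of Lemma \ref{larger-larger-lemma}'s proof: the equality $d_{\kappa}(y,\tilde{x})=d_{\kappa}(y,x)$ together with $d_{\kappa}(z,\tilde{x})=d_{\kappa}(z',x')\leq d_{\kappa}(z,x)$ localizes $\tilde{x}$ in the angular region between the rays $R_{yz}$ and $R_{yx}$, so that Proposition \ref{angle-equality-prop} and Proposition \ref{interior-of-triangle-angle-prop} apply to migrate the comparison across the diagonal and produce an inequality at a single vertex. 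The key technical point will be verifying that $\tilde{x}$ always lies on the appropriate side of each of the lines $\ell(y,z)$, $\ell(w,z)$, and (once the construction is combined with $\tilde{w}$) $\ell(y,w)$, so that the additivity identities of Section \ref{angle-sec} are available; this is precisely the reason for the delicate angle-measure machinery developed in that section, and I expect it to be the chief source of case-checking in the proof.
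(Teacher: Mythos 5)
Your overall architecture --- reduce the conclusion to an angle comparison via a congruent auxiliary triangle, split the primed angle across the diagonal with Proposition \ref{diagonal-angle-prop}, and control the two sub-triangles with Lemma \ref{from-law-of-cos-lemma} --- is the paper's, up to a mirror symmetry, and your ``same-vertex'' branch is sound. But the resolution you propose for the mixed case, which you correctly identify as the crux, does not go through as described: you place the second auxiliary point $\tilde{x}$ in the \emph{unprimed} configuration, while every additivity statement you plan to invoke there (Propositions \ref{angle-equality-prop} and \ref{interior-of-triangle-angle-prop}, and implicitly Proposition \ref{angle-betweenness-prop}) needs positional hypotheses that in this lemma are supplied only by $[x',z']\cap[y',w']\neq\emptyset$ --- a hypothesis living entirely in the primed figure. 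The unprimed quadrilateral may be badly non-convex (all we know is $d_{\kappa}(x,z)\geq d_{\kappa}(x',z')$), so the position of $w$ relative to the rays at $y$ and $z$ is uncontrolled. Concretely, the localization of $\tilde{x}$ between $R_{yz}$ and $R_{yx}$ only yields $\angle xyz=\angle xy\tilde{x}+\angle \tilde{x}yz$; what your single-vertex reduction actually requires is $\angle \tilde{x}zw\leq\angle x'z'w'$, equivalently (by Lemma \ref{law-of-cos-lemma}, since $d_{\kappa}(z,\tilde{x})=d_{\kappa}(z',x')$ and $d_{\kappa}(z,w)=d_{\kappa}(z',w')$) the bound $d_{\kappa}(\tilde{x},w)\leq d_{\kappa}(x,w)$, and nothing in the unprimed picture forces this. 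The paper transplants in the opposite direction: it builds a copy $\tilde{z}$ of the unprimed triangle $xyz$ \emph{inside the primed figure} ($d_{\kappa}(y',\tilde{z})=d_{\kappa}(y',z')$, $\angle x'y'\tilde{z}=\angle xyz$), where Proposition \ref{diagonal-angle-prop} at $y'$ and at $x'$ together with Proposition \ref{angle-betweenness-prop} supply exactly the additivity needed to obtain first $d_{\kappa}(w',z')\leq d_{\kappa}(w',\tilde{z})$ and then $\angle yxw\leq\angle y'x'\tilde{z}+\angle\tilde{z}x'w'=\angle y'x'w'$; Lemma \ref{from-law-of-cos-lemma} is invoked only once, after the single normalization $d_{\kappa}(x',y')+d_{\kappa}(y',z')<D_{\kappa}$, $d_{\kappa}(x',y')\leq d_{\kappa}(y',z')$, so the mixed case never arises.

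Two secondary gaps. First, for $\kappa>0$ Lemma \ref{from-law-of-cos-lemma} requires the half-perimeter bound $<D_{\kappa}$ on the triangle with the smaller third side (here the primed one); the hypothesis only guarantees that \emph{one} of $d_{\kappa}(x',y')+d_{\kappa}(y',z')$ and $d_{\kappa}(z',w')+d_{\kappa}(w',x')$ is less than $D_{\kappa}$, so even your same-vertex branch can be blocked for one of the two sub-triangles --- another reason the paper's proof applies that lemma only once. Second, disposing of coincidences among $x,y,z,w$ is not enough: your main argument needs the \emph{primed} points distinct, and $x'=z'$ or $y'=w'$ can occur with $x,y,z,w$ all distinct, since only $d_{\kappa}(x',z')\leq d_{\kappa}(x,z)$ is assumed. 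These cases need their own (short but not vacuous) arguments; e.g., if $y'=w'$, the diagonal hypothesis forces $y',w'\in[x',z']$, whence $d_{\kappa}(x,z)\leq d_{\kappa}(x,y)+d_{\kappa}(y,z)=d_{\kappa}(x',z')\leq d_{\kappa}(x,z)$ places $y$ and $w$ on $[x,z]$ and gives equality, as in the paper.
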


\begin{figure}[htbp]
\centering\begin{tikzpicture}[scale=0.5]
\draw (0,0) -- (-1.6,-2.33);
\draw (-1.6,-2.33) -- (0,-6.5);
\draw (0,-6.5) -- (4.86,-2.33);
\draw (4.86,-2.33) -- (0,0);
\draw (12,0) -- (10,-2);
\draw (10,-2) -- (12,-6);
\draw (12,-6) -- (17,-2);
\draw (17,-2) -- (12,0);
\draw (10,-2) -- (10.83,-6.39);
\draw (0,0) -- (0,-6.5);
\draw (12,0) -- (12,-6);
\node [above] at (0,0) {$x$};
\node [left] at (-1.6,-2.33) {$y$};
\node [below] at (0,-6.5) {$z$};
\node [right] at (4.86,-2.33) {$w$};
\node [above] at (12,0) {$x'$};
\node [left] at (10,-2) {$y'$};
\node [below] at (12,-6) {$z'$};
\node [right] at (17,-2) {$w'$};
\node [below left] at (10.83,-6.39) {$\tilde{z}$};
\end{tikzpicture}
\caption{Proof of Lemma \ref{larger-smaller-lemma}.}\label{larger-smaller-fig}
\end{figure}

\begin{proof}
We first consider the case in which $x'$, $y'$, $z'$ and $w'$ are not distinct. 
If $x'=y'$, then $x=y$ since $d_{\kappa}(x,y)=d_{\kappa}(x',y')$, and therefore 
\begin{equation*}
d_{\kappa}(y,w)=d_{\kappa}(x,w)=d_{\kappa}(x',w')=d_{\kappa}(y',w').
\end{equation*}
Similarly, if one of the equalities $y'=z'$, $x'=w'$ or $z'=w'$ holds, then 
we have 
\begin{equation*}
d_{\kappa}(y,w)=d_{\kappa}(y',w').
\end{equation*}
If $x'=z'$, then $x'\in\lbrack y',w'\rbrack$ because 
$\lbrack x',z'\rbrack\cap\lbrack y',w' \rbrack\neq\emptyset$ by hypothesis, and therefore 
\begin{equation*}
d_{\kappa}(y,w)
\leq
d_{\kappa}(y,x)+d_{\kappa}(x,w)
=
d_{\kappa}(y',x')+d_{\kappa}(x',w')
=
d_{\kappa}(y',w'). 
\end{equation*}
Suppose that $y'=w'$. 
Then $y'\in\lbrack x',z'\rbrack$ and $w'\in\lbrack x',z'\rbrack$ because 
$\lbrack x',z'\rbrack\cap\lbrack y',w' \rbrack\neq\emptyset$. 
It follows that 
\begin{align*}
d_{\kappa}(x,z)
&\leq
d_{\kappa}(x,y)+d_{\kappa}(y,z)
=
d_{\kappa}(x',y')+d_{\kappa}(y',z')
=
d_{\kappa}(x',z')
\leq
d_{\kappa}(x,z),\\
d_{\kappa}(x,z)
&\leq
d_{\kappa}(x,w)+d_{\kappa}(w,z)
=
d_{\kappa}(x',w')+d_{\kappa}(w',z')
=
d_{\kappa}(x',z')
\leq
d_{\kappa}(x,z),
\end{align*}
and thus 
\begin{equation*}
d_{\kappa}(x,z)=d_{\kappa}(x,y)+d_{\kappa}(y,z),\quad
d_{\kappa}(x,z)=d_{\kappa}(x,w)+d_{\kappa}(w,z).
\end{equation*}
It follows that $y\in\lbrack x,z\rbrack$ and $w\in\lbrack x,z\rbrack$, and therefore 
\begin{equation*}
d_{\kappa}(y,w)
=
|d_{\kappa}(x,y)-d_{\kappa}(x,w)|
=
|d_{\kappa}(x',y')-d_{\kappa}(x',w')|
=
d_{\kappa}(y',w').
\end{equation*}
So henceforth we assume that $x'$, $y'$, $z'$ and $w'$ are distinct. 

Because $d_{\kappa}(x',y')+d_{\kappa}(y',z')+d_{\kappa}(z',w')+d_{\kappa}(w',x')<2D_{\kappa}$ by hypothesis, 
one of the inequalities 
$d_{\kappa}(x',y')+d_{\kappa}(y',z')<D_{\kappa}$ or 
$d_{\kappa}(z',w')+d_{\kappa}(w',x')<D_{\kappa}$ holds. 
We may assume without loss of generality that 
\begin{equation*}
d_{\kappa}(x',y')+d_{\kappa}(y',z')<D_{\kappa},\quad
d_{\kappa}(x',y')\leq d_{\kappa}(y',z'). 
\end{equation*}
Then we have 
\begin{equation}\label{quadrangles-zxy-ineq}
\angle yxz
\leq\angle y'x'z'
\end{equation}
by Lemma \ref{from-law-of-cos-lemma}. 
Let $\tilde{z}\in M^2_{\kappa}$ be the point such that 
\begin{equation*}
d_{\kappa}(y',\tilde{z})=d_{\kappa}(y',z'), \quad\angle x'y'\tilde{z} =\angle xyz,
\end{equation*}
and $\tilde{z}$ is not on the opposite side of $\ell (x',y')$ from $z'$, 
as shown in \textsc{Figure} \ref{larger-smaller-fig}. 
Then 
\begin{equation}\label{quadrangles-xyz-cong-eqs}
d_{\kappa}(x',\tilde{z})=d_{\kappa}(x,z),\quad 
\angle y'x'\tilde{z}=\angle yxz
\end{equation}
because the triangle with vertices $x'$, $y'$ and $\tilde{z}$ is congruent to 
that with vertices $x$, $y$ and $z$. 
Since $d_{\kappa}(x',z')\leq d_{\kappa}(x,z)$, 
Lemma \ref{law-of-cos-lemma} implies that 
\begin{equation}\label{quadrangles-xyz-eq}
\angle x'y'z'\leq\angle xyz =\angle x'y'\tilde{z}.
\end{equation}
Because $\tilde{z}$ is not on the opposite side of $\ell (x',y')$ from $z'$, 
\eqref{quadrangles-xyz-eq} implies 
\begin{equation}\label{quadrangles-xyz-zytildez-eq}
\angle x'y'\tilde{z}
=
\angle x'y'z' +\angle z'y'\tilde{z}
\end{equation}
by Proposition \ref{angle-equality-prop}. 
The hypothesis that $\lbrack x',z'\rbrack\cap\lbrack y',w'\rbrack\neq\emptyset$ implies 
\begin{equation}\label{quadrangles-xyw-wyz-eq}
\angle x'y'z'
=
\angle x'y'w'+\angle w'y'z'
\end{equation}
by Proposition \ref{diagonal-angle-prop}. 
By Proposition \ref{angle-betweenness-prop}, 
\eqref{quadrangles-xyz-zytildez-eq} and \eqref{quadrangles-xyw-wyz-eq} imply 
\begin{equation}\label{quadrangles-xyw-wytildez}
\angle x'y'\tilde{z}
=
\angle x'y'w'+\angle w'y'\tilde{z}. 
\end{equation}
Combining \eqref{quadrangles-xyz-eq}, \eqref{quadrangles-xyw-wyz-eq} and \eqref{quadrangles-xyw-wytildez}, we obtain 
$\angle w'y'z'\leq\angle w'y'\tilde{z}$, 
and therefore Lemma \ref{law-of-cos-lemma} implies that 
\begin{equation*}
d_{\kappa}(w,z)=d_{\kappa}(w',z')\leq d_{\kappa}(w',\tilde{z}).
\end{equation*}
Using Lemma \ref{law-of-cos-lemma} again, this implies 
\begin{equation}\label{quadrangles-wxz-ineq}
\angle zxw\leq\angle \tilde{z}x'w' .
\end{equation}
The hypothesis that $\lbrack x',z'\rbrack\cap\lbrack y',w'\rbrack\neq\emptyset$ implies 
\begin{equation}\label{quadrangles-wxz-zxy-eq}
\angle y'x'w'
=
\angle y'x'z' +\angle z'x'w'
\end{equation}
by Proposition \ref{diagonal-angle-prop}. 
It follows from \eqref{quadrangles-zxy-ineq} and \eqref{quadrangles-xyz-cong-eqs} that 
$\angle y'x'\tilde{z}\leq\angle y'x'z'$, which implies 
\begin{equation}\label{quadrangles-zxtildez-tildezxy-eq}
\angle y'x'z'
=
\angle y'x'\tilde{z} +\angle \tilde{z}x'z'
\end{equation}
by Proposition \ref{angle-equality-prop} because $\tilde{z}$ is not on the opposite side of $\ell (x',y')$ from $z'$. 
By Proposition \ref{angle-betweenness-prop}, 
\eqref{quadrangles-wxz-zxy-eq} and \eqref{quadrangles-zxtildez-tildezxy-eq} imply 
\begin{equation}\label{quadrangles-wxtildez-tildezxy-eq}
\angle y'x'w'
=
\angle y'x'\tilde{z}+\angle \tilde{z}x'w'.
\end{equation}
We also have 
\begin{equation}\label{quadrangles-wdashxdashyadash-ineq}
\angle yxw
\leq
\angle yxz+\angle zxw
\end{equation}
by Proposition \ref{angle-triangle-inequality-prop}. 
Combining \eqref{quadrangles-xyz-cong-eqs}, \eqref{quadrangles-wxz-ineq}, \eqref{quadrangles-wxtildez-tildezxy-eq} and 
\eqref{quadrangles-wdashxdashyadash-ineq}, we obtain 
\begin{equation*}
\angle yxw
\leq
\angle yxz+\angle zxw
\leq
\angle y'x'\tilde{z}+\angle \tilde{z}x'w'
=
\angle y'x'w', 
\end{equation*}
and therefore Lemma \ref{law-of-cos-lemma} implies that 
$d_{\kappa}(y,w)\leq d_{\kappa}(y',w')$. 
\end{proof}

Lemma \ref{larger-larger-lemma} and Lemma \ref{larger-smaller-lemma} imply the following corollary.

\begin{corollary}\label{larger-smaller-coro}
Suppose $x,y,z,w,x',y',z',w' \in M_{\kappa}^2$ are points that satisfy the hypothesis of Lemma \ref{larger-smaller-lemma}. 
Assume in addition that $x\neq z$, and that $y$ and $w$ do not lie on the same side of $\ell (x,z)$. 
Then $\lbrack x,z\rbrack\cap\lbrack y,w\rbrack\neq\emptyset$. 
\end{corollary}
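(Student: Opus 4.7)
The plan is to reduce the corollary, via Proposition \ref{convex-quadrilateral-prop}, to a pair of angle inequalities at $x$ and $z$, and then to derive those inequalities by feeding Lemma \ref{larger-smaller-lemma}'s output into Lemma \ref{larger-larger-lemma}. First I would invoke Lemma \ref{larger-smaller-lemma} directly on the hypothesis to produce
\begin{equation*}
d_{\kappa}(y,w)\leq d_{\kappa}(y',w'),
\end{equation*}
which will serve as the crucial side-length input for Lemma \ref{larger-larger-lemma}.

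Next I would dispose of the degenerate configurations among the unprimed points. If $x=y$, $x=w$, $y=z$, or $z=w$, then $\lbrack x,z\rbrack$ and $\lbrack y,w\rbrack$ already share a common endpoint, so the intersection is nonempty; the case $x=z$ is excluded by hypothesis. Henceforth I may assume $x\neq y$, $x\neq z$, $x\neq w$, $y\neq z$, and $z\neq w$, placing no restriction on whether $y$ equals $w$.

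The heart of the argument is a contradiction: suppose $\lbrack x,z\rbrack\cap\lbrack y,w\rbrack=\emptyset$. The side-matching hypothesis transports the perimeter bound from the primed to the unprimed quadrilateral, so Proposition \ref{convex-quadrilateral-prop} applies; since $y$ and $w$ do not lie on the same side of $\ell(x,z)$ by hypothesis, at least one of $\pi<\angle yxz+\angle zxw$ or $\pi<\angle yzx+\angle xzw$ must hold. The corollary's hypothesis is invariant under the simultaneous swap $x\leftrightarrow z$, $x'\leftrightarrow z'$, so I may assume without loss of generality that $\pi<\angle yzx+\angle xzw$. All hypotheses of Lemma \ref{larger-larger-lemma} are now in place, the side comparison being the one supplied by the first step, so the lemma yields $d_{\kappa}(x,z)\leq d_{\kappa}(x',z')$; combined with the assumed $d_{\kappa}(x',z')\leq d_{\kappa}(x,z)$, this forces $d_{\kappa}(x,z)=d_{\kappa}(x',z')$.

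From this equality together with the matching of the remaining sides, the law of cosines gives the triangle congruences $\triangle xyz\cong\triangle x'y'z'$ and $\triangle xzw\cong\triangle x'z'w'$, and in particular $\angle yzx=\angle y'z'x'$ and $\angle xzw=\angle x'z'w'$. However, Corollary \ref{diagonal-angle-coro} applied to the primed configuration, using $\lbrack x',z'\rbrack\cap\lbrack y',w'\rbrack\neq\emptyset$, gives $\angle y'z'x'+\angle x'z'w'\leq\pi$, whence $\angle yzx+\angle xzw\leq\pi$, contradicting the strict inequality above. I expect the main technical wrinkle to be the bookkeeping verifying that Lemma \ref{larger-larger-lemma} and Corollary \ref{diagonal-angle-coro} apply uniformly, including the borderline case $y=w\in\ell(x,z)$ where $\lbrack y,w\rbrack$ collapses to a single point; both statements are constructed to tolerate this, so no separate treatment should be required.
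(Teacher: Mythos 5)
Your proof is correct and follows essentially the same route as the paper: dispose of the shared-endpoint degeneracies, argue by contradiction via Proposition \ref{convex-quadrilateral-prop}, feed the $d_{\kappa}(y,w)\leq d_{\kappa}(y',w')$ output of Lemma \ref{larger-smaller-lemma} into Lemma \ref{larger-larger-lemma} to force $d_{\kappa}(x,z)=d_{\kappa}(x',z')$, and then contradict the primed intersection hypothesis via the resulting triangle congruences. The only (immaterial) difference is that you close the argument with Corollary \ref{diagonal-angle-coro} on the primed quadrilateral, whereas the paper reapplies Proposition \ref{convex-quadrilateral-prop} to conclude $\lbrack x',z'\rbrack\cap\lbrack y',w'\rbrack=\emptyset$.
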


\begin{proof}
If one of the equalities $x=y$, $x=w$, $y=z$ or $z=w$ holds, then we have 
$\lbrack x,z\rbrack\cap\lbrack y,w\rbrack\neq\emptyset$ clearly. 
So we assume that 
\begin{equation*}
x\neq y,\quad
x\neq z,\quad
x\neq w,\quad
y\neq z,\quad
z\neq w.
\end{equation*}
Suppose for the sake of contradiction that $\lbrack x,z\rbrack\cap\lbrack y,w\rbrack =\emptyset$. 
Then Proposition \ref{convex-quadrilateral-prop} implies that we have 
$\pi <\angle yxz+\angle zxw$ or $\pi <\angle yzx+\angle xzw$. 
Therefore, Lemma \ref{larger-larger-lemma} implies that $d_{\kappa}(x,z)\leq d_{\kappa}(x',z')$ because 
we have $d_{\kappa}(y,w)\leq d_{\kappa}(y',w')$ by Lemma \ref{larger-smaller-lemma}. 
Combining this with the hypothesis that $d_{\kappa}(x',z')\leq d_{\kappa}(x,z)$ yields $d_{\kappa}(x,z)=d_{\kappa}(x',z')$. 
Therefore, the triangle with vertices $x$, $y$ and $z$ is congruent to that with vertices $x'$, $y'$ and $z'$,  
and the triangle with vertices $x$, $w$ and $z$ is congruent to that with vertices $x'$, $w'$ and $z'$. 
It follows that 
\begin{equation*}
\pi <\angle yxz+\angle zxw=\angle y'x'z'+\angle z'x'w'
\end{equation*}
or 
\begin{equation*}
\pi <\angle yzx+\angle xzw=\angle y'z'x'+\angle x'z'w'.
\end{equation*}
Therefore, it follows from Proposition \ref{convex-quadrilateral-prop} that $\lbrack x',z'\rbrack\cap\lbrack y',w'\rbrack =\emptyset$, 
contradicting the hypothesis that $\lbrack x',z'\rbrack\cap\lbrack y',w'\rbrack\neq\emptyset$. 
Thus we have $\lbrack x,z\rbrack\cap\lbrack y,w\rbrack\neq\emptyset$. 
\end{proof}

\section{Proof of Lemma \ref{quadruple-p-lemma}}\label{quadruple-sec}

In this section, we prove Lemma \ref{quadruple-p-lemma}. 

\begin{proof}[Proof of Lemma \ref{quadruple-p-lemma}]
Let $X$, $x$, $y$, $z$, $w$, $x'$, $y'$, $z'$, and $w'$ be as in the hypothesis, and 
fix $p\in\lbrack x',z' \rbrack$. 
If $x' =z'$, then $p=x'$, and therefore 
\begin{equation*}
d_{X}(y,w)\leq d_{X}(y,x)+d_{X}(x,w)\leq d_{\kappa}(y',x')+d_{\kappa}(x',w')=d_{\kappa}(y',p)+d_{\kappa}(p,w').
\end{equation*}
So henceforth we assume that $x'\neq z'$. 
Then $x\neq z$ since $d_{\kappa}(x',z')\leq d_X (x,z)$. 
Because $X$ is $\mathrm{Cycl}_4 (\kappa )$, 
there exist $x_0 ,y_0 ,z_0 ,w_0\in M_{\kappa}^2$ such that 
\begin{align*}
&d_{\kappa}(x_0, y_0 )\leq d_{X}(x,y),\quad d_{\kappa}(y_0 ,z_0 )\leq d_{X}(y,z),\quad d_{\kappa}(z_0 ,w_0 )\leq d_{X}(z,w),\\
&d_{\kappa}(w_0 ,x_0 )\leq d_{X}(w,x),\quad d_{X}(x,z)\leq d_{\kappa}(x_0 ,z_0 ),\quad d_{X} (y,w)\leq d_{\kappa}(y_0 ,w_0 ).
\end{align*}
Then 
\begin{align*}
|d_{\kappa}(x',y')&-d_{\kappa}(y',z')|
\leq
d_{\kappa}(x',z')
\leq
d_X (x,z)
\leq
d_{\kappa}(x_0 ,z_0 )\\
&\leq
d_{\kappa}(x_0 ,y_0 )+d_{\kappa}(y_0 ,z_0 )
\leq
d_X (x,y)+d_X (y,z)
\leq
d_{\kappa}(x',y')+d_{\kappa}(y',z'),
\end{align*}
and thus 
\begin{equation*}
|d_{\kappa}(x',y')-d_{\kappa}(y',z')|
\leq
d_{\kappa}(x_0 ,z_0 )
\leq
d_{\kappa}(x',y')+d_{\kappa}(y',z').
\end{equation*}
This guarantees that there exists a point $\tilde{y}\in M_{\kappa}^2$ such that 
\begin{equation*}
d_{\kappa}(x_0 ,\tilde{y})=d_{\kappa}(x',y'),\quad d_{\kappa}(\tilde{y},z_0 )=d_{\kappa}(y',z'). 
\end{equation*}
Similarly, there also exists a point $\tilde{w}\in M_{\kappa}^2$ such that 
\begin{equation*}
d_{\kappa}(x_0 ,\tilde{w})=d_{\kappa} (x',w'),\quad d_{\kappa}(\tilde{w},z_0 )=d_{\kappa}(w',z'). 
\end{equation*}
Clearly we may assume that $\tilde{w}$ does not lie on the same side of $\ell (x_0 ,z_0 )$ as $\tilde{y}$. 
Let $\overline{w}\in M_{\kappa}^2$ be the point such that 
\begin{equation*}
d_{\kappa}(x',\overline{w})=d_{\kappa}(x',w'),\quad
d_{\kappa}(\overline{w},z')=d_{\kappa}(w',z'),
\end{equation*}
and $\overline{w}$ is not on the same side of $\ell (x',z' )$ as $y'$. 
(If $w'$ is not on the same side of $\ell (x',z' )$ as $y'$, then $\overline{w}$ is $w'$ itself.) 
Then it is easily seen that $d_{\kappa}(p,\overline{w})=d_{\kappa}(p,w')$, and therefore 
\begin{equation}\label{quadruple-p-lemma-ydashoverw}
d_{\kappa}(y',\overline{w})
\leq
d_{\kappa}(y',p)+d_{\kappa}(p,\overline{w})
=
d_{\kappa}(y',p)+d_{\kappa}(p,w'). 
\end{equation}
We consider two cases. 

\textsc{Case 1}: 
{\em $\lbrack x' ,z' \rbrack\cap\lbrack y' ,\overline{w}\rbrack\neq\emptyset$.} 
In this case, it follows from Lemma \ref{larger-smaller-lemma} that 
\begin{equation}\label{quadruple-p-tildeytildew-ydashoverw1-ineq}
d_{\kappa}(\tilde{y},\tilde{w})\leq d_{\kappa}(y' ,\overline{w})
\end{equation}
because 
\begin{align*}
&d_{\kappa}(x_0 ,\tilde{y})=d_{\kappa}(x' ,y' ),\quad d_{\kappa} (\tilde{y} ,z_0 )=d_{\kappa}(y' ,z' ),\quad 
d_{\kappa}(z_0 ,\tilde{w})=d_{\kappa}(z' ,\overline{w}),\\
&d_{\kappa}(\tilde{w},x_0 )=d_{\kappa}(\overline{w},x' ),
\quad d_{\kappa}(x' ,z' )\leq d_{X}(x,z)\leq d_{\kappa}(x_0 ,z_0 ).
\end{align*}
We also have $\lbrack x_0 ,z_0 \rbrack\cap\lbrack\tilde{y},\tilde{w}\rbrack\neq\emptyset$ by Corollary \ref{larger-smaller-coro}. 
Choose $p_0 \in\lbrack x_0 ,z_0 \rbrack\cap\lbrack\tilde{y},\tilde{w}\rbrack$. 
Then 
\begin{equation*}
d_{\kappa}(y_0, p_0 )\leq d_{\kappa}(\tilde{y},p_0 ),\quad
d_{\kappa}(p_0 ,w_0)\leq d_{\kappa}(p_0 ,\tilde{w})
\end{equation*}
by Corollary \ref{naibunten-coro}, and therefore 
\begin{equation}\label{quadruple-p-lemma-y0p0-p0w0-tildeytildew}
d_{\kappa}(y_0, w_0 )
\leq
d_{\kappa}(y_0, p_0 )+d_{\kappa}(p_0 ,w_0)
\leq
d_{\kappa}(\tilde{y},p_0 )+d_{\kappa}(p_0 ,\tilde{w})
=
d_{\kappa}(\tilde{y},\tilde{w}).
\end{equation}
It follows from \eqref{quadruple-p-lemma-ydashoverw}, \eqref{quadruple-p-tildeytildew-ydashoverw1-ineq} 
and \eqref{quadruple-p-lemma-y0p0-p0w0-tildeytildew} that 
\begin{equation*}
d_{X}(y,w)\leq d_{\kappa}(y_0 ,w_0 )
\leq
d_{\kappa}(\tilde{y},\tilde{w})
\leq
d_{\kappa}(y' ,\overline{w})
\leq
d_{\kappa}(y',p)+d_{\kappa}(p,w' ).
\end{equation*}

\textsc{Case 2}: 
{\em $\lbrack x' ,z' \rbrack\cap\lbrack y' ,\overline{w}\rbrack\neq\emptyset$.} 
In this case, we have 
\begin{equation*}
x'\neq y' ,\quad
x'\neq w' ,\quad
y' \neq z' ,\quad
z' \neq\overline{w}
\end{equation*}
clearly, and one of the inequalities 
$\pi <\angle y' x' z' +\angle z' x' \overline{w}$ or $\pi <\angle y' z' x' +\angle x' z' \overline{w}$ holds by Proposition \ref{convex-quadrilateral-prop}. 
We may assume without loss of generality that 
\begin{equation*}
\pi <\angle y' x' z' +\angle z' x' \overline{w}.
\end{equation*} 
Then we have 
\begin{equation*}
d_{\kappa}(y' ,x')+d_{\kappa}(x',\overline{w})
\leq
d_{\kappa}(y' ,p)+d_{\kappa}(p,\overline{w})
\end{equation*}
by Proposition \ref{triangle-convexhull-coro-prop}, 
and therefore 
\begin{align*}
d_X (y,w)
&\leq
d_X (y,x)+d_X (x,w)
\leq
d_{\kappa}(y',x')+d_{\kappa}(x',w')\\
&=
d_{\kappa}(y',x')+d_{\kappa}(x',\overline{w})
\leq
d_{\kappa}(y',p)+d_{\kappa}(p,\overline{w})
=
d_{\kappa}(y',p)+d_{\kappa}(p,w'),
\end{align*}
which completes the proof. 
\end{proof}

The following two corollaries follow immediately from Lemma \ref{quadruple-p-lemma}. 

\begin{corollary}\label{original-def-coro}
Let $\kappa\in\mathbb{R}$. 
Let $(X,d_{X})$ be a $\mathrm{Cycl}_4 (\kappa )$ space. 
Suppose $x,y,z,w\in X$ and $x' ,y',z' ,w'\in M_{\kappa}^2$ are points such that 
\begin{align*}
&d_{\kappa}(x',y')+d_{\kappa}(y',z')+d_{\kappa}(z',w')+d_{\kappa}(w',x')<2D_{\kappa},\quad \lbrack x' ,z' \rbrack\cap\lbrack y' ,w' \rbrack\neq\emptyset ,\\
&d_X (x,y)\leq d_{\kappa}(x' ,y' ),\quad d_X (y,z)\leq d_{\kappa}(y' ,z' ),\quad d_X (z,w)\leq d_{\kappa}(z' ,w' ),\\
&d_X (w,x)\leq d_{\kappa}(w' ,x' ),\quad d_{\kappa}(x',z')\leq d_{X}(x,z).
\end{align*}
Then $d_X (y,w)\leq d_{\kappa}(y' ,w' )$. 
\end{corollary}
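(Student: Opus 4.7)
The plan is to observe that Corollary \ref{original-def-coro} is an immediate specialization of Lemma \ref{quadruple-p-lemma} to the case in which the two diagonals $[x',z']$ and $[y',w']$ actually meet. The hypotheses of the corollary are identical to those of Lemma \ref{quadruple-p-lemma} except for the additional assumption that $[x',z']\cap [y',w']\neq\emptyset$, so only this extra assumption needs to be exploited.

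First I would use the intersection hypothesis to fix a point $p\in [x',z']\cap [y',w']$. Note that the perimeter bound $d_{\kappa}(x',y')+d_{\kappa}(y',z')+d_{\kappa}(z',w')+d_{\kappa}(w',x')<2D_{\kappa}$ forces $d_{\kappa}(y',w')<D_{\kappa}$, so $[y',w']$ is a bona fide geodesic segment in $M_{\kappa}^2$ and the point $p$ on it splits its length additively:
\begin{equation*}
d_{\kappa}(y',p)+d_{\kappa}(p,w')=d_{\kappa}(y',w').
\end{equation*}
Next, because $p\in [x',z']$ and the remaining hypotheses of Lemma \ref{quadruple-p-lemma} hold by assumption, applying that lemma at this specific $p$ yields
\begin{equation*}
d_X(y,w)\leq d_{\kappa}(y',p)+d_{\kappa}(p,w').
\end{equation*}
Combining these two displays gives $d_X(y,w)\leq d_{\kappa}(y',w')$, which is exactly the desired inequality.

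There is no real obstacle here: the only thing to verify is the additive splitting of $d_{\kappa}(y',w')$ along $p$, which is automatic from $p\in [y',w']$ once we know $d_{\kappa}(y',w')<D_{\kappa}$, and this last bound is supplied by the perimeter hypothesis. Thus the proof amounts to a single application of Lemma \ref{quadruple-p-lemma} together with the elementary geometry of geodesic segments in $M_{\kappa}^2$.
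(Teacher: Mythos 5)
Your proposal is correct and is exactly the argument the paper intends: the paper derives this corollary "immediately" from Lemma \ref{quadruple-p-lemma} by choosing $p\in\lbrack x',z'\rbrack\cap\lbrack y',w'\rbrack$, so that $d_{\kappa}(y',p)+d_{\kappa}(p,w')=d_{\kappa}(y',w')$. Your additional check that the perimeter bound forces $d_{\kappa}(y',w')<D_{\kappa}$ (so the splitting along $p$ is legitimate) is a correct and welcome detail.
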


\begin{corollary}\label{simplicial-coro}
Let $\kappa\in\mathbb{R}$. 
Let $(X,d_{X})$ be a $\mathrm{Cycl}_4 (\kappa )$ space, 
and let $(Y,d_Y )$ be a metric space. 
Suppose $x,y,z,w\in X$ and $x',y',z',w'\in Y$ are points such that 
\begin{align*}
&d_{Y}(x',y')+d_{Y}(y',z')+d_{Y}(z',w')+d_{Y}(w',x')<2D_{\kappa},\\
&d_X (x,y)\leq d_Y(x',y'),\quad d_X (y,z)\leq d_Y(y',z'),\quad d_X (z,w)\leq d_Y(z',w'),\\
&d_X (w,x)\leq d_Y(w',x'),\quad d_Y(x',z')\leq d_X (x,z). 
\end{align*}
Assume that there exist subsets $S$ and $T$ of $Y$ satisfying the following conditions: 
\begin{enumerate}
\item[$(1)$]
$S$ and $T$ are isometric to convex subsets of $M_{\kappa}^2$. 
\item[$(2)$]
$\{ x',y',z'\}\subseteq S$ and $\{ x',z',w'\}\subseteq T$.
\item[$(3)$]
There is a geodesic segment $\Gamma_1$ in $Y$ with endpoints $x'$ and $z'$ such that 
$\Gamma_1 \subseteq S\cap T$.
\item[$(4)$]
There is a point $p\in\Gamma_1$ such that $d_Y (y' ,w' )=d_Y (y' ,p)+d_Y (p,w' )$. 
\end{enumerate}
Then $d_X (y,w)\leq d_Y (y',w')$. 
\end{corollary}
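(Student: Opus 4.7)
The strategy is to construct an isometric copy of the relevant points of $Y$ inside $M_{\kappa}^2$ and then apply Lemma \ref{quadruple-p-lemma} directly. First, I would fix an isometric embedding $\phi_S : S \to M_{\kappa}^2$ onto a convex subset and set $\tilde{x} = \phi_S(x')$, $\tilde{y} = \phi_S(y')$, $\tilde{z} = \phi_S(z')$, $\tilde{p} = \phi_S(p)$. Since the sum $d_Y(x',y') + d_Y(y',z') + d_Y(z',w') + d_Y(w',x')$ is less than $2D_{\kappa}$, at least one of $d_Y(x',y') + d_Y(y',z')$ or $d_Y(z',w') + d_Y(w',x')$ is less than $D_{\kappa}$, which together with the triangle inequality in $Y$ forces $d_Y(x', z') < D_{\kappa}$. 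Hence $[\tilde{x}, \tilde{z}]$ is the unique geodesic segment in $M_{\kappa}^2$ with these endpoints, and by condition (3), $\phi_S(\Gamma_1) = [\tilde{x}, \tilde{z}]$.

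Next, I would take an isometric embedding $T \to M_{\kappa}^2$ and compose it with a suitable isometry of $M_{\kappa}^2$ so that $x'$ maps to $\tilde{x}$ and $z'$ to $\tilde{z}$; this is possible because any pair of points at a common distance less than $D_{\kappa}$ can be moved to any other such pair. Call this modified embedding $\phi_T$, and set $\tilde{w} = \phi_T(w')$. By uniqueness of $[\tilde{x}, \tilde{z}]$ we have $\phi_T(\Gamma_1) = [\tilde{x}, \tilde{z}]$, and since $\phi_T|_{\Gamma_1}$ and $\phi_S|_{\Gamma_1}$ are two isometries from $\Gamma_1$ onto $[\tilde{x}, \tilde{z}]$ agreeing on the endpoints, they coincide; in particular $\phi_T(p) = \tilde{p}$.

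Now all distances among $\tilde{x}, \tilde{y}, \tilde{z}, \tilde{w}, \tilde{p}$ in $M_{\kappa}^2$ match the corresponding distances in $Y$: the four side lengths and $d_{\kappa}(\tilde{x}, \tilde{z}) = d_Y(x', z')$ are read off from $\phi_S$ and $\phi_T$, while $d_{\kappa}(\tilde{y}, \tilde{p}) = d_Y(y', p)$ comes from $\phi_S$ and $d_{\kappa}(\tilde{p}, \tilde{w}) = d_Y(p, w')$ from $\phi_T$. Condition (4) therefore gives $d_{\kappa}(\tilde{y}, \tilde{p}) + d_{\kappa}(\tilde{p}, \tilde{w}) = d_Y(y', w')$. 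The hypotheses of Lemma \ref{quadruple-p-lemma} hold with $x, y, z, w \in X$ and with $\tilde{x}, \tilde{y}, \tilde{z}, \tilde{w} \in M_{\kappa}^2$ playing the roles of $x', y', z', w'$: the perimeter bound is inherited since the four side lengths are preserved, the four inequalities $d_X(x, y) \le d_{\kappa}(\tilde{x}, \tilde{y})$ etc.\ follow from the stated hypotheses, and $d_{\kappa}(\tilde{x}, \tilde{z}) = d_Y(x', z') \le d_X(x, z)$. Applying the lemma at $\tilde{p} \in [\tilde{x}, \tilde{z}]$ yields
\[
d_X(y, w) \le d_{\kappa}(\tilde{y}, \tilde{p}) + d_{\kappa}(\tilde{p}, \tilde{w}) = d_Y(y', p) + d_Y(p, w') = d_Y(y', w'),
\]
which is the desired conclusion.

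The main technical point I expect to deal with is ensuring $\phi_S$ and $\phi_T$ can be chosen to agree on $\Gamma_1$ so that the combined picture is well-defined inside a single copy of $M_{\kappa}^2$; this relies on the transitivity of the isometry group of $M_{\kappa}^2$ on ordered pairs of points at any fixed distance less than $D_{\kappa}$, together with the uniqueness of the geodesic segment joining two such points. Once this is in place, everything reduces to a direct verification that Lemma \ref{quadruple-p-lemma} applies.
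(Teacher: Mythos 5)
Your proposal is correct and takes essentially the same route as the paper: the paper states that Corollary \ref{simplicial-coro} follows immediately from Lemma \ref{quadruple-p-lemma}, and your construction --- transplanting $x',y',z',w',p$ into $M_{\kappa}^2$ via the isometric embeddings of $S$ and $T$, normalized to agree on $\Gamma_1$, and then invoking the lemma at $\tilde{p}$ --- is precisely the verification that reduction requires. All the technical points you flag (that $d_Y(x',z')<D_{\kappa}$, uniqueness of $[\tilde{x},\tilde{z}]$, and the agreement of the two embeddings on $\Gamma_1$) are handled correctly.
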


\begin{remark}
Clearly, we may replace the condition $(4)$ in Corollary \ref{simplicial-coro} with the following condition:
\begin{enumerate}
\item[$(4')$]
There exists a geodesic segment $\Gamma_2$ in $Y$ with endpoints $y'$ and $w'$ such that 
$\Gamma_1\cap\Gamma_2\neq\emptyset$. 
\end{enumerate}
\end{remark}

\section{A property of convex polygons}\label{convex-polygon-sec}

Before proving Theorem \ref{nongeodesic-majorization-th} and Theorem \ref{Cycl-th}, 
we discuss a certain property of convex polygons. 
Fix $\kappa\in\mathbb{R}$ and an integer $n\geq 3$. 
Suppose $g:\mathbb{Z}/n\mathbb{Z}\to M_{\kappa}^2$ is a map such that 
\begin{equation*}
\sum_{i\in\mathbb{Z}/n\mathbb{Z}}d_{\kappa}(g(i),g(i+\lbrack 1\rbrack_n ))<2D_{\kappa},\quad
g(j)\neq g(j+\lbrack 1\rbrack_n )
\end{equation*}
for every $j\in\mathbb{Z}/n\mathbb{Z}$. 
It is known that if $\bigcup_{i\in\mathbb{Z}/n\mathbb{Z}}\lbrack g(i),g(i+\lbrack 1\rbrack_n )\rbrack$ 
forms a convex polygon in $M_{\kappa}^2$, then we have 
\begin{equation*}
\lbrack g(i),g(j)\rbrack\cap\lbrack g(i-\lbrack 1\rbrack_n ),g(i+\lbrack 1\rbrack_n )\rbrack\neq\emptyset
\end{equation*}
for any $i,j\in\mathbb{Z}/n\mathbb{Z}$ with $i\neq j$. 
We recall some facts concerning this property of convex polygons. 
For completeness, we prove all those facts. 

\begin{lemma}\label{edge-lemma}
Fix $\kappa\in\mathbb{R}$ and an integer $n\geq 3$. 
Suppose $g:\mathbb{Z}/n\mathbb{Z}\to M_{\kappa}^2$ is a map such that 
$\sum_{i\in\mathbb{Z}/n\mathbb{Z}}d_{\kappa}(g(i),g(i+\lbrack 1\rbrack_n ))<2D_{\kappa}$, and 
$g(j)\neq g(j+\lbrack 1\rbrack_n )$ for every $j\in\mathbb{Z}/n\mathbb{Z}$. 
Assume that $\lbrack g(i),g(j)\rbrack\cap\lbrack g(i-\lbrack 1\rbrack_n ),g(i+\lbrack 1\rbrack_n )\rbrack\neq\emptyset$ 
for any $i,j\in\mathbb{Z}/n\mathbb{Z}$ with $i\neq j$. 
Then $g(a)$ and $g(b)$ do not lie on opposite sides of $\ell (g(c),g(c+\lbrack 1\rbrack_n ))$ for any $a,b,c\in\mathbb{Z}/n\mathbb{Z}$. 
\end{lemma}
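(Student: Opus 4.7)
The plan is to argue by contradiction. Fix $c\in\mathbb{Z}/n\mathbb{Z}$, set $\ell:=\ell(g(c),g(c+\lbrack 1\rbrack_n))$, and suppose for contradiction that some $a,b\in\mathbb{Z}/n\mathbb{Z}$ are such that $g(a)$ and $g(b)$ lie on opposite open sides of $\ell$. In particular $g(a),g(b)\notin\ell$. Let
\[
S:=\{\,i\in\mathbb{Z}/n\mathbb{Z}:g(i)\in\ell\,\};
\]
then $c,c+\lbrack 1\rbrack_n\in S$ but $a,b\notin S$. I will show that $S=\mathbb{Z}/n\mathbb{Z}$, which is the contradiction we seek. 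At the outset I observe that since $\sum_{i}d_\kappa(g(i),g(i+\lbrack 1\rbrack_n))<2D_\kappa$, Proposition \ref{hemisphere-prop} gives $d_\kappa(g(i),g(j))<D_\kappa$ for every $i,j$; hence the line through any two distinct values of $g$ is uniquely defined, and Proposition \ref{diagonal-angle-prop} applies to any apex-plus-triple configuration built from vertices whose apex is distinct from the other three.

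The core step is the following \emph{propagation claim}: if $k,k+\lbrack 1\rbrack_n\in S$, then $k-\lbrack 1\rbrack_n,k+\lbrack 2\rbrack_n\in S$. For this, note that $g(k)$ and $g(k+\lbrack 1\rbrack_n)$ are two distinct points of $\ell$ at distance less than $D_\kappa$, so they determine $\ell$; thus $\ell(g(k),g(k+\lbrack 1\rbrack_n))=\ell$. The hypothesis applied at $i=k$ with $j=a$ yields
\[
[g(k-\lbrack 1\rbrack_n),g(k+\lbrack 1\rbrack_n)]\cap[g(k),g(a)]\ne\emptyset.
\]
Because $g(a)\notin\ell$ and $g(k)\in\ell$, the apex $g(k)$ differs from $g(a)$, so Proposition \ref{diagonal-angle-prop} (with $o=g(k)$, $x=g(k-\lbrack 1\rbrack_n)$, $y=g(a)$, $z=g(k+\lbrack 1\rbrack_n)$) gives the angle equality
\[
\angle g(k-\lbrack 1\rbrack_n)\,g(k)\,g(k+\lbrack 1\rbrack_n)=\angle g(k-\lbrack 1\rbrack_n)\,g(k)\,g(a)+\angle g(a)\,g(k)\,g(k+\lbrack 1\rbrack_n).
\]
The second bullet of Proposition \ref{angle-equality-necessary-conditions-prop} then asserts that $g(k-\lbrack 1\rbrack_n)$ and $g(a)$ do not lie on opposite sides of $\ell(g(k),g(k+\lbrack 1\rbrack_n))=\ell$. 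Running the same argument with $j=b$ shows that $g(k-\lbrack 1\rbrack_n)$ and $g(b)$ also do not lie on opposite sides of $\ell$. Were $g(k-\lbrack 1\rbrack_n)$ to lie strictly off $\ell$, it would sit on one of the open sides, and one of $g(a),g(b)$ would then be forced onto the opposite open side from it, contradicting what we just proved. Hence $g(k-\lbrack 1\rbrack_n)\in\ell$, i.e.\ $k-\lbrack 1\rbrack_n\in S$. The mirror-image argument at the vertex $k+\lbrack 1\rbrack_n$, applied via the first bullet of Proposition \ref{angle-equality-necessary-conditions-prop} with $o=g(k+\lbrack 1\rbrack_n)$, $x=g(k)$, $z=g(k+\lbrack 2\rbrack_n)$, shows $k+\lbrack 2\rbrack_n\in S$.

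Starting from $\{c,c+\lbrack 1\rbrack_n\}\subseteq S$ and iterating the propagation claim forward and backward around the cycle forces $S=\mathbb{Z}/n\mathbb{Z}$, contradicting $a\notin S$. The only delicate point, and really the engine of the whole argument, is the identity $\ell(g(k),g(k+\lbrack 1\rbrack_n))=\ell$ at each step of the induction, which is needed to re-use the \emph{same} line $\ell$ when invoking Proposition \ref{angle-equality-necessary-conditions-prop}; this identity is made possible by the uniform diameter bound $d_\kappa(g(k),g(k+\lbrack 1\rbrack_n))<D_\kappa$ supplied by Proposition \ref{hemisphere-prop}.
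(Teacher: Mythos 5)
Your proof is correct. Every application of Proposition \ref{diagonal-angle-prop} and Proposition \ref{angle-equality-necessary-conditions-prop} has its hypotheses verified (the non-degeneracy $g(k)\neq g(k\pm\lbrack 1\rbrack_n)$ comes from the assumption $g(j)\neq g(j+\lbrack 1\rbrack_n)$, the bound $d_\kappa(g(i),g(j))<D_\kappa$ from Proposition \ref{hemisphere-prop}, and $g(k)\neq g(a)$ because one point is on $\ell$ and the other is not), and the identification $\ell(g(k),g(k+\lbrack 1\rbrack_n))=\ell$ at each stage is justified exactly as you say. The route is, however, different from the paper's. The paper also argues by contradiction, but instead of propagating around the whole cycle it locates a single ``first departure'' vertex $c_1$ with $g(c_1),g(c_1+\lbrack 1\rbrack_n)$ on the line $L$ and $g(c_1-\lbrack 1\rbrack_n)\notin L$, and then contradicts the hypothesis directly: the segment $\lbrack g(c_1-\lbrack 1\rbrack_n),g(c_1+\lbrack 1\rbrack_n)\rbrack$ lies in the closed half-plane $A$ on one side of $L$ while $\lbrack g(c_1),g(b_0)\rbrack$ lies in the opposite closed half-plane $B$, so their intersection would have to lie in $L=A\cap B$; but each segment meets $L$ in exactly one point and those points are $g(c_1+\lbrack 1\rbrack_n)\neq g(c_1)$, so the segments are disjoint. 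The two arguments are structurally parallel --- your propagation step is essentially the contrapositive of the paper's contradiction at $c_1$ --- but the tools differ: the paper uses only the $D_\kappa$-convexity of closed half-planes, which is more elementary and self-contained, whereas you route the same geometric fact through the angle-additivity machinery of Section \ref{angle-sec}. Since those propositions are already established at this point in the paper, nothing is lost; your version just carries slightly more overhead (checking the non-degeneracy hypotheses of the angle propositions at every vertex) in exchange for a cleanly stated induction.
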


\begin{figure}[htbp]
\centering\begin{tikzpicture}[scale=0.5]
\draw (0,5) -- (1.5,3);
\draw (0,3) -- (4,0);
\draw[dashed] (-2,3) -- (9,3);
\fill (0,3) circle [radius=0.15];
\fill (0,5) circle [radius=0.15];
\fill (1.5,3) circle [radius=0.15];
\fill (4,0) circle [radius=0.15];
\fill (5,6) circle [radius=0.15];
\node [below left] at (0,3) {$g(c_1 )$};
\node [left] at (0,5) {$g(c_1 -\lbrack 1\rbrack_n )$};
\node [above right] at (1.5,3) {$g(c_1 +\lbrack 1\rbrack_n )$};
\node [right] at (4,0) {$g(b_0 )$};
\node [right] at (5,6) {$g(a_0 )$};
\node [right] at (9,3) {$L$};
\end{tikzpicture}
\caption{Proof of Lemma \ref{edge-lemma}.}\label{edge-lemma-fig}
\end{figure}

\begin{proof}
It follows from the hypothesis and Proposition \ref{hemisphere-prop} that 
we have $d_{\kappa}(g(i),g(j))<D_{\kappa}$ for any $i,j\in\mathbb{Z}/n\mathbb{Z}$. 
Assume for the sake of contradiction that 
there exist $a_0 ,b_0 ,c_0\in\mathbb{Z}/n\mathbb{Z}$ such that 
$g(a_0 )$ and $g(b_0 )$ are on opposite sides of the line $L=\ell (g(c_0 ),g(c_0 +\lbrack 1\rbrack_n ))$. 
Let $m$ be the smallest nonnegative integer such that 
$g(c_0 -\lbrack m+1\rbrack_n )\not\in L$, 
and let $c_1 =c_0 -\lbrack m\rbrack_n$. 
Then $g(c_1 ),g(c_1 +\lbrack 1\rbrack_n )\in L$, and 
$g(c_1 -\lbrack 1\rbrack_n )\not\in L$. 
We may assume without loss of generality that 
$g(c_1 -\lbrack 1\rbrack_n )$ is on the same side of $L$ as $g(a_0 )$. 
Let $A$ be the union of $L$ and the side of it containing $g(a_0 )$, 
and let $B$ be the union of $L$ and the side of it containing $g(b_0 )$. 
Then we have 
\begin{equation*}
\lbrack g(c_1 -\lbrack1\rbrack_n ),g(c_1 +\lbrack1\rbrack_n )\rbrack\subseteq A,\quad
\lbrack g(c_1 ),g(b_0 )\rbrack\subseteq B.
\end{equation*}
because both $A$ and $B$ are $D_{\kappa}$-convex subsets of $M_{\kappa}^2$. 
Since $L=A\cap B$, it follows that 
\begin{equation*}
\lbrack g(c_1 -\lbrack1\rbrack_n ),g( c_1 +\lbrack1\rbrack_n )\rbrack\cap\lbrack g(c_1 ),g(b_0 )\rbrack\subseteq L. 
\end{equation*}
On the other hand, we have 
\begin{align*}
&\lbrack g(c_1 -\lbrack1\rbrack_n ),g( c_1 +\lbrack1\rbrack_n )\rbrack\cap L=\{ g( c_1 +\lbrack1\rbrack_n )\} ,\\
&\lbrack g(c_1 ),g(b_0 )\rbrack\cap L=\{ g(c_1 )\} ,\quad
g(c_1 )\neq g(c_1 +\lbrack 1\rbrack_n ),
\end{align*}
and therefore 
$\lbrack g(c_1 -\lbrack1\rbrack_n ),g( c_1 +\lbrack1\rbrack_n )\rbrack\cap\lbrack g(c_1 ),g(b_0 )\rbrack =\emptyset$, 
contradicting the hypothesis. 
\end{proof}

\begin{lemma}\label{two-angles-lemma}
Fix $\kappa\in\mathbb{R}$ and an integer $n\geq 3$. 
Suppose $g:\mathbb{Z}/n\mathbb{Z}\to M_{\kappa}^2$ is a map such that 
$\sum_{i\in\mathbb{Z}/n\mathbb{Z}}d_{\kappa}(g(i),g(i+\lbrack 1\rbrack_n ))<2D_{\kappa}$, 
and $g(j)\neq g(j+\lbrack 1\rbrack_n )$ for every $j\in\mathbb{Z}/n\mathbb{Z}$. 
Assume that 
$\lbrack g(i),g(j)\rbrack\cap\lbrack g(i-\lbrack 1\rbrack_n ),g(i+\lbrack 1\rbrack_n )\rbrack\neq\emptyset$ 
for any $i,j\in\mathbb{Z}/n\mathbb{Z}$ with $i\neq j$. 
Then 
\begin{align*}
\angle g(k-\lbrack 1\rbrack_n )g(k)g(l)&\leq\angle g(k-\lbrack 1\rbrack_n )g(k)g(k+\lbrack 1\rbrack_{n}),\\
\angle g(l)g(k)g(k+\lbrack 1\rbrack_n )&\leq\angle g(k-\lbrack 1\rbrack_n )g(k)g(k+\lbrack 1\rbrack_{n})
\end{align*}
for any $k,l\in\mathbb{Z}/n\mathbb{Z}$ with $g(k)\neq g(l)$.
\end{lemma}

\begin{figure}[htbp]
\centering\begin{tikzpicture}[scale=0.5]
\draw (2.5,5) -- (5,0);
\draw (0,0) -- (6,2);
\fill (0,0) circle [radius=0.15];
\fill (5,0) circle [radius=0.15];
\fill (6,2) circle [radius=0.15];
\fill (2.5,5) circle [radius=0.15];
\node [below left] at (0,0) {$g(k-\lbrack 1\rbrack_{n})$};
\node [below right] at (5,0) {$g(k)$};
\node [right] at (6.1,2) {$g(k+\lbrack 1\rbrack_{n})$};
\node [above] at (2.5,5.1) {$g(l)$};
\end{tikzpicture}
\caption{Proof of Lemma \ref{two-angles-lemma}.}\label{two-angles-fig}
\end{figure}

\begin{proof}
It follows from the hypothesis and Proposition \ref{hemisphere-prop} that 
we have $d_{\kappa}(g(i),g(j))<D_{\kappa}$ for any $i,j\in\mathbb{Z}/n\mathbb{Z}$. 
Fix $k\in\mathbb{Z}/n\mathbb{Z}$ with $g(k)\neq g(\lbrack 0\rbrack_{n})$. 
Because 
\begin{equation*}
\lbrack g(k),g(l)\rbrack\cap\lbrack g(g(k-\lbrack 1\rbrack_{n}),g(k+\lbrack 1\rbrack_{n})\rbrack\neq\emptyset
\end{equation*}
by hypothesis, it follows from 
Proposition \ref{diagonal-angle-prop} that 
\begin{equation*}
\angle g(k-\lbrack 1\rbrack_{n})g(k)g(l)
+
\angle g(l)g(k)g(k+\lbrack 1\rbrack_{n})
=
\angle g(k-\lbrack 1\rbrack_{n})g(k)g(k+\lbrack 1\rbrack_{n}),
\end{equation*}
which implies the desired inequalities. 
\end{proof}

\begin{lemma}\label{plus-one-lemma}
Fix $\kappa\in\mathbb{R}$ and an integer $n\geq 3$. 
Suppose $g :\mathbb{Z}/(n+1)\mathbb{Z}\to M_{\kappa}^2$ is a map satisfying the following conditions: 
\begin{enumerate}
\item[$(1)$]
$\sum_{i\in\mathbb{Z}/(n+1)\mathbb{Z}}d_{\kappa}(g (i),g (i+\lbrack 1\rbrack_{n+1}))<2D_{\kappa}$;
\item[$(2)$]
$g(j)\neq g(j+\lbrack 1\rbrack_{n+1})$ for every $j\in\mathbb{Z}/(n+1)\mathbb{Z}$;
\item[$(3)$]
The map $g_0 :\mathbb{Z}/n\mathbb{Z}\to M_{\kappa}^2$ defined by $g_0 (\lbrack m\rbrack_n )=g(\lbrack m\rbrack_{n+1})$, 
$m\in\mathbb{Z}\cap\lbrack 0,n-1\rbrack$ satisfies 
$\lbrack g_0 (i),g_0 (j)\rbrack\cap\lbrack g_0 (i-\lbrack 1\rbrack_n ),g_0 (i+\lbrack 1\rbrack_n )\rbrack\neq\emptyset$ 
for any $i,j\in\mathbb{Z}/n\mathbb{Z}$ with $i\neq j$;
\item[$(4)$]
$g(\lbrack n-1\rbrack_{n+1})\neq g(\lbrack 0\rbrack_{n+1})$;
\item[$(5)$]
$g(\lbrack n\rbrack_{n+1})$ is not on the same side of $\ell (g(\lbrack n-1\rbrack_{n+1}),g(\lbrack 0\rbrack_{n+1}))$ 
as $g(k)$ for every $k\in (\mathbb{Z}/(n+1)\mathbb{Z})\setminus\{\lbrack n\rbrack_{n+1}\}$;
\item[$(6)$]
$\lbrack g(\lbrack n\rbrack_{n+1}),g(\lbrack 1\rbrack_{n+1})\rbrack\cap\lbrack g(\lbrack n-1\rbrack_{n+1}),g(\lbrack 0\rbrack_{n+1})\rbrack \neq\emptyset$;
\item[$(7)$]
$\lbrack g(\lbrack n\rbrack_{n+1}),g(\lbrack n-2\rbrack_{n+1})\rbrack\cap\lbrack g(\lbrack n-1\rbrack_{n+1}),g(\lbrack 0\rbrack_{n+1})\rbrack \neq\emptyset$.
\end{enumerate}
Then we have 
\begin{equation*}
\lbrack g(i),g(j)\rbrack\cap\lbrack g(i-\lbrack 1\rbrack_{n+1}),g(i+\lbrack 1\rbrack_{n+1})\rbrack\neq\emptyset
\end{equation*}
for any $i,j\in\mathbb{Z}/(n+1)\mathbb{Z}$ with $i\neq j$. 
\end{lemma}

\begin{proof}
Because $g$ satisfies the condition $(1)$, it follows from Proposition \ref{hemisphere-prop} 
that we have $d_{\kappa}(x,y)<D_{\kappa}$ 
for any $x,y\in\mathrm{conv}(g(\mathbb{Z}/(n+1)\mathbb{Z}))$. 
For each integer $m$, we denote the point $g(\lbrack m\rbrack_{n+1})\in M_{\kappa}^2$ by $x_m$. 
Since $g$ satisfies the condition $(3)$ and we clearly have 
$\lbrack x_s ,x_{t}\rbrack\cap\lbrack x_{s-1},x_{s+1}\rbrack\neq\emptyset$ 
for any $s,t\in\mathbb{Z}$ with $|s-t|=1$, 
it suffices to prove that 
\begin{align}
\label{plus-one-n-m}
\lbrack x_n ,x_m \rbrack\cap
\lbrack x_{n-1},x_0 \rbrack &\neq\emptyset ,\\
\label{plus-one-m-n}
\lbrack x_m ,x_n \rbrack\cap
\lbrack x_{m-1},x_{m+1}\rbrack &\neq\emptyset ,\\
\label{plus-one-0-k}
\lbrack x_0 ,x_k \rbrack\cap
\lbrack x_{n},x_{1}\rbrack &\neq\emptyset ,\\
\label{plus-one-n-1-l}
\lbrack x_{n-1} ,x_{l} \rbrack\cap
\lbrack x_{n-2},x_{n}\rbrack &\neq\emptyset
\end{align}
for any $m\in\mathbb{Z}\cap\lbrack 1,n-2\rbrack$, $k\in\mathbb{Z}\cap\lbrack 2,n-1\rbrack$ and 
$l\in\mathbb{Z}\cap\lbrack 0,n-3\rbrack$.

\begin{figure}[htbp]
\centering\begin{tikzpicture}[scale=0.5]
\draw (0,0) -- (6,2);
\draw (6,2) -- (2,-1);
\draw (2,-1) -- (2.5,5);
\draw (2.5,5) -- (5,0);
\draw (5,0) -- (0,0);
\fill (0,0) circle [radius=0.15];
\fill (2,-1) circle [radius=0.15];
\fill (5,0) circle [radius=0.15];
\fill (6,2) circle [radius=0.15];
\fill (2.5,5) circle [radius=0.15];
\fill (-0.5,1.5) circle [radius=0.15];
\node [below left] at (0,0) {$x_{n-1}$};
\node [below] at (2,-1.2) {$x_n$};
\node [below right] at (5,0) {$x_0$};
\node [right] at (6.1,2) {$x_1$};
\node [above] at (2.5,5.1) {$x_m$};
\node [left] at (-0.6,1.5) {$x_{n-2}$};
\end{tikzpicture}
\caption{Proof of \eqref{plus-one-n-m}.}\label{plus-one-n-m-fig}
\end{figure}

First we prove \eqref{plus-one-n-m}. 
Fix an integer $m\in\mathbb{Z}\cap\lbrack 1,n-2\rbrack$. 
If $x_m =x_0$ or $x_m =x_{n-1}$, 
then \eqref{plus-one-n-m} holds clearly. 
So we assume that $x_m \neq x_0$ and $x_m \neq x_{n-1}$. 
Since $g$ satisfies the condition $(6)$, 
Corollary \ref{diagonal-angle-coro} implies 
\begin{equation}\label{plus-one-10n-angle}
\angle x_n x_0 x_{n-1}
+
\angle x_{n-1}x_0 x_1
\leq\pi .
\end{equation}
Since $g$ satisfies the conditions $(1)$, $(2)$ and $(4)$, 
$g_0$ satisfies 
\begin{equation*}
\sum_{i\in\mathbb{Z}/n\mathbb{Z}}d_{\kappa}(g_0 (i),g_0 (i+\lbrack 1\rbrack_n ))<2D_{\kappa}
\end{equation*}
and $g_0 (j)\neq g_0 (j+\lbrack 1\rbrack_n )$ for every $j\in\mathbb{Z}/n\mathbb{Z}$. 
Therefore Lemma \ref{two-angles-lemma} implies 
\begin{equation}\label{plus-one-n-10m-n-101-ineq}
\angle x_{n-1}x_0 x_m
\leq
\angle x_{n-1}x_0 x_1
\end{equation}
because $g_0$ satisfies the condition $(3)$. 
Combining \eqref{plus-one-10n-angle} and \eqref{plus-one-n-10m-n-101-ineq}, we obtain 
\begin{equation*}
\angle x_n x_0 x_{n-1}
+
\angle x_{n-1}x_0 x_m
\leq
\angle x_n x_0 x_{n-1}
+
\angle x_{n-1}x_0 x_1
\leq\pi .
\end{equation*}
The same argument shows that 
\begin{equation*}
\angle x_n x_{n-1}x_0
+
\angle x_0 x_{n-1}x_m
\leq\pi .
\end{equation*}
Furthermore, 
$x_n$ and $x_m$ are not on the same side of $\ell (x_ {n-1},x_{0})$ since $g$ satisfies the condition $(5)$. 
Therefore, 
Proposition \ref{convex-quadrilateral-prop} implies \eqref{plus-one-n-m}. 

\begin{figure}[htbp]
\centering\begin{tikzpicture}[scale=0.5]
\draw (0,3.5) -- (6,3.5);
\draw (2,-1) -- (2.5,5);
\draw (0,0) -- (5,0);
\draw (0,0) -- (2.5,5);
\draw (2.5,5) -- (5,0);
\draw (1.75,3.5) -- (2.08,0);
\draw (3.25,3.5) -- (2.08,0);
\fill (0,0) circle [radius=0.15];
\fill (2,-1) circle [radius=0.15];
\fill (5,0) circle [radius=0.15];
\fill (6,3.5) circle [radius=0.15];
\fill (2.5,5) circle [radius=0.15];
\fill (0,3.5) circle [radius=0.15];
\fill (2.08,0) circle [radius=0.15];
\fill (1.75,3.5) circle [radius=0.15];
\fill (3.25,3.5) circle [radius=0.15];
\node [below left] at (0,0) {$x_{n-1}$};
\node [above left] at (0,3.5) {$x_{m+1}$};
\node [below] at (2,-1.1) {$x_n$};
\node [below right] at (5,0) {$x_0$};
\node [above right] at (6,3.5) {$x_{m-1}$};
\node [above] at (2.5,5.1) {$x_{m}$};
\node [below right] at (2.08,0) {$p$};
\node [above left] at (1.75,3.5) {$a$};
\node [above right] at (3.25,3.5) {$b$};
\end{tikzpicture}
\caption{Proof of \eqref{plus-one-m-n}.}\label{plus-one-m-n-fig}
\end{figure}

Next we prove \eqref{plus-one-m-n}. 
Again, fix an integer $m\in\mathbb{Z}\cap\lbrack 1,n-2\rbrack$. 
By \eqref{plus-one-n-m}, there exists a point $p\in\lbrack x_{n},x_{m}\rbrack\cap\lbrack x_{n-1},x_{0}\rbrack$. 
Because $g$ satisfies the condition $(3)$, there also exist points 
$a\in\lbrack x_m ,x_{n-1}\rbrack\cap\lbrack x_{m-1},x_{m+1}\rbrack$ and 
$b\in\lbrack x_m ,x_{0}\rbrack\cap\lbrack x_{m-1},x_{m+1}\rbrack$. 
If one of the equalities $x_m =a$, $x_m =b$, $p=a$, $p=b$, $p=x_{n-1}$ or $p=x_{0}$ holds, 
then \eqref{plus-one-m-n} holds clearly. 
If $x_m =p$, then we have $a\in\lbrack x_{n-1},p\rbrack$ and $b\in\lbrack p,x_0\rbrack$, 
which implies $p\in\lbrack a,b\rbrack$ since $p\in\lbrack x_{n-1},x_{0}\rbrack$, 
and therefore \eqref{plus-one-m-n} holds. 
So henceforth we assume that $x_{m}\not\in\{ p,a,b\}$ and $p\not\in\{ a,b,x_{n-1},x_{0},x_m\}$. 
Then it follows from Proposition \ref{diagonal-angle-prop} that 
\begin{align}
&\angle ax_m p+\angle px_m b
=
\angle x_{n-1}x_m p+\angle px_m x_0
=
\angle x_{n-1}x_m x_0
\leq
\pi ,\label{plus-one-amp-pmb}\\
&\angle x_{n-1}p x_m +\angle x_m px_0
=
\angle x_{n-1}p x_0
=
\pi ,\label{plus-one-n-1pm-mp0}\\
&\angle x_{n-1}pa+\angle apx_m =\angle x_{n-1}px_m ,\quad 
\angle x_{m}pb+\angle bpx_0 =\angle x_m px_0 .\label{plus-one-n-1pa-apm-mpb-bp0}
\end{align}
Combining \eqref{plus-one-n-1pm-mp0} and \eqref{plus-one-n-1pa-apm-mpb-bp0}, we obtain 
\begin{equation}\label{plus-one-apm-mpb}
\angle apx_{m}+\angle x_m pb
\leq
\angle x_{n-1}px_m +\angle x_m px_0
=\pi .
\end{equation}
Because $a\in (x_m ,x_{n-1}\rbrack$ and $b\in (x_m ,x_{0}\rbrack$, 
$x_{n-1}$ is on the same side of $\ell (x_m ,p)$ as $a$ and 
$x_{0}$ is on the same side of $\ell (x_m ,p)$ as $b$ 
whenever $a\not\in\ell (x_m ,p)$ and $b\not\in\ell (x_m ,p)$. 
It follows that $a$ and $b$ are not on the same side of $\ell (x_m ,p)$ 
because otherwise $x_{n-1}$ and $x_0$ would lie on the same side of $\ell (x_m ,x_n )=\ell (x_m ,p)$, 
contradicting \eqref{plus-one-n-m}. 
Therefore, \eqref{plus-one-amp-pmb} and \eqref{plus-one-apm-mpb} imply 
$\lbrack x_{m},p\rbrack\cap\lbrack a,b\rbrack\neq\emptyset$ by Proposition \ref{convex-quadrilateral-prop}, 
which implies \eqref{plus-one-m-n}.

\begin{figure}[htbp]
\centering\begin{tikzpicture}[scale=0.5]
\draw (0,0) -- (6,2);
\draw (6,2) -- (2,-1);
\draw (2,-1) -- (1.5,5);
\draw (1.5,5) -- (5,0);
\draw (5,0) -- (0,0);
\fill (0,0) circle [radius=0.15];
\fill (2,-1) circle [radius=0.15];
\fill (5,0) circle [radius=0.15];
\fill (6,2) circle [radius=0.15];
\fill (1.5,5) circle [radius=0.15];
\fill (3.33,0) circle [radius=0.15];
\node [below left] at (0,0) {$x_{n-1}$};
\node [below] at (2,-1.2) {$x_n$};
\node [below right] at (5,0) {$x_0$};
\node [right] at (6.1,2) {$x_1$};
\node [above] at (1.5,5.1) {$x_k$};
\node [below right] at (3.33,0) {$q$};
\end{tikzpicture}
\caption{Proof of \eqref{plus-one-0-k}.}\label{plus-one-0-k-fig}
\end{figure}

Next we prove \eqref{plus-one-0-k}. 
Fix $k\in\mathbb{Z}\cap\lbrack 2,n-1\rbrack$. 
If $x_k =x_1$ or $x_k =x_n$, then \eqref{plus-one-0-k} holds clearly. 
Since $g$ satisfies the condition $(6)$, there exists a point $q\in\lbrack x_n ,x_1\rbrack\cap\lbrack x_{n-1},x_0 \rbrack$. 
In particular, \eqref{plus-one-0-k} holds whenever $x_k =x_{n-1}$. 
Since $g$ satisfies the condition $(3)$, we have 
\begin{equation}\label{plus-one-g0-0-k}
\lbrack x_0 ,x_{k}\rbrack\cap\lbrack x_{n-1},x_{1}\rbrack\neq\emptyset .
\end{equation}
If $x_k =x_0$, then $x_0 \in\lbrack x_{n-1},x_1 \rbrack$ by \eqref{plus-one-g0-0-k}, 
which implies $x_0 \in\lbrack q,x_1 \rbrack$ since $q\in\lbrack x_{n-1},x_0 \rbrack$, 
and therefore \eqref{plus-one-0-k} holds. 
So henceforth we assume that $x_k \not\in\{ x_{n-1} ,x_n ,x_0 ,x_1 \}$. 
By Proposition \ref{diagonal-angle-prop}, \eqref{plus-one-g0-0-k} implies that 
\begin{align}
\angle x_{n-1}x_{0}x_{k}+\angle x_{k}x_{0}x_{1}&=\angle x_{n-1}x_{0}x_{1},\label{plus-one-n-10k-k01}\\
\angle x_{n-1}x_{k}x_{0}+\angle x_{0}x_{k}x_{1}&=\angle x_{n-1}x_{k}x_{1}.\label{plus-one-n-1k0-0k1}
\end{align}
Since $\lbrack x_n ,x_k \rbrack\cap\lbrack x_{n-1},x_{0}\rbrack\neq\emptyset$ 
by \eqref{plus-one-n-m}, 
Proposition \ref{diagonal-angle-prop} also implies that 
\begin{align}
\angle x_{n}x_{0}x_{n-1}+\angle x_{n-1}x_{0}x_{k}&=\angle x_{n}x_{0}x_{k},\label{plus-one-n0n-1-n-10k}\\
\angle x_{n-1}x_{k}x_{n}+\angle x_{n}x_{k}x_{0}&=\angle x_{n-1}x_{k}x_{0}\label{plus-one-n-1kn-nk0}.
\end{align}
By \eqref{plus-one-10n-angle}, \eqref{plus-one-n-10k-k01} and \eqref{plus-one-n0n-1-n-10k}, we have 
\begin{align}\label{plus-one-n0k-k01}
\angle x_{n}x_{0}x_{k}+\angle x_{k}x_{0}x_{1}
&=\angle x_{n}x_{0}x_{n-1}+x_{n-1}x_{0}x_{k}+\angle x_{k}x_{0}x_{1}\\
&=\angle x_{n}x_{0}x_{n-1}+\angle x_{n-1}x_{0}x_{1}
\leq\pi .\nonumber
\end{align}
By \eqref{plus-one-n-1k0-0k1} and \eqref{plus-one-n-1kn-nk0}, we have 
\begin{equation}\label{plus-one-nk0-0k1}
\angle x_{n}x_{k}x_{0}+\angle x_{0}x_{k}x_{1}
\leq
\angle x_{n-1}x_{k}x_{0}+\angle x_{0}x_{k}x_{1}
=
\angle x_{n-1}x_{k}x_{1}
\leq\pi .
\end{equation}
Because $x_{n-1}$ and $x_{1}$ are not on the same side of $\ell (x_0 ,x_k )$ by \eqref{plus-one-g0-0-k}, 
if $x_{n}$ and $x_{1}$ were on the same side $S$ of $\ell (x_0 ,x_k )$, 
then we would have $\lbrack x_{n},x_{1}\rbrack\subseteq S$ 
and $\lbrack x_{n-1},x_0 \rbrack\subseteq M_{\kappa}^2 \setminus S$, 
contradicting the assumption that $g$ satisfies the condition $(6)$. 
Thus $x_{n}$ and $x_{1}$ are not on the same side of $\ell (x_0 ,x_k )$. 
Therefore, 
\eqref{plus-one-n0k-k01} and \eqref{plus-one-nk0-0k1} imply \eqref{plus-one-0-k} by Proposition \ref{convex-quadrilateral-prop}. 

Exactly the same argument as in the proof of \eqref{plus-one-0-k} shows that \eqref{plus-one-n-1-l} holds for every 
$l\in\mathbb{Z}\cap\lbrack 0,n-3\rbrack$, which completes the proof. 
\end{proof}

\section{Proofs of Theorem \ref{nongeodesic-majorization-th} and Theorem \ref{Cycl-th}}\label{quadruple-Cycln-sec}

In this section, we prove Theorem \ref{nongeodesic-majorization-th} and Theorem \ref{Cycl-th}. 
To prove Theorem \ref{nongeodesic-majorization-th}, we define the following conditions by slightly 
modifying the definition of the $\mathrm{Cycl}_n (\kappa )$ conditions. 

\begin{definition}\label{strongly-Cycl-def}
Fix $\kappa\in\mathbb{R}$ and a positive integer $n$. 
We say that a metric space $(X,d_X )$ is a {\em $\mathrm{Cycl}'_n(\kappa )$ space} if 
for any map $f:\mathbb{Z}/n\mathbb{Z}\to X$ that satisfies 
\begin{equation*}
\sum_{i\in\mathbb{Z}/n\mathbb{Z}}d_X \left( f(i),f(i+\lbrack 1\rbrack_n )\right)<2 D_{\kappa},\quad
f(j)\neq f(j+\lbrack 1\rbrack_n )
\end{equation*}
for every $j\in\mathbb{Z}/n\mathbb{Z}$, 
there exists a map $g:\mathbb{Z}/n\mathbb{Z}\to M_{\kappa}^2$ 
that satisfies the following two conditions: 
\begin{enumerate}
\item[$(1)$]
For any $i,j\in\mathbb{Z}/n\mathbb{Z}$, 
\begin{equation*}
d_{\kappa}(g(i),g(i+\lbrack 1\rbrack_n )) =d_X (f(i),f(i+\lbrack 1\rbrack_n )),\quad 
d_{\kappa}(g(i),g(j))\geq d_X (f(i),f(j)).
\end{equation*}
\item[$(2)$]
For any $i,j\in\mathbb{Z}/n\mathbb{Z}$ with $i\neq j$, 
$\lbrack g(i),g(j)\rbrack\cap\lbrack g(i-\lbrack 1\rbrack_n ),g(i+\lbrack 1\rbrack_n )\rbrack\neq\emptyset$. 
\end{enumerate}
We call such a map $g:\mathbb{Z}/n\mathbb{Z}\to M_{\kappa}^2$ 
that satisfies the above two conditions a {\em comparison map} of $f$. 
\end{definition}

It is easily seen that every metric space is 
$\mathrm{Cycl}'_1(\kappa )$, $\mathrm{Cycl}'_2 (\kappa )$ and $\mathrm{Cycl}'_3 (\kappa )$ for 
any $\kappa\in\mathbb{R}$. 
The $\mathrm{Cycl}'_{n}(\kappa )$ conditions and the $\mathrm{Cycl}_{n}(\kappa )$ conditions 
are related by the following lemma. 

\begin{lemma}\label{strongly-Cycl-relation-lemma}
Fix $\kappa\in\mathbb{R}$ and an integer $n\geq 4$. 
If a metric space $X$ is $\mathrm{Cycl}'_{m}(\kappa )$ for every $m\in\mathbb{Z}\cap\lbrack 1,n\rbrack$, 
then $X$ is $\mathrm{Cycl}_n (\kappa )$. 
\end{lemma}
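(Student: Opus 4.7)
The plan is to reduce the $\mathrm{Cycl}_n (\kappa )$ condition for $X$ to the $\mathrm{Cycl}'_m (\kappa )$ conditions for appropriate $m\leq n$ by collapsing cyclically consecutive equal vertices. Fix a map $f:\mathbb{Z}/n\mathbb{Z}\to X$ with $\sum_{i}d_X(f(i),f(i+\lbrack 1\rbrack_n ))<2D_{\kappa}$. If $f$ is constant, any constant map $g:\mathbb{Z}/n\mathbb{Z}\to M_{\kappa}^2$ witnesses the $\mathrm{Cycl}_n (\kappa )$ condition trivially, so we may assume $f$ is nonconstant.

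Let $J=\{ j\in\mathbb{Z}/n\mathbb{Z}: f(j)=f(j+\lbrack 1\rbrack_n )\}$ and set $m=n-|J|\geq 2$. The equivalence relation on $\mathbb{Z}/n\mathbb{Z}$ generated by $j\sim j+\lbrack 1\rbrack_n$ for $j\in J$ partitions $\mathbb{Z}/n\mathbb{Z}$ into $m$ cyclically consecutive runs on each of which $f$ is constant; I would fix a cyclic-order-preserving quotient map $\pi :\mathbb{Z}/n\mathbb{Z}\to\mathbb{Z}/m\mathbb{Z}$ collapsing each run to a single point, and define $f' :\mathbb{Z}/m\mathbb{Z}\to X$ by $f' (\pi (i))=f(i)$. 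Then $f'(\lbrack k\rbrack_m )\neq f'(\lbrack k+1\rbrack_m )$ for every $k$, and the perimeter is preserved, so $\sum_k d_X (f'(k),f'(k+\lbrack 1\rbrack_m ))<2D_{\kappa}$.

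Next I would obtain a map $g' :\mathbb{Z}/m\mathbb{Z}\to M_{\kappa}^2$ realizing the $\mathrm{Cycl}'_m (\kappa )$ comparison for $f'$. When $4\leq m\leq n$ the hypothesis applies directly; when $m\in\{ 2,3\}$ the conclusion already holds for every metric space, as noted in the paragraph after Definition \ref{strongly-Cycl-def}. In either case $g'$ satisfies $d_{\kappa}(g'(k),g'(k+\lbrack 1\rbrack_m ))=d_X (f'(k),f'(k+\lbrack 1\rbrack_m ))$ and $d_{\kappa}(g'(k),g'(l))\geq d_X (f'(k),f'(l))$ for all $k,l\in\mathbb{Z}/m\mathbb{Z}$. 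Now set $g=g'\circ\pi :\mathbb{Z}/n\mathbb{Z}\to M_{\kappa}^2$.

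Finally I would verify the $\mathrm{Cycl}_n (\kappa )$ conclusion for $g$. For each edge $(i,i+\lbrack 1\rbrack_n )$, either $\pi (i)=\pi (i+\lbrack 1\rbrack_n )$, in which case both $d_{\kappa}(g(i),g(i+\lbrack 1\rbrack_n ))$ and $d_X (f(i),f(i+\lbrack 1\rbrack_n ))$ vanish, or the images are adjacent in $\mathbb{Z}/m\mathbb{Z}$, in which case the edge-length equality for $g'$ gives $d_{\kappa}(g(i),g(i+\lbrack 1\rbrack_n ))=d_X (f(i),f(i+\lbrack 1\rbrack_n ))$; either way the required edge inequality in Definition \ref{Cycl-def} holds. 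For arbitrary indices $i,j$,
\[
d_{\kappa}(g(i),g(j))=d_{\kappa}(g'(\pi (i)),g'(\pi (j)))\geq d_X (f'(\pi (i)),f'(\pi (j)))=d_X (f(i),f(j)),
\]
using condition $(1)$ of Definition \ref{strongly-Cycl-def}, which supplies the non-contracting inequality on every pair (both sides being $0$ when $\pi (i)=\pi (j)$). The only mildly delicate step is choosing $\pi$ so that adjacencies in $\mathbb{Z}/n\mathbb{Z}$ push forward to equalities or adjacencies in $\mathbb{Z}/m\mathbb{Z}$, which is straightforward bookkeeping.
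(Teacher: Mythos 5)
Your proposal is correct and follows essentially the same route as the paper: collapse the maximal runs of cyclically consecutive equal values of $f$ to get a map $f'$ on $\mathbb{Z}/m\mathbb{Z}$ with no degenerate edges, apply $\mathrm{Cycl}'_m (\kappa )$ (trivially satisfied for $m\in\{2,3\}$), and pull the comparison map back along the quotient. The paper implements the same idea by explicitly indexing the change points $m_0<m_1<\cdots <m_N$ rather than via a quotient map, but the argument and all verifications are the same.
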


\begin{proof}
Fix $\kappa\in\mathbb{R}$ and an integer $n\geq 4$. 
Assume that a metric space $(X,d_X )$ is $\mathrm{Cycl}'_{m}(\kappa )$ for every $m\in\mathbb{Z}\cap\lbrack 1,n\rbrack$. 
To prove that $X$ is $\mathrm{Cycl}_n (\kappa )$, 
let $f:\mathbb{Z}/n\mathbb{Z}\to X$ be an arbitrary map with 
$\sum_{i\in\mathbb{Z}/n\mathbb{Z}}d_{X}(f(i),f(i+\lbrack 1\rbrack_n ))<2D_{\kappa}$. 
If $f$ is constant, then we can take a constant map as 
a map $g:\mathbb{Z}/n\mathbb{Z}\to M_{\kappa}^2$ in Definition \ref{Cycl-def}. 
So assume that $f$ is nonconstant. 
Then there exists $m_0 \in\mathbb{Z}$ with 
$f(\lbrack m_0 -1\rbrack_n )\neq f(\lbrack m_0\rbrack_n )$. 
Let $N$ be the cardinality of the set $\{i\in\mathbb{Z}/n\mathbb{Z}\hspace{1mm}|\hspace{1mm}f(i-\lbrack 1\rbrack_n )\neq f(i)\}$. 
Then there exist $N$ distinct integers $m_1 ,m_2 ,\ldots ,m_{N}\in\mathbb{Z}\cap\lbrack m_0 +1,m_0 +n\rbrack$ such that 
\begin{equation*}
m_0 <m_1 <\cdots <m_{N},\quad
f(\lbrack m_l -1\rbrack_n )\neq f(\lbrack m_l\rbrack_n )
\end{equation*}
for every $l\in\mathbb{Z}\cap\lbrack 1,N\rbrack$. 
We clearly have $1\leq N\leq n$ and $m_{N}=m_0 +n$. 
Define a map $f_0 :\mathbb{Z}/N\mathbb{Z}\to X$ by 
\begin{equation*}
f_0 (\lbrack l\rbrack_{N})=f(\lbrack m_l \rbrack_n ),\quad
l\in\mathbb{Z}\cap\lbrack 0,N-1\rbrack .
\end{equation*}
Then $\sum_{i\in\mathbb{Z}/N\mathbb{Z}}d_{\kappa}(f_{0}(i),f_{0}(i+\lbrack 1\rbrack_{N}))<2D_{\kappa}$, and 
$f_{0}(j)\neq f_{0}(j+\lbrack 1\rbrack_{N})$ for every $j\in\mathbb{Z}/N\mathbb{Z}$. 
Therefore, there exists a map $g_0 :\mathbb{Z}/N\mathbb{Z}\to M_{\kappa}^2$ such that 
\begin{equation*}
d_{\kappa}(g_0 (i),g_0 (i+\lbrack 1\rbrack_{N}))= d_X (f_0 (i),f_0 (i+\lbrack 1\rbrack_{N})),\quad
d_{\kappa}(g_0 (i),g_0 (j))\geq d_X (f_0 (i),f_0 (j))
\end{equation*}
for any $i,j\in\mathbb{Z}/N\mathbb{Z}$ 
because $X$ is $\mathrm{Cycl}'_{N}(\kappa )$. 
Define a map $g_1 :\mathbb{Z}/n\mathbb{Z}\to M_{\kappa}^2$ by setting 
$g_1 (\lbrack m\rbrack_{n}) =g_0 (\lbrack l\rbrack_{N})$ 
when $m\in\mathbb{Z}\cap\lbrack m_{l},m_{l+1})$, $l\in\mathbb{Z}\cap\lbrack 0,N-1\rbrack$. 
Then it is easily seen that 
\begin{equation*}
d_{\kappa}(g_1 (i),g_1 (i+\lbrack 1\rbrack_{n}))=d_X (f(i),f(i+\lbrack 1\rbrack_{n})),\quad
d_{\kappa}(g_1 (i),g_1 (j))\geq d_X (f(i),f(j))
\end{equation*}
for any $i,j\in\mathbb{Z}/n\mathbb{Z}$. 
Thus $X$ is $\mathrm{Cycl}_n (\kappa )$. 
\end{proof}

To prove Theorem \ref{nongeodesic-majorization-th}, 
it clearly suffices to prove that every $\mathrm{Cycl}_4 (\kappa )$ space is 
$\mathrm{Cycl}'_n (\kappa )$ for every positive integer $n$. 
The convexity condition $(2)$ in Definition \ref{strongly-Cycl-def} allows us to prove it  
by induction on $n$. 
A similar idea was used by 
Ballmann in his lecture note \cite{B} 
for proving Reshetnyak's majorization theorem.

\begin{proof}[Proof of Theorem \ref{nongeodesic-majorization-th}]
Fix $\kappa\in\mathbb{R}$. 
We will prove that every $\mathrm{Cycl}_4 (\kappa)$ space is $\mathrm{Cycl}'_n (\kappa )$ 
for every positive integer $n$ by induction on $n$. 
As we mentioned above, 
every metric space is $\mathrm{Cycl}'_1 (\kappa )$, $\mathrm{Cycl}'_2 (\kappa )$ and $\mathrm{Cycl}'_3 (\kappa )$ trivially. 
Fix an integer $n\geq 3$, and 
assume that every $\mathrm{Cycl}_4 (\kappa)$ space is $\mathrm{Cycl}'_{l} (\kappa )$ 
for every $l\in\mathbb{Z}\cap\lbrack 1,n\rbrack$. 
Let $(X,d_X )$ be an arbitrary $\mathrm{Cycl}_4 (\kappa )$ space, and let 
$f:\mathbb{Z}/(n+1)\mathbb{Z}\to X$ be a map that satisfies 
\begin{equation}\label{strongly-f-perimeter-distinct-assumption}
\sum_{i\in\mathbb{Z}/(n+1)\mathbb{Z}}d_X (f(i),f(i+\lbrack 1\rbrack_{n+1}))<2 D_{\kappa},\quad
f(j)\neq f(j+\lbrack 1\rbrack_{n+1})
\end{equation}
for every $j\in\mathbb{Z}/(n+1)\mathbb{Z}$. 
We will prove the existence of a comparison map of $f$. 
Define a map $f_0 :\mathbb{Z}/n\mathbb{Z}\to X$ by 
\begin{equation*}
f_0 (\lbrack m\rbrack_n )=f(\lbrack m\rbrack_{n+1}),\quad
m\in\mathbb{Z}\cap\lbrack 0,n-1\rbrack .
\end{equation*}
We will consider the case in which $f(\lbrack n-1\rbrack_{n+1})=f(\lbrack 0\rbrack_{n+1})$ later, 
and we first assume that $f(\lbrack n-1\rbrack_{n+1})\neq f(\lbrack 0\rbrack_{n+1})$. 
Then it follows from \eqref{strongly-f-perimeter-distinct-assumption} that we have 
\begin{equation*}
\sum_{i\in\mathbb{Z}/n\mathbb{Z}}d_X (f_0 (i),f_0 (i+\lbrack 1\rbrack_{n}))<2 D_{\kappa},\quad
f_0 (j)\neq f_0 (j+\lbrack 1\rbrack_{n})
\end{equation*}
for every $j\in\mathbb{Z}/n\mathbb{Z}$, and so 
there exists a comparison map $g_0 :\mathbb{Z}/n\mathbb{Z}\to M_{\kappa}^2$ of $f_0$ 
by the inductive hypothesis. 
Since 
\begin{align*}
d_{\kappa}(g_0 (\lbrack n-1\rbrack_n ),g_0 (\lbrack 0\rbrack_n ))=d_X (f_0 (\lbrack n-1\rbrack_n ),f_0 (\lbrack 0\rbrack_n ))
=d_X (f(\lbrack n-1\rbrack_{n+1}),f(\lbrack 0\rbrack_{n+1})) ,
\end{align*}
there exists $p\in M_{\kappa}^2$ such that 
\begin{align*}
d_{\kappa}(g_0 (\lbrack n-1\rbrack_n ),p)&=d_X (f(\lbrack n-1\rbrack_{n+1}) ,f(\lbrack n\rbrack_{n+1})), \\
d_{\kappa}(p,g_0 (\lbrack 0\rbrack_n ))&=d_X (f(\lbrack n\rbrack_{n+1}) ,f(\lbrack 0\rbrack_{n+1})).
\end{align*}
Because $g_0$ is a comparison map of $f_0$, it follows from Lemma \ref{edge-lemma} that 
for any $i,j\in\mathbb{Z}/n\mathbb{Z}$, $g_0 (i)$ and $g_0 (j)$ 
do not lie on opposite sides of $\ell (g_0 (\lbrack n-1\rbrack_n ),g_0 (\lbrack 0\rbrack_n ))$. 
So we may assume that 
$p$ is not on the same side of $\ell (g_0 (\lbrack n-1\rbrack_n ),g_0 (\lbrack 0\rbrack_n ))$ as $g_0 (i)$ for every $i\in\mathbb{Z}/n\mathbb{Z}$. 
Define a map $g:\mathbb{Z}/(n+1)\mathbb{Z}\to M_{\kappa}^2$ by 
\begin{equation*}
g (\lbrack m\rbrack_{n+1})
=
\begin{cases}
g_0 (\lbrack m\rbrack_n),\quad &\textrm{if }m\in\mathbb{Z}\cap\lbrack 0,n-1\rbrack ,\\
p,\quad &\textrm{if }m=n.
\end{cases}
\end{equation*}
Then 
\begin{align}
\label{g-mn-ineq}
d_{\kappa}\left( g(\lbrack l\rbrack_{n+1}),g(\lbrack m\rbrack_{n+1})\right)
&=
d_{\kappa}\left( g_0 (\lbrack l\rbrack_n ),g_0 (\lbrack m\rbrack_n )\right) \\
&\geq
d_X ( f_0 (\lbrack l\rbrack_n ),f_0 (\lbrack m\rbrack_n ))
=
d_X \left( f (\lbrack l\rbrack_{n+1}),f (\lbrack m\rbrack_{n+1})\right)\nonumber
\end{align}
for any $l,m\in\mathbb{Z}\cap\lbrack 0,n-1\rbrack$, and 
\begin{align}\label{g-ll+1-eq}
d_{\kappa}( g (\lbrack l\rbrack_{n+1}),&g (\lbrack l+1\rbrack_{n+1}))
=
d_{\kappa}\left( g_0 (\lbrack l\rbrack_{n}),g_0 (\lbrack l+1\rbrack_{n})\right) \\
&=
d_X (f_0 (\lbrack l\rbrack_{n}),f_0 (\lbrack l+1\rbrack_{n}))
=
d_X \left(f (\lbrack l\rbrack_{n+1}),f(\lbrack l+1\rbrack_{n+1})\right)\nonumber
\end{align}
for any  $l\in\mathbb{Z}\cap\lbrack 0,n-2\rbrack$. 
Furthermore, 
\begin{align}
\label{g-k-1k-eq}
d_{\kappa}(g(\lbrack n-1\rbrack_{n+1}),g(\lbrack n\rbrack_{n+1}))
&=
d_{\kappa}(g_0 (\lbrack n-1\rbrack_n ),p)
=
d_X (f(\lbrack n-1\rbrack_{n+1}) ,f(\lbrack n\rbrack_{n+1})),\\
\label{g-k0-eq}
d_{\kappa}(g(\lbrack n\rbrack_{n+1}),g(\lbrack 0\rbrack_{n+1}))
&=
d_{\kappa}(p,g_0 (\lbrack 0\rbrack_n ))
=
d_X (f(\lbrack n\rbrack_{n+1}) ,f(\lbrack 0\rbrack_{n+1})),\\
\label{g-k-10-eq}
d_{\kappa}(g(\lbrack n-1\rbrack_{n+1}),g(\lbrack 0\rbrack_{n+1}))
&=
d_{\kappa}(g_0 (\lbrack n-1\rbrack_{n}),g_0 (\lbrack 0\rbrack_n )) \\
=
d_X &( f_0 (\lbrack n-1\rbrack_{n}),f_0 (\lbrack 0\rbrack_n ))
=
d_X (f(\lbrack n-1\rbrack_{n+1}) ,f(\lbrack 0\rbrack_{n+1})).\nonumber
\end{align}
We consider two cases. 

\textsc{Case 1}: 
{\em The map $g$ satisfies 
\begin{align*}
\lbrack g(\lbrack n\rbrack_{n+1}),g(\lbrack n-2\rbrack_{n+1})\rbrack\cap\lbrack g(\lbrack n-1\rbrack_{n+1}),g(\lbrack 0\rbrack_{n+1})\rbrack &\neq\emptyset ,\\
\lbrack g(\lbrack n\rbrack_{n+1}),g(\lbrack 1\rbrack_{n+1})\rbrack\cap\lbrack g(\lbrack n-1\rbrack_{n+1}),g(\lbrack 0\rbrack_{n+1})\rbrack &\neq\emptyset .
\end{align*}}
In this case, we have 
\begin{equation*}
\lbrack g(i),g(j)\rbrack\cap\lbrack g(i-\lbrack 1\rbrack_{n+1}),g(i+\lbrack 1\rbrack_{n+1})\rbrack\neq\emptyset
\end{equation*}
for any $i,j\in\mathbb{Z}/(n+1)\mathbb{Z}$ with $i\neq j$ by Lemma \ref{plus-one-lemma}. 
In particular, we have 
\begin{equation*}
\lbrack g(\lbrack n-1\rbrack_{n+1}),g(\lbrack 0\rbrack_{n+1})\rbrack\cap
\lbrack g(\lbrack m\rbrack_{n+1}),g(\lbrack n\rbrack_{n+1})\rbrack\neq\emptyset
\end{equation*}
for every $m\in\mathbb{Z}\cap\lbrack 0,n-1\rbrack$. 
It easily follows from \eqref{strongly-f-perimeter-distinct-assumption}, the triangle inequality for $d_{\kappa}$, and 
the definition of $g$ that 
\begin{multline*}
d_{\kappa} (g(\lbrack 0\rbrack_{n+1}),g(\lbrack m\rbrack_{n+1}))
+d_{\kappa} (g(\lbrack m\rbrack_{n+1}),g(\lbrack n-1\rbrack_{n+1}))\\
+d_{\kappa} (g(\lbrack n-1\rbrack_{n+1}),g(\lbrack n\rbrack_{n+1}))
+d_{\kappa} (g(\lbrack n\rbrack_{n+1}),g(\lbrack 0\rbrack_{n+1}))
<2D_{\kappa}
\end{multline*}
Therefore, Corollary \ref{original-def-coro} implies 
\begin{equation}\label{strongly-Cycl-th-g-i-k+1-ineq}
d_{\kappa}(g(\lbrack m\rbrack_{n+1}),g(\lbrack n\rbrack_{n+1}))
\geq
d_X (f (\lbrack m\rbrack_{n+1}) ,f(\lbrack n\rbrack_{n+1}))
\end{equation}
for every $m\in\mathbb{Z}\cap\lbrack 0,n-1\rbrack$ because $X$ is $\mathrm{Cycl}_4 (\kappa )$ and we have 
\begin{align*}
d_X (f(\lbrack 0\rbrack_{n+1}),f(\lbrack m\rbrack_{n+1}))&\leq d_{\kappa}(g(\lbrack 0\rbrack_{n+1}),g(\lbrack m\rbrack_{n+1})),\\
d_X (f(\lbrack m\rbrack_{n+1}),f(\lbrack n-1\rbrack_{n+1}))&\leq d_{\kappa}(g(\lbrack m\rbrack_{n+1}),g(\lbrack n-1\rbrack_{n+1})),\\
d_X (f(\lbrack n-1\rbrack_{n+1}),f(\lbrack n\rbrack_{n+1}))&=d_{\kappa}(g(\lbrack n-1\rbrack_{n+1}),g(\lbrack n\rbrack_{n+1})) ,\\
d_X (f(\lbrack n\rbrack_{n+1}),f(\lbrack 0\rbrack_{n+1}))&=d_{\kappa}(g(\lbrack n\rbrack_{n+1}),g(\lbrack 0\rbrack_{n+1})),\\
d_X (f(\lbrack 0\rbrack_{n+1}),f(\lbrack n-1\rbrack_{n+1}))&=d_{\kappa}(g(\lbrack 0\rbrack_{n+1}),g(\lbrack n-1\rbrack_{n+1}))
\end{align*}
by \eqref{g-mn-ineq}, \eqref{g-k-1k-eq}, \eqref{g-k0-eq} and \eqref{g-k-10-eq}. 
By \eqref{g-mn-ineq}, \eqref{g-ll+1-eq}, \eqref{g-k-1k-eq}, \eqref{g-k0-eq} and \eqref{strongly-Cycl-th-g-i-k+1-ineq}, we have 
\begin{equation*}
d_{\kappa}(g(i),g(i+\lbrack 1\rbrack_{n+1}))=d_X (f(i),f(i+\lbrack 1\rbrack_{n+1})),\quad d_{\kappa}(g(i),g(j))\geq d_X (f(i),f(j))
\end{equation*}
for any $i,j\in\mathbb{Z}/(n+1)\mathbb{Z}$. 
Thus $g$ is a comparison map of $f$.

\textsc{Case 2}: 
{\em The map $g$ satisfies 
\begin{equation}\label{nongeodesic-majorization-th-case2-1}
\lbrack g(\lbrack n\rbrack_{n+1}),g(\lbrack n-2\rbrack_{n+1})\rbrack\cap\lbrack g(\lbrack n-1\rbrack_{n+1}),g(\lbrack 0\rbrack_{n+1})\rbrack =\emptyset
\end{equation}
or 
\begin{equation}\label{nongeodesic-majorization-th-case2-2}
\lbrack g(\lbrack n\rbrack_{n+1}),g(\lbrack 1\rbrack_{n+1})\rbrack\cap\lbrack g(\lbrack n-1\rbrack_{n+1}),g(\lbrack 0\rbrack_{n+1})\rbrack =\emptyset .
\end{equation}}
\noindent
If \eqref{nongeodesic-majorization-th-case2-1} holds, then we have 
$g(\lbrack n-2\rbrack_{n+1})\neq g(\lbrack 0\rbrack_{n+1})$, and it follows from 
Proposition \ref{convex-quadrilateral-prop} that we have 
\begin{equation}\label{strongly-cycl-th-n-2n-10-0n-1n-ineq}
\pi <
\angle g(\lbrack n-2\rbrack_{n+1})g(\lbrack n-1\rbrack_{n+1})g(\lbrack 0\rbrack_{n+1})
+
\angle g(\lbrack 0\rbrack_{n+1})g(\lbrack n-1\rbrack_{n+1})g(\lbrack n\rbrack_{n+1})
\end{equation}
or
\begin{equation}\label{strongly-cycl-th-n-20n-1-n-10n-ineq}
\pi <
\angle g(\lbrack n-2\rbrack_{n+1})g(\lbrack 0\rbrack_{n+1})g(\lbrack n-1\rbrack_{n+1})
+
\angle g(\lbrack n-1\rbrack_{n+1})g(\lbrack 0\rbrack_{n+1})g(\lbrack n\rbrack_{n+1}).
\end{equation}
Because we have 
\begin{equation*}
\angle g(\lbrack n-2\rbrack_{n+1})g(\lbrack 0\rbrack_{n+1})g(\lbrack n-1\rbrack_{n+1})\leq
\angle g(\lbrack 1\rbrack_{n+1})g(\lbrack 0\rbrack_{n+1})g(\lbrack n-1\rbrack_{n+1})
\end{equation*}
by Lemma \ref{two-angles-lemma}, 
\eqref{strongly-cycl-th-n-20n-1-n-10n-ineq} implies 
\begin{equation}\label{strongly-cycl-th-10n-1-n-10n-ineq}
\pi <
\angle g(\lbrack 1\rbrack_{n+1})g(\lbrack 0\rbrack_{n+1})g(\lbrack n-1\rbrack_{n+1})
+
\angle g(\lbrack n-1\rbrack_{n+1})g(\lbrack 0\rbrack_{n+1})g(\lbrack n\rbrack_{n+1}).
\end{equation}
In the case in which \eqref{nongeodesic-majorization-th-case2-2} holds, 
the same argument shows that we have 
\eqref{strongly-cycl-th-n-2n-10-0n-1n-ineq} or \eqref{strongly-cycl-th-10n-1-n-10n-ineq}. 
Thus we always have \eqref{strongly-cycl-th-n-2n-10-0n-1n-ineq} or \eqref{strongly-cycl-th-10n-1-n-10n-ineq} 
in \textsc{Case 2}. 
We may assume without loss of generality that we have \eqref{strongly-cycl-th-n-2n-10-0n-1n-ineq}. 
Let 
\begin{equation*}
S=\mathrm{conv}(g_0 (\mathbb{Z}/n\mathbb{Z})),\quad 
T=\mathrm{conv}(\{ g_0 (\lbrack n-1\rbrack_n ),p,g_0 (\lbrack 0\rbrack_n )\}).
\end{equation*}
Equip the subsets $S$ and $T$ of $M_{\kappa}^2$ with the induced metrics, and 
regard them as disjoint metric spaces. 
Define $(R, d_{R} )$ to be the metric space obtained by gluing $S$ and $T$ by identifying 
$\lbrack g_0 (\lbrack n-1\rbrack_n ),g_0 (\lbrack 0\rbrack_n )\rbrack\subseteq S$ with 
$\lbrack g_0 (\lbrack n-1\rbrack_n ),g_0 (\lbrack 0\rbrack_n)\rbrack\subseteq T$. 
Then $R$ is a $\mathrm{CAT}(\kappa )$ space by Reshetnyak's gluing theorem. 
We denote by $r_m$ the point in $R$ represented by $g_0 (\lbrack m\rbrack_n )\in S$ 
for each $m\in\mathbb{Z}\cap\lbrack 0,n-1\rbrack$, 
and by $r_n$ 
the point in $R$ represented by $p\in T$ 
(see \textsc{Figure} \ref{case3-fig}). 
\begin{figure}[htbp]
\centering
\begin{tikzpicture}[scale=0.6]
\draw [opacity=0.5,fill=gray] (0,3) -- (1,5) -- (3,6) -- (5,5) -- (7,0) -- (5,1.8) -- (3,0) -- (0.5,1) -- (0,3);
\draw [fill] (3,6) circle [radius=0.06];
\draw [fill] (5,5) circle [radius=0.06];
\draw [fill] (7,0) circle [radius=0.06];
\draw [fill] (5,1.8) circle [radius=0.06];
\draw [fill] (3,0) circle [radius=0.06];
\node [above] at (3.2,2.6) {$R$};
\node [above] at (3,6) {$r_1$};
\node [above] at (5.2,5) {$r_0$};
\node [right] at (7,0) {$r_n$};
\node [below] at (5.1,1.3) {$r_{n-1}$};
\node [below] at (3,0) {$r_{n-2}$};
\end{tikzpicture}
\caption{The $\mathrm{CAT}(\kappa )$ space $R$. }
\label{case3-fig}
\end{figure}
Define a map $f_1 :\mathbb{Z}/n\mathbb{Z}\to R$ by 
\begin{equation*}
f_1 (\lbrack m\rbrack_n )
=
\begin{cases}
r_m ,\quad &\textrm{if }m\in\mathbb{Z}\cap\lbrack 0,n-2\rbrack ,\\
r_{n},\quad &\textrm{if }m=n-1.
\end{cases}
\end{equation*}
Then it follows from \eqref{strongly-f-perimeter-distinct-assumption} and the definition of $f_1$ that 
\begin{equation*}
\sum_{i\in\mathbb{Z}/n\mathbb{Z}}d_{R}(f_1 (i),f_1 (i+\lbrack 1\rbrack_{n}))<2 D_{\kappa},\quad
f_1 (j)\neq f_1 (j+\lbrack 1\rbrack_{n})
\end{equation*}
for every $j\in\mathbb{Z}/n\mathbb{Z}$. 
Because $R$ is $\mathrm{Cycl}_4 (\kappa )$ by Theorem \ref{Cycl-facts-th}, 
there exists a comparison map $g_1 :\mathbb{Z}/n\mathbb{Z}\to M_{\kappa}^2$ of $f_1$ by the inductive hypothesis. 
It follows from \eqref{strongly-cycl-th-n-2n-10-0n-1n-ineq} and Proposition \ref{triangle-convexhull-coro-prop} that 
\begin{multline*}
d_{\kappa}(g(\lbrack n-2\rbrack_{n+1}),g(\lbrack n-1\rbrack_{n+1}))+d_{\kappa}(g(\lbrack n-1\rbrack_{n+1}),g(\lbrack n\rbrack_{n+1}))\\
\leq
d_{\kappa}(g(\lbrack n-2\rbrack_{n+1}),a)+d_{\kappa}(a,g(\lbrack n\rbrack_{n+1}))
\end{multline*}
for any $a\in\lbrack g(\lbrack n-1\rbrack_{n+1}),g(\lbrack 0\rbrack_{n+1})\rbrack$. 
By definition of the gluing of metric spaces, this implies 
\begin{equation}\label{strongly-Cycl-th-q-predef-eqs}
d_{\kappa}(g_1 (\lbrack n-2\rbrack_n ),g_1 (\lbrack n-1\rbrack_n ))=d_R (r_{n-2},r_{n})=d_R (r_{n-2},r_{n-1} )+d_R (r_{n-1} ,r_{n}).
\end{equation}
Therefore, there exists a point $q\in\lbrack g_1 (\lbrack n-2\rbrack_n ),g_1 (\lbrack n-1\rbrack_n )\rbrack$ such that 
\begin{equation}\label{strongly-Cycl-th-q-def-eqs}
d_{\kappa}(g_1 (\lbrack n-2\rbrack_n ),q)=d_R (r_{n-2},r_{n-1} ), \quad
d_{\kappa}(q,g_1 (\lbrack n-1\rbrack_n ))=d_R (r_{n-1} ,r_{n}). 
\end{equation}
Define a map $g_2 :\mathbb{Z}/(n+1)\mathbb{Z}\to M_{\kappa}^2$ by 
\begin{equation}\label{strongly-Cycl-th-cycl-g2-def}
g_2 (\lbrack m\rbrack_{n+1})
=
\begin{cases}
g_1 (\lbrack m\rbrack_n ) ,\quad &\textrm{if }m\in\mathbb{Z}\cap\lbrack 0,n-2\rbrack ,\\
q ,\quad &\textrm{if }m=n-1 ,\\
g_1 (\lbrack n-1\rbrack_{n}),\quad &\textrm{if }m=n.
\end{cases}
\end{equation}
Then 
\begin{align}\label{strongly-Cycl-th-g2-ij-ineq}
d_{\kappa}( g_2 (\lbrack l\rbrack_{n+1}),&g_2 (\lbrack m\rbrack_{n+1}))
=
d_{\kappa}(g_1 (\lbrack l\rbrack_{n}),g_1 (\lbrack m\rbrack_{n}))\\
\geq&
d_R (f_1 (\lbrack l\rbrack_{n}),f_1 (\lbrack m\rbrack_{n}))
=
d_R (r_l ,r_m )
=
d_{\kappa}(g_0 (\lbrack l\rbrack_{n}),g_0 (\lbrack m\rbrack_{n}))\nonumber\\
\geq&
d_X (f_0 (\lbrack l\rbrack_{n}),f_0 (\lbrack m\rbrack_{n}))
=
d_X (f(\lbrack l\rbrack_{n+1}),f(\lbrack m\rbrack_{n+1}))\nonumber
\end{align}
for any $l,m\in\mathbb{Z}\cap\lbrack 0,n-2\rbrack$, and 
\begin{align}\label{strongly-Cycl-th-g2-ij-eq}
d_{\kappa}(g_2 (\lbrack l\rbrack_{n+1}),&g_2 (\lbrack l+1\rbrack_{n+1}))
=
d_{\kappa}(g_1 (\lbrack l\rbrack_{n}),g_1 (\lbrack l+1\rbrack_{n})) \\
=&
d_R (f_1 (\lbrack l\rbrack_{n}),f_1 (\lbrack l+1\rbrack_{n}))
=
d_R (r_l ,r_{l+1} )
=
d_{\kappa}(g_0 (\lbrack l\rbrack_{n}),g_0 (\lbrack l+1\rbrack_{n})) \nonumber\\
=&
d_X (f_0 (\lbrack l\rbrack_{n}),f_0 (\lbrack l+1\rbrack_{n}))
=
d_X (f(\lbrack l\rbrack_{n+1}),f(\lbrack l+1\rbrack_{n+1}))\nonumber
\end{align}
for any $l\in\mathbb{Z}\cap\lbrack 0,n-3 \rbrack$. 
Furthermore, 
\begin{align}
\label{strongly-Cycl-th-g2-k-1-k-eq}
d_{\kappa}(g_2 (\lbrack n-2 &\rbrack_{n+1}),g_2 (\lbrack n-1\rbrack_{n+1}))
=
d_{\kappa}(g_1 (\lbrack n-2\rbrack_n ),q) \\
=&
d_R (r_{n-2} ,r_{n-1} )
=
d_{\kappa}(g_0 (\lbrack n-2\rbrack_n ),g_0 (\lbrack n-1\rbrack_n )) \nonumber\\
=&
d_X (f_0 (\lbrack n-2\rbrack_n ),f_0 (\lbrack n-1\rbrack_n ))
=
d_X (f(\lbrack n-2\rbrack_{n+1}),f(\lbrack n-1\rbrack_{n+1})), \nonumber
\end{align}
\begin{align}
\label{strongly-Cycl-th-g2-k-k+1-eq}
d_{\kappa}(g_2 (\lbrack n-1\rbrack_{n+1}),&g_2 (\lbrack n\rbrack_{n+1}))
=
d_{\kappa}(q,g_1 (\lbrack n-1\rbrack_{n}))
=
d_R (r_{n-1} ,r_{n}) \\
&=
d_{\kappa}(g_0 (\lbrack n-1\rbrack_n ),p)
=
d_X (f(\lbrack n-1\rbrack_{n+1}),f(\lbrack n\rbrack_{n+1})), \nonumber
\end{align}
\begin{align}
\label{strongly-Cycl-th-g2-k+1-1-eq}
d_{\kappa}(g_2 (\lbrack n\rbrack_{n+1}),&g_2 (\lbrack 0\rbrack_{n+1}))
=
d_{\kappa}(g_1 (\lbrack n-1\rbrack_{n}),g_1 (\lbrack 0\rbrack_{n})) \\
&=
d_R (f_1 (\lbrack n-1\rbrack_{n}),f_1 (\lbrack 0\rbrack_{n}))
=
d_R (r_{n},r_0 )\nonumber\\
&=
d_{\kappa}(p,g_0 (\lbrack 0\rbrack_{n}))
=
d_X (f(\lbrack n\rbrack_{n+1}),f(\lbrack 0\rbrack_{n+1})) .\nonumber
\end{align}
It follows from Corollary \ref{original-def-coro} that  
\begin{equation*}
d_{\kappa}(g_1 (\lbrack m\rbrack_n ),q)\geq d_R (r_m ,r_{n-1})
\end{equation*}
for every $m\in\mathbb{Z}\cap\lbrack 0,n-2\rbrack$ 
because $R$ is $\mathrm{Cycl}_4 (\kappa )$ and we have 
\begin{align*}
d_{\kappa}(g_1 (\lbrack n-1\rbrack_n ),g_1 (\lbrack m\rbrack_n ))
&\geq
d_R (f_1 (\lbrack n-1\rbrack_n ),f_1 (\lbrack m\rbrack_n))
=
d_R (r_{n},r_m ), \\
d_{\kappa}(g_1 (\lbrack m\rbrack_n ),g_1 (\lbrack n-2\rbrack_n ))
&\geq
d_R (f_1 (\lbrack m\rbrack_n ),f_1 (\lbrack n-2\rbrack_n ))
=
d_R (r_m ,r_{n-2}),\\
d_{\kappa}(g_1 (\lbrack n-2\rbrack_n ),q)
&=
d_R (r_{n-2},r_{n-1} ),\quad
d_{\kappa}(q,g_1 (\lbrack n-1\rbrack_n ))=d_R (r_{n-1} ,r_{n}),\\
d_{\kappa}(g_1 (\lbrack n-2\rbrack_n ),g_1(\lbrack n-1\rbrack_{n}))
&=
d_R (f_1 (\lbrack n-2\rbrack_n ),f_1 (\lbrack n-1\rbrack_n ))
=
d_R (r_{n-2},r_{n}),
\end{align*}
and $\lbrack g_1 (\lbrack n-2\rbrack_{n}), g_1 (\lbrack n-1\rbrack_{n})\rbrack\cap\lbrack g_1 (\lbrack m\rbrack_{n}),q\rbrack\neq\emptyset$. 
Therefore, we have 
\begin{align}\label{strongly-Cycl-th-g2-i-k-ineq}
d_{\kappa}(g_2 (\lbrack m\rbrack_{n+1}),&g_2 (\lbrack n-1\rbrack_{n+1}))
=
d_{\kappa}(g_1 (\lbrack m\rbrack_{n}),q)\\
&\geq
d_R (r_m ,r_{n-1})
=
d_{\kappa}(g_0 (\lbrack m\rbrack_{n}),g_0 (\lbrack n-1\rbrack_n )) \nonumber\\
&\geq
d_X (f_0 (\lbrack m\rbrack_{n}),f_0 (\lbrack n-1\rbrack_n ))
=
d_X (f(\lbrack m\rbrack_{n+1}),f(\lbrack n-1\rbrack_{n+1}))\nonumber
\end{align}
for every $m\in\mathbb{Z}\cap\lbrack 0,n-2\rbrack$. 
Let $S'$ and $T'$ be the images of $S$ and $T$, respectively under the natural inclusions into $R$. 
Then clearly 
\begin{equation*}
S'\cap T' =\lbrack r_0 ,r_{n-1} \rbrack ,\quad
\lbrack r_0 ,r_{n-1} \rbrack\cap\lbrack r_m, r_{n}\rbrack\neq\emptyset
\end{equation*}
for every $m\in\mathbb{Z}\cap\lbrack 0,n-1\rbrack$. 
Therefore, Corollary \ref{simplicial-coro} implies  
\begin{equation*}
d_X (f(\lbrack m\rbrack_{n+1}),f(\lbrack n\rbrack_{n+1}))
\leq
d_R (r_m ,r_{n})
\end{equation*}
for every $m\in\mathbb{Z}\cap\lbrack 0,n-1\rbrack$ because $X$ is $\mathrm{Cycl}_4 (\kappa )$ and we have 
\begin{align*}
d_R (r_0 ,r_m )
&=
d_{\kappa}(g_0 (\lbrack 0\rbrack_n ),g_0 (\lbrack m\rbrack_n ))
\geq
d_X (f_0 (\lbrack 0\rbrack_n ) ,f_0 (\lbrack m\rbrack_n ))\\
&=
d_X (f(\lbrack 0\rbrack_{n+1}),f(\lbrack m\rbrack_{n+1})), \\
d_R (r_m ,r_{n-1})
&=
d_{\kappa}(g_0 (\lbrack m\rbrack_n ),g_0 (\lbrack n-1\rbrack_n ))
\geq
d_X (f_0 (\lbrack m\rbrack_n ) ,f_0(\lbrack n-1\rbrack_n ))\\
&=
d_X (f(\lbrack m\rbrack_{n+1}),f(\lbrack n-1\rbrack_{n+1} )), \\
d_R (r_{n-1} ,r_{n})
&=
d_{\kappa}(g_0 (\lbrack n-1\rbrack_{n}),p)
=
d_X (f(\lbrack n-1\rbrack_{n+1}),f(\lbrack n\rbrack_{n+1})),\\
d_R (r_{n},r_0 )
&=
d_{\kappa}(p,g_0 (\lbrack 0\rbrack_n ))
=
d_X (f(\lbrack n\rbrack_{n+1}),f(\lbrack 0\rbrack_{n+1})),\\
d_R (r_0,r_{n-1} )
&=
d_{\kappa}(g_0 (\lbrack 0\rbrack_n ),g_0 (\lbrack n-1\rbrack_{n}))
=
d_X (f_0 (\lbrack 0\rbrack_n ),f_0 (\lbrack n-1\rbrack_{n}))\\
&=
d_X (f(\lbrack 0\rbrack_{n+1}),f(\lbrack n-1\rbrack_{n+1})).
\end{align*}
Therefore, we have 
\begin{align}\label{strongly-Cycl-th-g2-i-k+1-ineq}
d_{\kappa}(g_2 (\lbrack m\rbrack_{n+1}),g_2 (\lbrack n\rbrack_{n+1}))
&=
d_{\kappa}(g_1 (\lbrack m\rbrack_{n}),g_1 (\lbrack n-1\rbrack_{n}))\\
&\geq
d_R (f_1 (\lbrack m\rbrack_{n}),f_1 (\lbrack n-1\rbrack_{n})) \nonumber\\
&=
d_R (r_m ,r_{n})
\geq
d_X (f(\lbrack m\rbrack_{n+1}),f(\lbrack n\rbrack_{n+1}))\nonumber
\end{align}
for every $m\in\mathbb{Z}\cap\lbrack 0,n-2\rbrack$. 
It follows form \eqref{strongly-Cycl-th-g2-ij-ineq}, \eqref{strongly-Cycl-th-g2-ij-eq}, 
\eqref{strongly-Cycl-th-g2-k-1-k-eq}, \eqref{strongly-Cycl-th-g2-k-k+1-eq}, \eqref{strongly-Cycl-th-g2-k+1-1-eq}, 
\eqref{strongly-Cycl-th-g2-i-k-ineq} and \eqref{strongly-Cycl-th-g2-i-k+1-ineq} that 
\begin{equation*}
d_{\kappa}(g_2 (i),g_2 (i+\lbrack 1\rbrack_{n+1}))=d_X (f(i),f(i+\lbrack 1\rbrack_{n+1})),\quad
d_{\kappa}(g_2 (i),g_2 (j))\geq d_X (f(i),f(j))
\end{equation*}
for any $i,j\in\mathbb{Z}/(n+1)\mathbb{Z}$. 
Since $g_2 (\lbrack n-1\rbrack_{n+1})\in\lbrack g_2 (\lbrack n-2\rbrack_{n+1}),g_2 (\lbrack n\rbrack_{n+1})\rbrack$ by definition of $g_2$, 
we clearly have 
\begin{align*}
\lbrack g_2 (\lbrack n-1\rbrack_{n+1}),g_2 (\lbrack 0\rbrack_{n+1})\rbrack\cap\lbrack g_2 (\lbrack n-2\rbrack_{n+1}),g_2 (\lbrack n\rbrack_{n+1})\rbrack
&\neq\emptyset ,\\
\lbrack g_2 (\lbrack n-1\rbrack_{n+1}),g_2 (\lbrack n-3\rbrack_{n+1})\rbrack\cap\lbrack g_2 (\lbrack n-2\rbrack_{n+1}),g_2 (\lbrack n\rbrack_{n+1})\rbrack
&\neq\emptyset .
\end{align*}
Therefore, it follows from the definition of $g_2$ and Lemma \ref{plus-one-lemma} that 
\begin{equation*}
\lbrack g_2 (i),g_2 (j)\rbrack\cap\lbrack g_2 (i-\lbrack 1\rbrack_{n+1} ),g_2 (i+\lbrack 1\rbrack_{n+1})\rbrack\neq\emptyset
\end{equation*}
for any $i,j\in\mathbb{Z}/(n+1)\mathbb{Z}$ with $i\neq j$. 
Therefore, $g_2$ is a comparison map of $f$. 

We have proved the existence of a comparison map of $f$ under the assumption that 
$f(\lbrack n-1\rbrack_{n+1})\neq f(\lbrack 0\rbrack_{n+1})$. 
So assume henceforth that $f(\lbrack n-1\rbrack_{n+1})=f(\lbrack 0\rbrack_{n+1})$, and set 
\begin{equation*}
d
=
d_{X}(f(\lbrack n-1\rbrack_{n+1}),f(\lbrack n\rbrack_{n+1}))
=
d_{X}(f(\lbrack 0\rbrack_{n+1}),f(\lbrack n\rbrack_{n+1})).
\end{equation*}
Define a map $\tilde{f}_0 :\mathbb{Z}/(n-1)\mathbb{Z}\to X$ by 
$\tilde{f}_0 (\lbrack m\rbrack_{n-1})=f(\lbrack m\rbrack_{n+1})$, $m\in\mathbb{Z}\cap\lbrack 0,n-2\rbrack$. 
Then we have 
\begin{equation*}
\sum_{i\in\mathbb{Z}/(n-1)\mathbb{Z}}d_X (\tilde{f}_0 (i),\tilde{f}_0 (i+\lbrack 1\rbrack_{n-1}))<2 D_{\kappa},\quad
\tilde{f}_0 (j)\neq\tilde{f}_0 (j+\lbrack 1\rbrack_{n-1})
\end{equation*}
for every $j\in\mathbb{Z}/(n-1)\mathbb{Z}$ by \eqref{strongly-f-perimeter-distinct-assumption}. 
Therefore, by the inductive hypothesis, 
there exists a comparison map $\tilde{g}_0 :\mathbb{Z}/(n-1)\mathbb{Z}\to M_{\kappa}^2$ of $\tilde{f}_0$. 
Let 
\begin{equation*}
\tilde{S}=\mathrm{conv}(\tilde{g}_0 (\mathbb{Z}/(n-1)\mathbb{Z})),\quad
\tilde{T}=\lbrack 0, d\rbrack .
\end{equation*}
Equip $\tilde{S}\subseteq M_{\kappa}^2$ and $\tilde{T}\subseteq\mathbb{R}$ the induced metrics, 
and regard them as metric spaces in their own right. 
Define $(\tilde{R},d_{\tilde{R}})$ to be the metric space obtained by gluing $\tilde{S}$ and $\tilde{T}$ by identifying 
$\{\tilde{g}_0 (\lbrack 0\rbrack_{n-1})\}\subseteq\tilde{S}$ with $\{ 0\}\subseteq\tilde{T}$. 
Then $\tilde{R}$ is a $\mathrm{CAT}(\kappa )$ space by Reshetnyak's gluing theorem. 
We denote by $\tilde{r}_m$ the point in $\tilde{R}$ represented by $\tilde{g}_0 (\lbrack m\rbrack_{n-1})\in\tilde{S}$ for 
each $m\in\mathbb{Z}\cap\lbrack 0,n-2\rbrack$, 
by $\tilde{r}_{n-1}$ the point in $\tilde{R}$ represented by $\tilde{g}_0 (\lbrack 0\rbrack_{n-1})\in\tilde{S}$, 
and by $\tilde{r}_n$ the point in $\tilde{R}$ represented by $d\in\tilde{T}$. 
In particular, we have $\tilde{r}_0 =\tilde{r}_{n-1}$. 
Define a map $\tilde{f}_1 :\mathbb{Z}/n\mathbb{Z}\to R$ by 
\begin{equation*}
\tilde{f}_1 (\lbrack m\rbrack_n )
=
\begin{cases}
\tilde{r}_m ,\quad &\textrm{if }m\in\mathbb{Z}\cap\lbrack 0,n-2\rbrack ,\\
\tilde{r}_{n},\quad &\textrm{if }m=n-1.
\end{cases}
\end{equation*}
Then it follows from \eqref{strongly-f-perimeter-distinct-assumption} and the definition of $\tilde{f}_1$ that 
\begin{equation*}
\sum_{i\in\mathbb{Z}/n\mathbb{Z}}d_{\tilde{R}}(\tilde{f}_1 (i),\tilde{f}_1 (i+\lbrack 1\rbrack_{n}))<2 D_{\kappa},\quad
\tilde{f}_1 (j)\neq\tilde{f}_1 (j+\lbrack 1\rbrack_{n})
\end{equation*}
for every $j\in\mathbb{Z}/n\mathbb{Z}$. 
Because $\tilde{R}$ is $\mathrm{Cycl}_4 (\kappa )$ by Theorem \ref{Cycl-facts-th}, 
there exists a comparison map $\tilde{g}_1 :\mathbb{Z}/n\mathbb{Z}\to M_{\kappa}^2$ of $\tilde{f}_1$ by the inductive hypothesis. 
By definition of the gluing of metric spaces, we have 
\begin{equation*}
d_{\kappa}(\tilde{g}_1 (\lbrack n-2\rbrack_n ),\tilde{g}_1 (\lbrack n-1\rbrack_n ))
=
d_{\tilde{R}}(\tilde{r}_{n-2},\tilde{r}_{n})=d_{\tilde{R}}(\tilde{r}_{n-2},\tilde{r}_{n-1} )+d_{\tilde{R}}(\tilde{r}_{n-1} ,\tilde{r}_{n}).
\end{equation*}
Therefore, there exists a point $\tilde{q}\in\lbrack \tilde{g}_1 (\lbrack n-2\rbrack_n ),\tilde{g}_1 (\lbrack n-1\rbrack_n )\rbrack$ such that 
\begin{equation*}
d_{\kappa}(\tilde{g}_1 (\lbrack n-2\rbrack_n ),\tilde{q})=d_{\tilde{R}}(\tilde{r}_{n-2},\tilde{r}_{n-1} ), \quad
d_{\kappa}(\tilde{q},\tilde{g}_1 (\lbrack n-1\rbrack_n ))=d_{\tilde{R}}(\tilde{r}_{n-1} ,\tilde{r}_{n}). 
\end{equation*}
Define a map $\tilde{g}_2 :\mathbb{Z}/(n+1)\mathbb{Z}\to M_{\kappa}^2$ by 
\begin{equation*}
\tilde{g}_2 (\lbrack m\rbrack_{n+1})
=
\begin{cases}
\tilde{g}_1 (\lbrack m\rbrack_n ) ,\quad &\textrm{if }m\in\mathbb{Z}\cap\lbrack 0,n-2\rbrack ,\\
\tilde{q},\quad &\textrm{if }m=n-1 ,\\
\tilde{g}_1 (\lbrack n-1\rbrack_{n}),\quad &\textrm{if }m=n.
\end{cases}
\end{equation*}
Then the same argument as in \textsc{Case 2} shows that 
$\tilde{g}_2$ is a comparison map of $f$, which completes the proof. 
\end{proof}

Theorem \ref{Cycl-th} follows from Theorem \ref{nongeodesic-majorization-th} and Lemma \ref{strongly-Cycl-relation-lemma}. 

\begin{proof}[Proof of Theorem \ref{Cycl-th}]
Let $X$ be a $\mathrm{Cycl}_4 (\kappa )$ space. 
Then $X$ is $\mathrm{Cycl}'_m (\kappa)$ for every positive integer $m$ as shown in the proof of Theorem \ref{nongeodesic-majorization-th}. 
Thus $X$ is $\mathrm{Cycl}_n (\kappa )$ for every integer $n\geq 4$ by Lemma \ref{strongly-Cycl-relation-lemma}. 
\end{proof}

\section{Proof of Theorem \ref{Cycl4-boxtimes-th}}\label{boxtimes-sec}

In this section, we present a proof of Theorem \ref{Cycl4-boxtimes-th} for completeness (see Remark \ref{proof-of-Cycl4-boxtimes-remark}). 
First, we recall the following fact, which was established by Sturm 
when he proved in \cite[Theorem 4.9]{St} that 
if a geodesic metric space satisfies the $\boxtimes$-inequalities, then it is $\mathrm{CAT}(0)$.

\begin{proposition}\label{sdi-CAT(0)-prop}
Let $(X, d_X )$ be a metric space that satisfies the $\boxtimes$-inequalities. 
Suppose $x,y,z\in X$ are points such that 
$x\neq z$, and 
\begin{equation}\label{sdi-CAT(0)-prop-triangle-equality-eq}
d_X (x,z)=d_X (x,y)+d_X (y,z). 
\end{equation}
Set $t=d_X (x,y)/d_X (x,z)$. 
Then we have 
\begin{equation*}
d_X (y,w) ^2 \le 
 (1 - t) d_X (x,w) ^2 + t d_X (z,w) ^2 - t(1 - t) d_X (x,z)^2 .
\end{equation*}
for any $w\in X$. 
\end{proposition}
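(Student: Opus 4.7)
The plan is to apply the $\boxtimes$-inequality with the first parameter chosen to be the same $t = d_X(x,y)/d_X(x,z)$ as in the statement, and then let the second parameter $s$ tend to $0$. The hypothesis $d_X(x,z) = d_X(x,y) + d_X(y,z)$ is precisely what makes the three $(1-t)$-$t$ terms collapse in a convenient way.

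First, I would handle the degenerate cases $t \in \{0,1\}$ separately: if $t = 0$ then $x = y$ and the inequality reduces to $d_X(x,w)^2 \leq d_X(x,w)^2$, and similarly if $t = 1$. So assume $0 < t < 1$. From the triangle-equality hypothesis one has $1-t = d_X(y,z)/d_X(x,z)$, and a one-line computation gives the key identity
\[
(1-t) d_X(x,y)^2 + t d_X(y,z)^2 = \frac{d_X(x,y) d_X(y,z)\bigl(d_X(x,y)+d_X(y,z)\bigr)}{d_X(x,z)} = d_X(x,y) d_X(y,z) = t(1-t) d_X(x,z)^2.
\]

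Next, for any $s \in (0,1)$, I would substitute this identity into the $\boxtimes$-inequality. The first, second and fifth terms of the $\boxtimes$-expression collapse:
\[
(1-t)(1-s) d_X(x,y)^2 + t(1-s) d_X(y,z)^2 - t(1-t) d_X(x,z)^2 = -s\, t(1-t) d_X(x,z)^2.
\]
The $\boxtimes$-inequality therefore becomes
\[
s(1-s) d_X(y,w)^2 \leq s \left( -t(1-t) d_X(x,z)^2 + t\, d_X(z,w)^2 + (1-t)\, d_X(w,x)^2 \right).
\]
Dividing by $s > 0$ and letting $s \to 0^+$ yields precisely the desired inequality.

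There is no real obstacle here; the whole proof is an algebraic simplification, and the only thing to notice is that the choice $t = d_X(x,y)/d_X(x,z)$ together with the triangle equality makes the $s$-independent part of the $\boxtimes$-expression (in the variables $x,y,z$) vanish, so that after dividing by $s$ the limit $s \to 0$ gives a clean estimate.
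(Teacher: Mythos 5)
Your proof is correct and follows essentially the same route as the paper: the identity $(1-t)d_X(x,y)^2+td_X(y,z)^2=t(1-t)d_X(x,z)^2$ derived from the triangle equality, substitution into the $\boxtimes$-inequality, division by $s$, and the limit $s\to 0$. (The separate treatment of $t\in\{0,1\}$ is harmless but unnecessary, since the identity and the limiting argument go through for all $t\in\lbrack 0,1\rbrack$.)
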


\begin{proof}
By the hypothesis \eqref{sdi-CAT(0)-prop-triangle-equality-eq}, we compute 
\begin{align*}
(1-t)d_X (x,y)^2 +td_X (y,z)^2
&=
\frac{d_X (y,z)}{d_X (x,z)}d_X (x,y)^2+\frac{d_X (x,y)}{d_X (x,z)}d_X (y,z)^2 \\
&=
\frac{d_X (x,y)d_X (y,z)}{d_X (x,z)}\left(d_X (x,y)+d_X (y,z)\right) \\
&=
d_X (x,y)d_X (y,z) \\
&=
t(1-t)d_X (x,z)^2 .
\end{align*}
Combining this with the $\boxtimes$-inequality in $X$ yields 
\begin{align*}
0
&\leq
(1-t)(1-s) d_X (x,y)^2 + t(1-s) d_X (y,z)^2 + ts d_X (z,w)^2 \\
&\hspace{4mm}
+s(1-t) d_X (w,x)^2- t(1-t) d_X (x,z)^2  -s(1-s) d_X (y,w)^2 \\
&=
(1-s)\left( (1-t) d_X (x,y)^2 + td_X (y,z)^2 \right) + ts d_X (z,w)^2 \\
&\hspace{4mm}
+s(1-t) d_X (w,x)^2- t(1-t) d_X (x,z)^2  -s(1-s) d_X (y,w)^2 \\
&=
(1-s)t(1-t)d_X (x,z)^2 + ts d_X (z,w)^2 \\
&\hspace{4mm}
+s(1-t) d_X (w,x)^2- t(1-t) d_X (x,z)^2  -s(1-s) d_X (y,w)^2 \\
&=
ts d_X (z,w)^2 +s(1-t) d_X (w,x)^2
-st(1-t) d_X (x,z)^2  -s(1-s) d_X (y,w)^2 
\end{align*}
for every $s\in\lbrack 0,1\rbrack$. 
For any $s\in (0,1\rbrack$, dividing this by $s$, we obtain 
\begin{equation*}
(1-s) d_X (y,w)^2
\leq
t d_X (z,w)^2 +(1-t) d_X (w,x)^2 
-t(1-t) d_X (x,z)^2 .
\end{equation*}
Letting $s\to 0$ in this inequality 
yields the desired inequality. 
\end{proof}

We now prove Theorem \ref{Cycl4-boxtimes-th}. 

\begin{proof}[Proof of Theorem \ref{Cycl4-boxtimes-th}]
First, assume that a metric space $(X,d_X )$ is $\mathrm{Cycl}_4(0)$. 
Then for any $x,y,z,w \in X$, 
there exist $x',y',z',w' \in\mathbb{R}^2$ such that 
\begin{align*}
&\|x'-y' \|\leq d_X (x,y),\quad
\|y'-z' \|\leq d_X (y,z),\quad
\|z'-w'\|\leq d_X (z,w),\\
&\|w'-x'\|\leq d_X (w,x),\quad
\|x'-z' \|\geq d_X (x,z),\quad
\|y'-w'\|\geq d_X (y,w). 
\end{align*}
Therefore, for any $s,t\in\lbrack 0,1\rbrack$, we have 
\begin{align*}
    & (1-t)(1-s) d_X (x,y)^2 + t(1-s) d_X (y,z)^2 
            + ts d_X (z,w)^2 + (1-t)s d_X (w,x)^2  \\
	&	- t(1-t) d_X (x,z)^2 - s(1-s) d(y,w)^2  \\
 \geq & (1-t)(1-s) \|x'-y'\|^2 + t(1-s) \|y'-z'\|^2 
             + ts \|z'-w'\|^2 + (1-t)s \|w'-x'\|^2  \\
	& 	 - t(1-t) \|x'-z'\|^2 - s(1-s) \|y'-w'\|^2  \\
	= &  \|((1-t)x' + t z') - ((1-s)y' + s w') \|^2 \geq 0, 
\end{align*}
which means that $X$ satisfies the $\boxtimes$-inequalities.  

For the converse, assume that $(X,d_X )$ satisfies the $\boxtimes$-inequalities. 
Fix $x,y,z,w\in X$. 
If $x$, $y$, $z$ and $w$ are not distinct, then we can embed $\{ x,y,z,w\}$ isometrically into $\mathbb{R}^2$. 
So we assume that $x$, $y$, $z$ and $w$ are distinct. 
Then there exist $x' ,y',z' ,w'\in\mathbb{R}^2$ such that 
\begin{align*}
&\| x'-y'\| =d_X (x,y),\quad \| y'-z'\| =d_X (y,z),\quad \| z'-w'\| =d_X (z,x),\\
&\| x'-w'\| =d_X (x,w),\quad \| w'-z'\|=d_X (w,z), 
\end{align*}
and $y'$ and $w'$ do not lie on the same side of $\ell (x', z')$. 
We consider three cases. 

\textsc{Case 1}: 
{\em $\lbrack x' ,z' \rbrack\cap (y' ,w' )\neq\emptyset$.} 
In this case, 
there exist $s\in (0, 1)$ and $t\in\lbrack 0,1\rbrack$ such that 
\begin{equation*}
(1-t)x' + tz'
=
(1-s)y' + sw'.
\end{equation*}
It follows that 
\begin{align*}
  0= & \left\|\left( (1-t)x' +t z' \right) -
       \left( (1-s)y' +s w'\right)\right\|^2 \\
	=& (1-t)(1-s)\| x' -y' \|^2
	  + t(1-s)   \| y' -z' \|^2
	  + ts       \| z' -w'\|^2
	  + (1-t)s   \| w' -x' \|^2 \\
	& - t(1-t)   \| x' -z'\|^2
	  - s(1-s)   \| y' -w'\|^2 \\
	=& (1-t)(1-s) d_X (x,y)^2
	  + t(1-s)     d_X (y,z)^2
	  + ts         d_X (z,w)^2
	  + (1-t)s     d_X (w,x)^2 \\
	& - t(1-t)     d_X (x,z)^2
	  - s(1-s) \| y' -w'\|^2 .
\end{align*}
On the other hand, we have 
\begin{align*}
0\leq  &(1-t)(1-s) d_X (x,y)^2
   + t(1-s)   d_X (y,z)^2
   + ts       d_X (z,w)^2
   + (1-t)s   d_X (w,x)^2 \\
   - & t(1-t) d_X (x,z)^2
   - s(1-s)   d_X (y,w)^2
\end{align*}
because $X$ satisfies the $\boxtimes$-inequalities. 
Comparing these yields 
$d_X (y,w)\leq\| y'-w'\|$.

\textsc{Case 2}: 
{\em $\lbrack x' ,z' \rbrack\cap\{y' ,w' \}\neq\emptyset$.}  
In this case, 
we may assume without loss of generality that $y'\in\lbrack x' ,z'\rbrack$. 
Then we can write 
\begin{equation*}
y'=(1-c)x' + cz' ,
\end{equation*}
where 
\begin{equation}\label{quadrangle-lemma-k-eq}
c=\frac{\| x'-y'\|}{\| x'-z'\|}=\frac{d_X (x,y)}{d_X (x,z)}\in (0,1).
\end{equation} 
It follows that 
\begin{align*}
\label{quadrangle-lemma-compare-eq1}
\| y' -w'\|^2
&=\|(1-c)x' +cz' -w' \|^2 \\
&=(1-c)\|x' -w'\|^2 +c\|z' -w' \|^2 -c(1-c)\|x' -z' \|^2 \\
&= (1-c)d_X (x,w)^2 +cd_X (z,w)^2 -c(1-c)d_X (x,z)^2 .
\end{align*}
On the other hand, it follows from 
\eqref{quadrangle-lemma-k-eq} and Proposition \ref{sdi-CAT(0)-prop} that 
\begin{equation*}\label{quadrangle-lemma-compare-ineq1}
d_X (y,w)^2
\leq 
(1 - c) d_X (x,w)^2 + c d_X (z,w)^2 - c(1-c) d_X (x,z)^2
\end{equation*}
because we have 
\begin{equation*}
d_X (x,z)=\| x'-z' \| =\| x'-y' \| +\| y'-z' \|
=d_X (x,y)+d_X (y,z). 
\end{equation*}
Combining these yields 
$d_X (y,w)\leq\| y'-w'\|$.

\textsc{Case 3}: 
{\em $\lbrack x' ,z' \rbrack\cap\lbrack y' ,w' \rbrack=\emptyset$.} 
In this case, it follows from Proposition \ref{convex-quadrilateral-prop} that we have 
$\angle y'z'x'+\angle x'z'w' >\pi$ or $\angle y'x'z'+\angle z'x'w' >\pi$. 
We may assume without loss of generality that $\angle y'z'x'+\angle x'z'w' >\pi$. 
Then we have 
\begin{equation*}
d_X (y,z)+d_X (z,w)=\|y' -z' \|+\| z'-w' \|\leq\|y' -x' \|+\| x'-w' \| =d_X (y,x)+d_X (x,w)
\end{equation*}
by Proposition \ref{triangle-convexhull-prop}. 
Therefore, there exist $\tilde{x},\tilde{y},\tilde{w}\in\mathbb{R}^2$ such that 
\begin{equation*}
\|\tilde{y}-\tilde{w}\|=d_X (y,z)+d_X (z,w),\quad
\|\tilde{w}-\tilde{x}\|=d_X (w,x),\quad
\|\tilde{x}-\tilde{y}\|=d_X (x,y).
\end{equation*}
Then we clearly have $d_X (y,w)\leq\|\tilde{y}-\tilde{w}\|$, and we can take 
$\tilde{z}\in\lbrack\tilde{y},\tilde{w}\rbrack$ such that 
\begin{equation*}
\|\tilde{y}-\tilde{z}\|=d_X (y,z),\quad
\|\tilde{z}-\tilde{w}\|=d_X (z,w).
\end{equation*}
Because we have 
\begin{equation*}
\| y' -w' \|
\leq\| y' -z' \|+\| z' -w' \| =d_X (y,z)+d_X (z,w) 
=\|\tilde{y}-\tilde{w}\| ,
\end{equation*}
it follows from Lemma \ref{larger-larger-lemma} that 
\begin{equation*}
d_X (x,z)=\| x' -z' \|\leq\|\tilde{x}-\tilde{z}\| .
\end{equation*}
Thus the points $\tilde{x},\tilde{y},\tilde{z},\tilde{w}\in\mathbb{R}^2$ have the desired property.

The above three cases exhaust all possibilities, and thus 
$X$ is $\mathrm{Cycl}_4 (0)$. 
\end{proof}

\begin{remark}\label{proof-of-Cycl4-boxtimes-remark}
A proof of Theorem \ref{Cycl4-boxtimes-th} can also be found in \cite[Lemma 2.6]{KTU}. 
However, the case corresponding to \textsc{Case 2} in the above proof is omitted in the proof in \cite{KTU}. 
\end{remark}

\medskip
\begin{acknowledgement}
The author would like to thank Takefumi Kondo, Yu Kitabeppu and Toshimasa Kobayashi for 
helpful discussions and a number of valuable comments on the first version of this paper. 
The author also would like to thank Masato Mimura for helpful discussions, especially for noting that it follows from the result of \cite{EMN} 
that the $\mathrm{Cycl}_4 (0)$ condition do not imply the coarse embeddability into a $\mathrm{CAT}(0)$ space.
\end{acknowledgement}

\end{document}